\definecolor{darkred}{RGB}{100,0,0}
\definecolor{darkgreen}{RGB}{0,100,0}
\definecolor{darkblue}{RGB}{0,0,150}
\newtheorem{thm}{Theorem}
\newtheorem{prp}{Proposition}
\newtheorem{lem}{Lemma}
\newtheorem{cor}{Corollary}
\theoremstyle{remark}
\newtheorem{rem}{Remark}
\def\beq{\begin{equation}} 
\def\eeq{\end{equation}}
\def\beqn{\begin{eqnarray*}}
\def\eeqn{\end{eqnarray*}}
\def\Bitem{\begin{itemize}\setlength{\itemsep}{.2in}}
\def\bitem{\begin{itemize}\setlength{\itemsep}{.05in}}
\def\eitem{\end{itemize}}
\def\Benum{\begin{enumerate}\setlength{\itemsep}{.2in}}
\def\benum{\begin{enumerate}\setlength{\itemsep}{.05in}}
\def\eenum{\end{enumerate}}
\def\bmult{\begin{multline*}}
\def\emult{\end{multline*}}
\def\bcenter{\begin{center}}
\def\ecenter{\end{center}}
\def\bframe{\begin{frame}}
\def\eframe{\end{frame}}
\def\bsplit{\begin{equation*}\begin{split}}
\def\esplit{\end{split}\end{equation*}}
\newcommand{\thmref}[1]{Theorem~\ref{thm:#1}}
\newcommand{\prpref}[1]{Proposition~\ref{prp:#1}}
\newcommand{\lemref}[1]{Lemma~\ref{lem:#1}}
\newcommand{\secref}[1]{Section~\ref{sec:#1}}
\DeclareMathOperator*{\argmin}{arg\, min}
\DeclareMathOperator*{\esssup}{ess\, sup} 
\DeclareMathOperator*{\essinf}{ess\, inf} 
\DeclareMathOperator{\diam}{diam}
\DeclareMathOperator{\tr}{tr}
\DeclareMathOperator{\vol}{Vol}
\def\cC{\mathcal{C}}
\def\cP{\mathcal{P}}
\def\cS{\mathcal{S}}
\def\cX{\mathcal{X}}
\def\bbN{\mathbb{N}}
\def\bbR{\mathbb{R}}
\renewcommand{\P}{\operatorname{\mathbb{P}}}
\def\Bin{\text{Bin}}
\def\eps{\varepsilon}
\def\implies{\ \Rightarrow \ }
\def\comp{\mathsf{c}}
\def\aff{{\rm Aff}}
\def\neigh{{\rm Neigh}}
\definecolor{purple}{rgb}{0.4,.1,.9}
\begin{document}
\thispagestyle{empty}

\title{Some Theory for Ordinal Embedding}
\author{
Ery Arias-Castro\footnote{Department of Mathematics, University of California, San Diego, USA}%
}
\date{}
\maketitle

\begin{abstract}\noindent
Motivated by recent work on ordinal embedding \citep{klein}, we derive large sample consistency results and rates of convergence for the problem of embedding points based on triple or quadruple distance comparisons.  We also consider a variant of this problem where only local comparisons are provided.  Finally, inspired by \citep{jamieson2011low}, we bound the number of such comparisons needed to achieve consistency.

\medskip\noindent
{\em Keywords:} ordinal embedding, non-metric multidimensional scaling (MDS), dissimilarity comparisons, landmark multidimensional scaling.
\end{abstract}

\section{Introduction} \label{sec:intro}

The problem of ordinal embedding, also called non-metric multidimensional scaling \citep{borg2005modern}, consists of finding an embedding of a set of items based on pairwise distance comparisons.  Specifically, suppose that $\delta_{ij} \ge 0$ is some dissimilarity measure between items $i, j \in [n] := \{1, \dots, n\}$.  We assume that $\delta_{ii} = 0$ and $\delta_{ij} = \delta_{ji}$ for all $i, j \in [n]$. 
These dissimilarities are either directly available but assumed to lack meaning except for their relative magnitudes, or only available via comparisons with some other dissimilarities, meaning that we are only provided with a subset $\cC \subset [n]^4$ such that   
\beq \label{C}
\delta_{ij} < \delta_{k\ell}, \quad \forall (i,j,k,\ell) \in \cC.
\eeq
Note that the latter setting encompasses the former.
Given $\cC$ and a dimension $d$, the goal is to embed the items as points $p_1, \dots, p_n \in \bbR^d$ in a way that is compatible with the available information, specifically
\beq \label{embed}
\delta_{ij} < \delta_{k\ell} \implies \|p_i - p_j\| \le \|p_k - p_\ell\|, \quad \forall (i,j,k,\ell) \in \cC,
\eeq
where $\|\cdot\|$ denotes the Euclidean norm.
The two most common situations are when all the quadruple comparisons are available, meaning $\cC = [n]^4$, or all triple comparisons are available, meaning $\cC = \{(i,j,i,k): i,j,k \in [n]\}$, which can be identified with $[n]^3$.
This problem has a long history surveyed in \citep{young1987multidimensional}, with pioneering contributions from  \cite{MR0140376,MR0173342} and \cite{MR0169712}. 

The main question we tackle here is that  of {\em consistency}.  Suppose that the items are in fact points $x_1, \dots, x_n \in \bbR^d$ and $\delta_{ij} = \|x_i - x_j\|$.  (When the $\delta_{ij}$'s are available, suppose that $\delta_{ij} = g(\|x_i - x_j\|)$ where $g$ is an unknown increasing function.)  Provided with a subset $\cC = \cC_n$ of dissimilarity comparisons as in \eqref{embed}, is it possible to reconstruct the original points in the large-sample limit $n \to \infty$?
Clearly, the reconstruction can only be up to a similarity transformation
--- that is, a transformation $f : \bbR^d \mapsto \bbR^d$ such that, for some $\lambda > 0$, $\|f(x) - f(y)\| = \lambda \|x - y\|$ for all $x,y \in \bbR^d$, or equivalently, of the form $f(x) = \lambda R(x) + b$ where $R$ is an orthogonal transformation and $b$ is a constant vector --- since such a transformation leaves the distance comparisons unchanged.  
This question is at the foundation of non-metric multidimensional scaling.  

Early work only addressed the continuous case, where the $x$'s span a whole convex subset $U \subset \bbR^d$.  In that setting, the goal becomes to characterize {\em isotonic functions} on $U$, that is, functions $f : U \mapsto \bbR^d$ satisfying
\beq \label{isotonic}
\|x - y\| < \|x' - y'\| \implies \|f(x) - f(y)\| \le \|f(x') - f(y')\|, \quad \forall x,y,x',y' \in U.
\eeq 
\cite{shepard1966metric} argues that such functions must be similarities, and cites earlier work \citep{suppes1955axiomatization,aumann1958coefficients} dealing with the case $d=1$.

Only recently has the finite sample case been formally considered. 
Indeed, \cite{klein} prove a consistency result, showing that if $x_1, \dots, x_n \in U \subset \bbR^d$, where $U$ is a bounded, connected, and open subset of $\bbR^d$ satisfying some additional conditions --- for example, a finite union of open balls --- and $\cC = [n]^4$, then in the large sample limit with $x_1, \dots, x_n$ becoming dense in $U$, it is possible to recover the $x$'s up to a similarity transformation.  
(Note that $U$ is then uniquely defined as the interior of $\overline{\{x_i : i \ge 1\}}$.)
We note that \cite{klein} focus on the strictly isotonic case, where the second inequality in \eqref{isotonic} is strict.
Our first contribution is an extension of this consistency result for quadruple learning to triple learning where $\cC = [n]^3$.
In the process, we greatly simplify the arguments of \cite{klein} and weaken the conditions on the sampling domain $U$.
We note that \cite{terada2014local} have partially solved this problem by a reduction to the problem of embedding a nearest-neighbor graph.  However, their arguments are based on an apparently incomplete proof in \citep{von2013density}, which is itself based on a rather sophisticated approach.  Our proofs are comparatively much simpler and direct.
 
Our second contribution is to provide rates of convergence, a problem left open by \cite{klein}.  
In the context of quadruple learning, we obtain a rate in $O(\eps_n)$, where $\eps_n$ is the Hausdorff distance between the underlying sample $\{x_1, \dots, x_n\}$ and $U$, meaning, $\eps_n := \sup_{x \in U} \min_{i \in [n]} \|x - x_i\|$.  This is the first convergence rate for exact ordinal embedding that we know of.  (We are not able to obtain the same rate in the context of triple learning.)  
Compared to establishing consistency, the proof is much more involved.   

The last decade has seen a surge of interest in ordinal embedding, motivated by applications to recommender systems and large-scale psychometric studies made available via the internet, for example, databases for music artists similarity \citep{ellis2002quest,mcfee2011learning}.  Sensor localization \citep{nhat2008nonmetric} is another possible application.
Modern datasets being large, all quadruple or triple comparisons are rarely available, motivating the proposal of embedding methods based on a sparse set of comparisons \citep{borg2005modern,agarwal2007generalized,jamieson2011low,terada2014local}.  
\cite{terada2014local} study what they call {\em local ordinal embedding}, which they define as the problem of embedding an unweighted $K$-nearest neighbor ($K$-NN) graph.  With our notation, this is the situation where $\cC = \{(i,j,k): \delta_{ij} \le \delta_{i(K)} < \delta_{ik}\}$, $\delta_{i(K)}$ being the dissimilarity between item $i$ and its $K$th nearest-neighbor.  
\cite{terada2014local} argue that, when the items are points $x_1, \dots, x_n$ sampled from a smooth density on a bounded, connected, convex, and open subset $U \subset \bbR^d$ with smooth boundary, then $K = K_n \gg n^{2/(2+d)} (\log n)^{d/(2+d)}$ is enough for consistency.  
Our third contribution is to consider the related situation where $\cC = \{(i,j,k,\ell): \delta_{ij} < \delta_{k\ell} \text{ and } \max(\delta_{ij}, \delta_{ik}, \delta_{i\ell}) \le \delta_{i(K)}\}$, which provides us with the $K$-NN graph and also all the quadruple comparisons between the nearest neighbors.  In this setting, we are only able to show that $K_n \gg \sqrt{n \log n}$ is enough.      

Beyond local designs, which may not be feasible in some settings, \cite{jamieson2011low} consider the problem of adaptively (i.e., sequentially) selecting triple comparisons in order to minimize the number of such comparisons and yet deduce all the other triple comparisons. 
They consider a few methods, among which a non-metric version of the  landmark MDS method of \cite{de2004sparse}.
Less ambitious is the problem of selecting few comparisons in order to consistently embed the items when these are points in a Euclidean space.   
Our fourth contribution is to show that one can obtain a consistent embedding with a landmark design based on $a_n n$ queries, where $a_n$ is any diverging sequence.  Moreover, the embedding can be computed in (expected) time $\zeta(a_n) \, n$, for some function $\zeta : \bbR_+ \mapsto \bbR_+$. 

The rest of the paper is organized as follows.  
In \secref{theory}, we state our theoretical results and prove the simpler ones.
We then gather the remaining proofs in \secref{proofs}.
\secref{discussion} concludes the paper with a short discussion.

\section{Theory} \label{sec:theory}

In this section we present our theoretical findings.  Most proofs are gathered in \secref{proofs}.

We already defined isotonic functions in \eqref{isotonic}.
Following \citep{klein}, we say that a function $f : U \subset \bbR^d \mapsto \bbR^d$ is {\em weakly isotonic} if 
\beq \label{weak-isotonic}
\|x - y\| < \|x - z\| \implies \|f(x) - f(y)\| \le \|f(x) - f(z)\|, \quad \forall x,y,z \in U.
\eeq 
Obviously, if a function is isotonic \eqref{isotonic}, then it is weakly isotonic \eqref{weak-isotonic}. 
Weak isotonicity is in fact not much weaker than isotonicity.
Indeed, let $P$ be a property (e.g., `isotonic'), and say that a function $f : U \subset \bbR^d \mapsto \bbR^d$ has the property $P$ locally if for each $x \in U$ there is $r > 0$ such that $f$ has property $P$ on $B(x, r) \cap U$, where $B(x,r)$ denotes the open ball with center $x$ and radius $r$.

\begin{lem} \label{lem:local-weak}
Any locally weakly isotonic function on an open $U$ is also locally isotonic on $U$.  
\end{lem}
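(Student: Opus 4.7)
My plan is to fix $x_0 \in U$, take $R > 0$ such that $f$ is weakly isotonic on $B(x_0, R) \subset U$, and then show that $f$ is isotonic on $B(x_0, r)$ for some $r > 0$ small enough. So I need to show that for any $p_1, p_2, p_3, p_4 \in B(x_0, r)$ with $\|p_1 - p_2\| < \|p_3 - p_4\|$, one has $\|f(p_1) - f(p_2)\| \le \|f(p_3) - f(p_4)\|$.

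\medskip

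The method is a chaining argument. I construct a finite sequence of pairs $\pi_0 = \{p_1, p_2\}, \pi_1, \ldots, \pi_n = \{p_3, p_4\}$ such that each consecutive pair shares a point $w_k$ (the pivot), and the distance from $w_k$ to the non-shared endpoint strictly increases from $\pi_k$ to $\pi_{k+1}$. Applying weak isotonicity at each pivot gives $\|f(\pi_k)\| \le \|f(\pi_{k+1})\|$ (writing $\|f(\{a,b\})\| := \|f(a) - f(b)\|$), and telescoping yields the desired inequality. In the trivial case where $\{p_1, p_2\} \cap \{p_3, p_4\} \ne \emptyset$, weak isotonicity applies directly at the common point. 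In generic cases, a short chain of length 2 or 3 suffices whenever one of the four distances $\|p_i - p_j\|$, with $(i,j) \in \{(1,3), (1,4), (2,3), (2,4)\}$, lies strictly between $\|p_1 - p_2\|$ and $\|p_3 - p_4\|$: for example, $\{p_1, p_2\} \to \{p_1, p_3\} \to \{p_3, p_4\}$ works whenever $\|p_1 - p_2\| < \|p_1 - p_3\| < \|p_3 - p_4\|$, and the three analogous chains handle the remaining short-chain cases.

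\medskip

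The main obstacle I expect is the remaining case where none of the inter-point distances among $\{p_1, p_2, p_3, p_4\}$ lies in the open interval $(\|p_1 - p_2\|, \|p_3 - p_4\|)$. Here the chain must be extended using auxiliary pivot points $q \in B(x_0, R) \setminus B(x_0, r)$. Because pair-distance strictly increases along the chain, every intermediate pair-distance is forced to lie in $(\|p_1 - p_2\|, \|p_3 - p_4\|)$, and so the chain must walk one endpoint at a time across the configuration while respecting this envelope. Verifying that such a walk is always realizable inside $B(x_0, R)$ is essentially a geometric reachability argument: starting from $\{p_1, p_2\}$, at each step I push the non-pivot endpoint a little further from the pivot, drifting toward the target pair, and one shows by an explicit construction that the process terminates at $\{p_3, p_4\}$ in finitely many steps. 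Choosing $r$ sufficiently small relative to $R$ guarantees that all intermediate pivots remain in $B(x_0, R)$ and that the chain can always be completed.
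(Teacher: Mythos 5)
The paper itself does not argue this lemma from scratch: its proof is a one-line citation of \citep[Lem 6]{klein}, which states that a weakly isotonic function on $B(x,r)$ is isotonic on $B(x,r/4)$. Your chaining strategy (pairs sharing a pivot, pair-distances strictly increasing, telescoping weak isotonicity) is exactly the kind of argument that underlies that cited result, so the overall plan is sound, and the trivial case and the length-$2$/$3$ chains are handled correctly.

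However, as written there is a genuine gap: in the remaining case --- no cross-distance $\|p_i-p_j\|$, $i\in\{1,2\}$, $j\in\{3,4\}$, lies strictly between $a:=\|p_1-p_2\|$ and $b:=\|p_3-p_4\|$, e.g.\ two well-separated pairs of nearly equal length with $b-a$ arbitrarily small --- the entire content of the lemma is precisely the ``geometric reachability'' claim, and you assert it (``one shows by an explicit construction that the process terminates'') without constructing it. To close it you must actually exhibit the divider walk and verify three things: (i) a step fixes one endpoint and places the other anywhere at a strictly larger distance kept below $b$, so the midpoint of the pair can be translated by roughly $a$ per step in any direction; since for a fixed quadruple the required number of steps is finite (on the order of $2r/a$), a strictly increasing schedule of distances fits inside $(a,b)$; (ii) a terminal maneuver that lands one endpoint exactly on $p_3$ (possible because the moving endpoint may be placed at any point at the prescribed distance from the pivot, provided $\|{\rm pivot}-p_3\|$ is in the current window) and then moves the other endpoint onto $p_4$; (iii) all auxiliary points remain where weak isotonicity is available, i.e.\ in $B(x_0,R)$, which is what forces the quantitative relation between $r$ and $R$ (the walk stays within distance about $b\le 2r$ of $B(x_0,r)$, so something like $r\le R/3$ or $R/4$ is needed --- consistent with the $r/4$ in the cited lemma), whereas you only say ``$r$ sufficiently small.'' Until these steps are written out, your text is a proof plan rather than a proof; if completed, it would have the merit of making the lemma self-contained where the paper relies on an external citation.
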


\begin{proof}
This is an immediate consequence of \citep[Lem 6]{klein}, which implies that a weakly isotonic function on $B(x, r)$ is isotonic on $B(x, r/4)$.
\end{proof}

Suppose we have data points $x_1, \dots, x_n \in \bbR^d$.
Define
\beq \label{omega}
\Omega_n = \{x_1, \dots, x_n\}, \quad \Omega = \bigcup_{n \ge 1} \Omega_n = \{x_n : n \ge 1\}.
\eeq
Let $\delta_{ij} = \|x_i - x_j\|$ and suppose that we are only provided with a subset $\cC_n\subset [n]^4$ of distance comparisons as in \eqref{C}.  
To an (exact) ordinal embedding $p: [n] \mapsto \bbR^d$ --- which by definition satisfies \eqref{embed} --- we associate the map $\phi_n : \Omega_n \mapsto \bbR^d$ defined by $\phi_n(x_i) = p_i$ for all $i \in [n]$.
We crucially observe that, in the case of all quadruple comparisons ($\cC_n = [n]^4$), the resulting map $\phi_n$ is isotonic on $\Omega_n$; in the case of all triple comparisons ($\cC_n = [n]^3$), $\phi_n$ is only weakly isotonic on $\Omega_n$, instead.
In light of this, and the fact that the location, orientation and scale are all lost when only ordinal information is available, the problem of proving consistency of (exact) ordinal embedding reduces to showing that any such embedding is close to a similarity transformation as the sample size increases, $n \to \infty$.  
This is exactly what \cite{klein} do under some assumptions.

\subsection{Ordinal embedding based on all triple comparisons}

Our first contribution is to extend the consistency results of \cite{klein} on quadruple learning to triple learning.  Following their presentation, we start with a result where the sample is infinite, which is only a mild generalization of \citep[Th 3]{klein}.  

\begin{thm} \label{thm:infinite}
Let $U \subset \bbR^d$ be bounded, connected and open.  Suppose $\Omega$ is dense in $U$ and consider a locally weakly isotonic function $\phi : \Omega \mapsto \bbR^d$.  Then there is a similarity transformation $S$ that coincides with $\phi$ on $\Omega$.  
\end{thm}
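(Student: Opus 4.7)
The plan is to reduce the weakly isotonic (triple) case to the isotonic (quadruple) case already treated in \citep[Th 3]{klein} by means of Lemma \ref{lem:local-weak}. Since $\phi$ is defined on the countable set $\Omega$ rather than on the open set $U$, the key preliminary step is to extend $\phi$ continuously to $U$ in a manner that preserves its order-theoretic properties.

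First, I would show that $\phi$ is locally Lipschitz on $\Omega$. Fix $x \in U$ and choose a small ball $B = B(x,r) \cap U$ on which $\phi$ restricted to $\Omega \cap B$ is weakly isotonic. Pick reference points $u, v \in \Omega \cap B$ with $\|u - v\|$ comparable to $r$, and set $D = \|\phi(u) - \phi(v)\|$. Weak isotonicity at $u$ (and at $v$) yields $\|\phi(a) - \phi(b)\| \le D$ whenever $a, b \in \Omega \cap B$ and $\|a - b\| < \|u - v\|$; iterating this kind of comparison along chains of intermediate points from $\Omega$ (available by density) turns the bound into a quantitative Lipschitz estimate on a slightly smaller concentric ball. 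Standard extension of uniformly continuous functions then produces $\tilde\phi : U \to \bbR^d$ that agrees with $\phi$ on $\Omega$ and is continuous on $U$.

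By continuity of $\tilde\phi$ and density of $\Omega$ in $U$, the extension is locally weakly isotonic on $U$ in the sense of \eqref{weak-isotonic}; Lemma \ref{lem:local-weak} then upgrades this to local isotonicity. At this point (the argument of) \citep[Th 3]{klein} applies: on a bounded, connected, open $U \subset \bbR^d$, a locally isotonic function is the restriction of a single global similarity $S$. The logical structure there is that local isotonicity forces $\tilde\phi$ to be a similarity on each sufficiently small ball, overlapping balls share a common scale factor and orthogonal part, and connectedness of $U$ then propagates a single similarity transformation throughout. Restricting $S$ to $\Omega$ recovers $\phi$, as required.

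The main obstacle is the first step: extracting a quantitative local Lipschitz estimate out of the purely qualitative weak-isotonicity hypothesis, with constants controlled uniformly enough on small neighborhoods for the continuous extension to be well-defined and globally coherent. Once that extension is in place, Lemma \ref{lem:local-weak} does the conceptual work of bridging triple learning to quadruple learning, and the remainder is essentially the scaffolding of \citep{klein}.
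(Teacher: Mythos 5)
The central step of your proposal --- extracting a modulus of continuity for $\phi$ from weak isotonicity so that a continuous extension to $U$ exists --- contains a genuine gap, and the one concrete deduction you offer for it is not licensed by the hypothesis. The inequality ``weak isotonicity at $u$ (and at $v$) yields $\|\phi(a)-\phi(b)\|\le D$ whenever $\|a-b\|<\|u-v\|$'' is a \emph{quadruple} comparison: it compares the pair $(a,b)$ with the disjoint pair $(u,v)$. Weak isotonicity only allows comparisons of distances sharing a common endpoint, so what it actually gives is $\|\phi(a)-\phi(b)\|\le\|\phi(a)-\phi(c)\|$ whenever $\|a-b\|<\|a-c\|$; this yields local boundedness of $\phi$ (take $c$ at distance comparable to $r$ from $a$) but no decay of $\|\phi(a)-\phi(b)\|$ as $\|a-b\|\to 0$, which is what continuity requires. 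The subsequent ``iterating along chains of intermediate points'' is precisely the nontrivial content that is missing: with only vertex-sharing comparisons, a chain does not automatically transfer a bound across scales. The paper's \lemref{Lip-weak2} shows how to do it --- for nearby $x,x'$ one places points $z_j$ along a segment with consecutive gaps $\eta+5j\eps$ strictly increasing, so that every comparison used shares a vertex, the images of the chain form a $\xi$-packing of the (locally bounded) range with $\xi=\|\phi(x)-\phi(x')\|$, and a packing count gives $\xi\lesssim\|x-x'\|^{1/d}$. Note this is a H\"older-$1/d$ modulus, not the Lipschitz estimate you claim; Lipschitz is not available at this stage (it only emerges a posteriori, once $\phi$ is known to be a similarity), but H\"older suffices for the continuous extension. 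As written, your step 1 asserts the needed estimate rather than proving it, and you yourself identify it as the main obstacle; it is exactly where the work lies.

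Granting the continuity step, the remainder of your route is a legitimate alternative to the paper's: extend $\phi$ continuously, upgrade local weak isotonicity to local isotonicity via \lemref{local-weak}, apply \citep[Th 3]{klein} on small balls (a ball satisfies their domain hypotheses), and patch the local similarities over the connected $U$ using that two affine maps agreeing on an open set coincide. The paper instead proceeds self-containedly after the extension: it shows the extension preserves isosceles triangles locally, hence midpoints locally (via regular simplexes), and concludes with \lemref{midpoint}, \lemref{affine} and \lemref{iso}; this avoids invoking the quadruple-case theorem of \citep{klein} as a black box. So the architecture of your reduction is sound, but the proof stands or falls with the missing continuity estimate, which in this paper is exactly \lemref{Lip-weak2}.
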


The proof is largely based on that of \citep[Th 3]{klein}, but a bit simpler; see \secref{proof-thm-infinite}.  

We remark that there can only be one similarity with the above property, since similarities are affine transformations, and two affine transformations of $\bbR^d$ that coincide on $d+1$ affine independent points are necessarily identical.

In this theorem, the set $\Omega$ is dense in an open subset of $\bbR^d$, and therefore infinite.  In fact, \cite{klein} use this theorem as an intermediary result for proving consistency as the sample size increases.
Most of their paper is dedicated to establishing this, as their arguments are quite elaborate.
We found a more direct route by `tending to the limit as soon as possible', based on \lemref{diagonal} below, which is at the core of the Arzel\`a-Ascoli theorem.

For the remaining of this section, we consider the finite sample setting:
\beq \label{setting}
\begin{array}{c}
\text{ $U \subset \bbR^d$ is bounded, connected and open,} \\
\text{$\Omega_n = \{x_1, \dots, x_n\} \subset U$ is such that $\Omega := \{x_n : n \ge 1\}$ is dense in $U$,} \\
\text{and $\phi_n : \Omega_n \mapsto Q \subset \bbR^d$ is a function with values in a bounded set $Q$.}
\end{array}
\eeq

In the context of \eqref{setting}, we implicitly extend $\phi_n$ to $\Omega$, for example, by setting $\phi_n(x) = q$ for all $x \in \Omega \setminus \Omega_n$, where $q$ is a given point in $Q$, although the following holds for any extension.

\begin{lem} \label{lem:diagonal}
Consider $\Omega_n \subset \bbR^d$ finite and $\phi_n : \Omega_n \mapsto Q \subset \bbR^d$, where $Q$ is bounded.  Then there is $N \subset \bbN$ infinite such that $\phi(x) := \lim_{n \in N} \phi_n(x)$ exists for all $x \in \Omega := \bigcup_n \Omega_n$.
\end{lem}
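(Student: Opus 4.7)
The plan is a standard Cantor diagonal extraction, mirroring the key step in the proof of the Arzel\`a--Ascoli theorem that the author already invokes. First I would observe that $\Omega = \bigcup_n \Omega_n$ is a countable union of finite sets, hence itself countable, so I can enumerate $\Omega = \{y_1, y_2, \dots\}$. Under the convention adopted in \eqref{setting} --- or under any other extension of each $\phi_n$ to a map $\Omega \mapsto Q$, as the statement allows --- the quantity $\phi_n(y_k)$ is then defined for every $n,k$ and lies in the bounded set $Q \subset \bbR^d$.

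Next I would inductively build a nested chain of infinite index sets $\bbN \supset N_1 \supset N_2 \supset \cdots$ with the property that $(\phi_n(y_k))_{n \in N_k}$ converges in $\bbR^d$. The base case applies Bolzano--Weierstrass to the bounded sequence $(\phi_n(y_1))_{n \in \bbN}$ to produce $N_1$. Given $N_k$, the sequence $(\phi_n(y_{k+1}))_{n \in N_k}$ is still bounded, so Bolzano--Weierstrass yields an infinite $N_{k+1} \subset N_k$ along which $\phi_n(y_{k+1})$ converges; since a subsequence of a convergent sequence converges to the same limit, $(\phi_n(y_j))_{n \in N_{k+1}}$ remains convergent for every $j \le k+1$.

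Finally I would form the diagonal set $N = \{n_1 < n_2 < \cdots\}$ by choosing, for each $k$, an element $n_k \in N_k$ with $n_k > n_{k-1}$. For any fixed $k$, the tail $\{n_k, n_{k+1}, \dots\}$ is contained in $N_k$, so $(\phi_n(y_k))_{n \in N}$ is eventually a subsequence of the convergent sequence $(\phi_n(y_k))_{n \in N_k}$ and hence itself converges; defining $\phi(y_k)$ to be this limit produces the required pointwise limit on all of $\Omega$. There is no real obstacle in this argument: it uses only compactness of $\overline{Q}$ and countability of $\Omega$, and nothing specific to the ordinal-embedding setting beyond the boundedness assumption on the codomain.
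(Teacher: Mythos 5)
Your proposal is correct and follows essentially the same route as the paper: the same Cantor diagonal extraction with nested infinite index sets obtained from Bolzano--Weierstrass (boundedness of $Q$), followed by a diagonal choice whose tail beyond the $k$th term lies in $N_k$. The only cosmetic difference is that the paper fixes $n_k$ to be the $k$th element of $N_k$ while you allow any strictly increasing choice, which changes nothing.
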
  

This is called the diagonal process in \citep[Problem D, Ch 7]{kelley1975general}.  Although the result is classical, we provide a proof for completeness. 

\begin{proof}
Without loss of generality, suppose $\Omega_n = \{x_1, \dots, x_n\}$.
Let $N_0 = \bbN$.
Since $(\phi_n(x_1) : n \in N_0) \in Q$ and $Q$ is bounded, there is $N_1 \subset N_0$ infinite such that $\lim_{n \in N_1} \phi_n(x_1)$ exists.  In turn, since $(\phi_n(x_2) : n \in N_1)$ is bounded, there is $N_2 \subset N_1$ infinite such that $\lim_{n \in N_2} \phi_n(x_2)$ exists.  Continuing this process --- which formally corresponds to a recursion --- we obtain $\cdots \subset N_{k+1} \subset N_k \subset \cdots \subset N_1 \subset N_0 = \bbN$ such that, for all $k$, $N_k$ is infinite and $\lim_{n \in N_k} \phi_n(x_k)$ exists.  Let $n_k$ denote the $k$th element (in increasing order) of $N_k$ and note that $(n_k : k \ge 1)$ is strictly increasing.  Define $N = \{n_k : k \ge 1\}$.  Since $\{n_p, p \ge k\}  \subset N_k$, we have $\lim_{n \in N} \phi_n(x_k) = \lim_{n \in N_k} \phi_n (x_k)$, and this is valid for all $k \ge 1$. 
\end{proof}

\begin{cor} \label{cor:C1}
Consider the setting \eqref{setting} and assume that $\phi_n$ is weakly isotonic.  Then $(\phi_n)$ is sequentially pre-compact for the pointwise convergence topology for functions on $\Omega$ and all the functions where it accumulates are similarity transformations restricted to $\Omega$.  
\end{cor}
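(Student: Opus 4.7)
The plan is to combine the diagonal extraction of \lemref{diagonal} with the rigidity statement of \thmref{infinite}: first establish sequential pre-compactness, then identify every accumulation point as the restriction to $\Omega$ of a similarity.

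For pre-compactness, let $(\phi_{n_k})$ be an arbitrary subsequence of $(\phi_n)$. Since each $\phi_{n_k}$ takes values in the bounded set $Q$, an application of \lemref{diagonal} to the family $(\phi_{n_k})$ produces an infinite $N \subset \bbN$ along which $\phi(x) := \lim_{n \in N} \phi_n(x)$ exists for every $x \in \Omega$. Because $\Omega$ is countable, the pointwise convergence topology on functions $\Omega \mapsto \bbR^d$ is metrizable, so the existence of such a sub-subsequential limit for every subsequence is exactly sequential pre-compactness of $(\phi_n)$.

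Now fix any such accumulation point $\phi : \Omega \mapsto \bbR^d$. The key intermediate step is to verify that $\phi$ is weakly isotonic on $\Omega$. Given $x, y, z \in \Omega$ with $\|x - y\| < \|x - z\|$, the nested sets $\Omega_n$ eventually contain $\{x, y, z\}$, and the weak isotonicity of $\phi_n$ then yields $\|\phi_n(x) - \phi_n(y)\| \le \|\phi_n(x) - \phi_n(z)\|$. Letting $n \to \infty$ along $N$, the non-strict inequality on the right is stable under pointwise limits while the hypothesis $\|x - y\| < \|x - z\|$ is independent of $n$, so one obtains $\|\phi(x) - \phi(y)\| \le \|\phi(x) - \phi(z)\|$. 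Thus $\phi$ is weakly isotonic on $\Omega$ in the sense of \eqref{weak-isotonic}, hence a fortiori locally weakly isotonic. Since $\Omega$ is dense in the bounded connected open set $U$, \thmref{infinite} then yields a similarity transformation $S$ that coincides with $\phi$ on $\Omega$, completing the identification. I do not anticipate a serious obstacle: once the limit is extracted and weak isotonicity is transferred to it, \thmref{infinite} carries all the rigidity content, and the limit passage is elementary because both the strict hypothesis and the non-strict conclusion of \eqref{weak-isotonic} behave correctly under pointwise convergence.
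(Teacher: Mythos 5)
Your proposal is correct and follows essentially the same route as the paper: diagonal extraction via \lemref{diagonal} for pointwise sequential pre-compactness, transfer of weak isotonicity to any accumulation point by fixing $x,y,z$ in some $\Omega_m$ and passing the non-strict inequality to the limit, and then invoking \thmref{infinite} to identify the limit as a similarity restricted to $\Omega$. The extra remarks on metrizability and on weak isotonicity implying local weak isotonicity are harmless elaborations of what the paper leaves implicit.
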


The corresponding result \citep[Th 4]{klein} was obtained for isotonic (instead of weakly isotonic) functions and for domains $U$ that are finite unions of balls, and the convergence was uniform instead of pointwise.  For now, we provide a proof of Corollary~\ref{cor:C1}, which we derive as a simple consequence of \thmref{infinite} and \lemref{diagonal}.  

\begin{proof}
\lemref{diagonal} implies that $(\phi_n)$ is sequentially pre-compact for the pointwise convergence topology.
Let $\phi$ be an accumulation point of $(\phi_n)$, meaning that there is $N \subset \bbN$ infinite such that $\phi(x) = \lim_{n \in N} \phi_n(x)$ for all $x \in \Omega$.  Take $x,y,z \in \Omega$ such that $\|x-y\| < \|x-z\|$.  By definition, there is $m$ such that $x,y,z \in\Omega_m$, and therefore $\|\phi_n(x)- \phi_n(y)\| \le \|\phi_n(x)- \phi_n(z)\|$ for all $n \ge m$.  Passing to the limit along $n \in N$, we obtain $\|\phi(x)- \phi(y)\| \le \|\phi(x)- \phi(z)\|$.
Hence, $\phi$ is weakly isotonic on $\Omega$ and, by \thmref{infinite}, it is therefore the restriction of a similarity transformation to $\Omega$.  
\end{proof}

It is true that \citep[Th 4]{klein} establishes a uniform convergence result.  We do the same in \thmref{consistent} below, but with much simpler arguments.  The key are the following two results bounding the modulus of continuity of a (resp.~weakly) isotonic function.  We note that the second result (for weakly isotonic functions) is very weak but sufficient for our purposes here.  
%
For $\Lambda \subset V \subset \bbR^d$, define $\delta_H(\Lambda, V) = \sup_{y \in V} \inf_{x \in \Lambda} \|y - x\|$, which is their Hausdorff distance.  
We say that $(y_i : i \in I) \subset \bbR^d$ is an $\eta$-packing if $\|y_i - y_j\| \ge \eta$ for all $i \ne j$.
We recall that the size of the largest $\eta$-packing of a Euclidean ball of radius $r$ is of exact order $(r/\eta)^{-d}$.
For a set $V \subset \bbR^d$, let $\diam(V) = \sup_{x,y \in V} \|x -y\|$ be its diameter and let 
\beq \label{rho}
\rho(V) = \arg \sup_{r > 0} \{\exists v \in V : B(v,r/2) \subset V\},
\eeq
which is the diameter of a largest ball inscribed in $V$.

Everywhere in the paper, $d$ is fixed, and in fact implicitly small as we assume repeatedly that the sample (of size $n$) is dense in a full-dimensional domain of $\bbR^d$.  
In particular, all the implicit constants of proportionality that follow depend solely on $d$.

\begin{lem} \label{lem:Lip}
Let $V \subset \bbR^d$ be open.  Consider $\Lambda \subset V$ and set $\eps = \delta_H(\Lambda, V)$.  Let $\psi : \Lambda \mapsto Q$ be isotonic, where $Q \subset \bbR^d$ is bounded.  There is $C \propto \diam(Q)/\rho(V)$, such that
\beq \label{Lip}
\|\psi(x) - \psi(x')\| \le C (\|x - x'\| + \eps), \quad \forall x,x' \in \Lambda.
\eeq
\end{lem}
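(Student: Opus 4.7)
The plan is a packing argument: isotonicity turns a well-separated family of pairs in $\Lambda$ into a well-separated family of images in $Q$, and counting these inside the bounded set $Q$ forces $\|\psi(x) - \psi(x')\|$ to be small. Fix $x, x' \in \Lambda$ and write $s = \|x - x'\|$, $\eta = \|\psi(x) - \psi(x')\|$, and $r = s + 2\eps$; the goal is to show $\eta \lesssim (\diam(Q)/\rho(V)) \, r$, which immediately gives the claimed inequality since $r \le 2(s + \eps)$.

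First, I would produce a large packing inside the largest ball inscribed in $V$. By definition of $\rho := \rho(V)$, there exists $v$ with $B(v, \rho/2) \subset V$. In the main regime $4r \le \rho$, pick a maximal $4r$-separated set $\{z_1, \dots, z_m\}$ in $B(v, \rho/2)$; the standard volume comparison recalled in the paper gives $m \gtrsim (\rho/r)^d$, with the implicit constant depending only on $d$. Each $z_i$ lies in $V$, so by $\delta_H(\Lambda, V) = \eps$ there exists $w_i \in \Lambda$ with $\|w_i - z_i\| \le \eps$, and the triangle inequality yields $\|w_i - w_j\| \ge 4r - 2\eps > s$ for all $i \ne j$.

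Next I would invoke isotonicity. For each $i \ne j$, since $s = \|x - x'\| < \|w_i - w_j\|$, the isotonic property of $\psi$ gives $\eta \le \|\psi(w_i) - \psi(w_j)\|$, so that $\{\psi(w_1), \dots, \psi(w_m)\}$ is an $\eta$-packing contained in $Q$. The packing upper bound in a set of diameter $\diam(Q)$ then gives $m \lesssim (\diam(Q)/\eta)^d$. Comparing with the lower bound on $m$ yields $(\rho/r)^d \lesssim (\diam(Q)/\eta)^d$, hence $\eta \lesssim (\diam(Q)/\rho) \, r$.

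Finally, I would handle the degenerate regime $4r > \rho$ by the trivial bound $\eta \le \diam(Q) \le (4 \diam(Q)/\rho) \, r$, which is absorbed into the same constant. The only delicate point, and the one I would scrutinize most carefully, is the $\eps$-perturbation from $(z_i) \subset V$ to $(w_i) \subset \Lambda$: inflating the separation in the inscribed packing to $4r$ (rather than, say, $2r$) is precisely what ensures that after perturbation the $w_i$'s remain \emph{strictly} more than $s$ apart, so that the isotonic implication applies verbatim to the pair $(x, x')$ against each $(w_i, w_j)$.
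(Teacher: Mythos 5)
Your proof is correct and follows essentially the same route as the paper: build an $\eps$-perturbed packing of the largest inscribed ball whose pairwise separations exceed $\|x-x'\|$, use isotonicity to transform it into a $\|\psi(x)-\psi(x')\|$-packing of $Q$, and compare the two packing counts. The only differences are cosmetic (your separation $4(\|x-x'\|+2\eps)$ versus the paper's $\|x-x'\|+3\eps$, and your explicit treatment of the regime where the separation exceeds $\rho(V)$), so the constants match up to factors depending only on $d$.
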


\begin{proof}
The proof is based on the fact that an isotonic function transforms a packing into a packing.  
Take $x,x' \in \Lambda$ such that $\xi := \|\psi(x) - \psi(x')\| > 0$, and let $\eta = \|x - x'\|$.
Since $V$ is open it contains an open ball of diameter $\rho(V)$.  Let $y_1, \dots, y_m$ be an $(\eta + 3 \eps)$-packing of such a ball with $m \ge C_1 (\rho(V)/(\eta + \eps))^{d}$ for some constant $C_1$ depending only on $d$.  Then let $x_1, \dots, x_m \in \Lambda$ such that $\max_i \|y_i - x_i\| \le \eps$.  By the triangle inequality, for all $i \ne j$ we have 
\[\|x_i - x_j\| \ge \|y_i - y_j\| - 2\eps \ge \eta + \eps > \|x - x'\|.\]  Because $\psi$ is isotonic, we have $\|\psi(x_i) - \psi(x_j)\| \ge \xi$, so that $\psi(x_1), \dots, \psi(x_m)$ is a $\xi$-packing.
Therefore, there is a constant $C_2$ depending only on $d$ such that $m \le C_2 (\diam(Q)/\xi)^{d}$.  We conclude that $\xi \le (C_2/C_1)^{1/d} (\diam(Q)/\rho(V)) (\eta + \eps)$.
\end{proof}

For $V \subset \bbR^d$ and $h > 0$, let $V^{h} = \{x \in V : \exists y \in V \text{ s.t. } x \in B(y, h) \subset V\}$.  We note that $V^h$ is the complement of the $h$-convex hull of $V^\comp := \bbR^d \setminus V$ --- see \citep{MR2977397} and references therein.

\begin{lem} \label{lem:Lip-weak2}
In the context of \lemref{Lip}, if $\psi$ is only weakly isotonic, then there is $C \propto \diam(Q)$, such that for all $h > 0$,
\beq \label{Lip-weak2}
\|\psi(x) - \psi(x')\| \le C \big(\|x - x'\|/h + \sqrt{\eps/h}\big)^{1/d}, \quad \forall x \in \Lambda \cap V^{h}, \forall x' \in \Lambda.
\eeq
\end{lem}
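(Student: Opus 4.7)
Write $\eta = \|x - x'\|$ and $\xi = \|\psi(x) - \psi(x')\|$. Since $\psi$ takes values in the bounded set $Q$, we have $\xi \le \diam(Q)$, so the bound is trivial once $\eta$ is of the same order as $h$; focus on the regime $\eta \ll h$ and $\eps \ll h$. By $x \in V^h$, fix $y_0 \in V$ with $x \in B(y_0,h) \subset V$. The starting observation is immediate from weak isotonicity centered at $x$: for every $y \in \Lambda$ with $\|x - y\| > \eta$ one has $\|\psi(x) - \psi(y)\| \ge \xi$, equivalently $\psi(B(x,\eta) \cap \Lambda) \subset \overline{B(\psi(x), \xi)}$.

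The strategy is to mimic the packing argument from the proof of \lemref{Lip}, leveraging \citep[Lem.~6]{klein}---already used in the proof of \lemref{local-weak}---which upgrades weak isotonicity on a ball $B(z,r)$ to strict isotonicity on the concentric ball $B(z,r/4)$; combined with \lemref{Lip}, this produces a local Lipschitz bound of order $\diam(Q)/r$ on each such sub-ball for which $B(z,r) \subset V$. Concretely, I would (i) construct an $(\eta + 3\eps)$-packing $\tilde q_1, \dots, \tilde q_M \in \Lambda \cap B(y_0,h/2)$ of cardinality $M$ of order $(h/(\eta+\eps))^d$; (ii) observe that all but $O(1)$ of these points have $\|x - \tilde q_i\| > \eta$, so their images lie outside $B(\psi(x),\xi)$; (iii) cover the packing by Lemma~6 isotonic sub-balls, on each of which an isotonic comparison of a generic packing pair with the reference pair $(x,x')$ forces $\|\psi(\tilde q_i) - \psi(\tilde q_j)\| \ge \xi$; (iv) count a $\xi$-packing in $Q$ by $(\diam(Q)/\xi)^d$ and rearrange to obtain a bound of the desired shape.

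The main obstacle is step~(iii): the isotonic comparison requires a single Lemma~6 sub-ball containing simultaneously $\tilde q_i$, $\tilde q_j$ \emph{and} the reference pair $(x,x')$. When $x$ lies near the boundary of the radius-$h$ ball $B(y_0,h)$---a case genuinely allowed by the definition of $V^h$, since $V^h$ only requires $x$ to belong to \emph{some} radius-$h$ ball in $V$, not to be centered in one---the largest ball around $x$ lying in $V$ shrinks, and the covering becomes inefficient. The weaker $(1/d)$-H\"older exponent and the $\sqrt{\eps/h}$ slack in the statement (as opposed to the Lipschitz bound of \lemref{Lip}) are precisely the price paid to absorb this boundary-degenerate regime, and most of the technical work will consist in choosing the packing scale optimally against the depth of $x$ inside $V$. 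The author's own remark that the bound is ``very weak but sufficient'' suggests that one does not need to optimize the constants sharply.
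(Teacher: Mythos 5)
Your plan does not go through as stated, and the obstacle you yourself flag in step~(iii) is fatal rather than technical. The comparisons you need are between a packing pair $(\tilde q_i,\tilde q_j)$ sitting inside $B(y_0,h/2)$ and the reference pair $(x,x')$, where $x$ may lie anywhere in $B(y_0,h)$ --- in particular arbitrarily close to $\partial B(y_0,h)$ and to $\partial V$. Any route through \citep[Lem 6]{klein} requires a single ball, contained (up to the factor $4$) in the region where $\Lambda$ is $\eps$-dense, that holds all four points; no such ball is guaranteed, and your proposed remedy of tuning the packing scale against the ``depth'' of $x$ in $V$ cannot work because that depth can be arbitrarily small while the bound \eqref{Lip-weak2} to be proved does not degrade with it. A second, independent gap: \citep[Lem 6]{klein} upgrades weak isotonicity to isotonicity for functions on a continuum ball, whereas here $\psi$ lives on the $\eps$-net $\Lambda$; to use it you would first have to prove a quantitative, $\eps$-perturbed version of that lemma (isotonicity up to an additive slack in the hypothesis), which is itself a substantive piece of work that your outline treats as available off the shelf.

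The paper's proof avoids both issues by never upgrading to isotonicity at all. It builds a one-dimensional chain $x_0=x, x_1,\dots,x_k$ of points of $\Lambda$ near the points $z_j=z_{j-1}+(\eta+5j\eps)u$ along a ray from $x$ pointing into the ball $B(y,h)$ (with $x_{-1}=x'$), the gaps increasing by at least $\eps$ at each step so that, after absorbing the $\eps$-perturbations, every comparison needed is of two distances sharing a common endpoint on the chain --- exactly what weak isotonicity handles. Chaining these comparisons shows all pairwise image distances among $x_1,\dots,x_k$ are at least $\xi=\|\psi(x)-\psi(x')\|$, so the images form a $\xi$-packing of $Q$ of cardinality $k\gtrsim\min\bigl(h/\eta,\sqrt{h/\eps}\bigr)$, and comparing with the volume bound $k\lesssim(\diam(Q)/\xi)^d$ gives \eqref{Lip-weak2}. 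Note also that the exponent $1/d$ and the $\sqrt{\eps/h}$ term are not, as you conjecture, the price of a boundary-degenerate covering: they come from the chain being one-dimensional (only $k$ points, not $k^d$) and from the quadratic accumulation of the $\eps$-increments needed to keep the gaps monotone. If you want to salvage your packing idea, you would need comparisons between disjoint pairs, i.e.\ genuine isotonicity, and that is precisely what is not available here.
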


\begin{proof}
Assume that $V^{h} \ne \emptyset$, for otherwise there is nothing to prove.  Take $x \in \Lambda \cap V^{h}$ and $x' \in \Lambda$ such that $\xi := \|\psi(x) - \psi(x')\| > 0$, and let $\eta = \|x - x'\|$.  
Because $\psi$ is bounded, it is enough to prove the result when $\eta, \eps < h/2$.  
Let $y \in V$ be such that $x \in B(y, h) \subset V$.  There is $y' \in B(y, h)$ such that $y \in [x y']$ and $\|x - y'\| \ge 2h/3$.  Define $u = (y'-x)/\|y'-x\|$.  
Let $z_0 = x$, and for $j \ge 1$, define $z_j = z_{j-1} + (\eta + 5 j \eps) u$.  
Let $k \ge 0$ be maximum such that $\sum_{j=1}^k (\eta + 5 j \eps) < h/2$.  
Since $k$ satisfies $k \eta + 5 k^2 \eps \ge h/2$, we have $k \ge \min(h/(4\eta), \sqrt{h/(10\eps)})$.
By construction, for all $j \in [k]$, $z_j \in [x y']$ and $B(z_j, 2\eps) \subset B(y,h)$.
Let $x_{-1} = x', x_0 = x$ and take $x_1, \dots, x_{k} \in \Lambda$ such that $\max_j \|x_j - z_j\| \le \eps$.  
By the triangle inequality, for $j=2,\dots,k$,
\[\|x_{j} - x_{j-1}\| \ge \|z_j - z_{j-1}\| - 2\eps \ge \|z_{j-1} - z_{j-2}\| + 3 \eps \ge \|x_{j-1} - x_{j-2}\| + \eps,\]
which implies by induction that 
\[\|x_{j} - x_{j-1}\| \ge \|x_1 - x_0\| + \eps \ge \|z_1 - z_0\| = \eta + 5\eps > \|x -x'\|.\]
By weak isotonicity, this implies that $\|\psi(x_{j}) - \psi(x_{j-1})\| \ge \|\psi(x) - \psi(x')\| = \xi$.
We also have, for any $i,j \in [k]$ such that $1 \le i \le j-2$, 
\[\|x_j - x_i\| \ge \|z_j - z_i\| - 2\eps \ge \|z_j - z_{j-1}\| + \eta + 5\eps - 2 \eps \ge \|x_j - x_{j-1}\| + \eta + \eps.\]
By weak isotonicity, this implies that $\|\psi(x_j) - \psi(x_i)\| \ge \|\psi(x_{j}) - \psi(x_{j-1})\|$ for all $0 \le i < j \le k$.  
Consequently, $(\psi(x_j) : j \in [k])$ forms a $\xi$-packing of $Q$.  Hence, $k \le C' (\diam(Q)/\xi)^{d}$, for some constant $C'$.  We conclude with the lower bound on $k$.  
\end{proof}

From this control on the modulus of continuity, we obtain a stronger version of Corollary~\ref{cor:C1}.  

\begin{thm} \label{thm:consistent}
Under the same conditions as Corollary~\ref{cor:C1}, we have the stronger conclusion that there is a sequence $(S_n)$ of similarities such that, for all $h > 0$, $\max_{x \in \Omega_n \cap U^{h}} \|\phi_n(x) - S_n(x)\| \to 0$ as $n \to \infty$.  
If in fact each $\phi_n$ is isotonic, then this remains true when $h = 0$.
\end{thm}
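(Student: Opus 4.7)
The plan is to establish, for each fixed $h > 0$ (or $h = 0$ in the isotonic case) and each $\eta > 0$, the existence of $N = N(h,\eta)$ such that for every $n \ge N$ some similarity $\eta$-approximates $\phi_n$ uniformly on $\Omega_n \cap U^h$. A diagonal construction with $h_k = \eta_k = 1/k$ then assembles a single sequence $(S_n)$ that works for every $h > 0$ simultaneously: for $h$ fixed and $k \ge 1/h$ we have $U^h \subset U^{h_k}$, so an $\eta_k$-approximation on $\Omega_n \cap U^{h_k}$ is a fortiori a $1/k$-approximation on $\Omega_n \cap U^h$.

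I would prove the fixed-$h$ claim by contradiction. If it failed, there would be $\eta > 0$ and a subsequence $(n_k)$ along which no similarity $\eta$-approximates $\phi_{n_k}$ on $\Omega_{n_k} \cap U^h$. Invoking \lemref{diagonal} combined with \thmref{infinite} as in the proof of Corollary~\ref{cor:C1}, pass to a further subsequence on which $\phi_{n_k}$ converges pointwise on $\Omega$ to the restriction of some similarity $S$. The goal then reduces to showing that this very $S$ in fact $\eta$-approximates $\phi_{n_k}$ on $\Omega_{n_k} \cap U^h$ for all large $k$.

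The bridge from pointwise to uniform convergence is equicontinuity, supplied by \lemref{Lip} and \lemref{Lip-weak2}. Fix a finite $\gamma$-net $F \subset \Omega$ of $\overline{U}$, with $\gamma$ to be tuned. For $k$ large, $F \subset \Omega_{n_k}$, and since $F$ is finite, $\max_{y \in F} \|\phi_{n_k}(y) - S(y)\| \to 0$. For any $x \in \Omega_{n_k} \cap U^h$ pick $y \in F$ with $\|x - y\| \le \gamma$ and use the triangle inequality:
\begin{align*}
\|\phi_{n_k}(x) - S(x)\| \le \|\phi_{n_k}(x) - \phi_{n_k}(y)\| + \|\phi_{n_k}(y) - S(y)\| + \|S(y) - S(x)\|.
\end{align*}
In the weakly isotonic case the first term is controlled by \lemref{Lip-weak2} through $C(\gamma/h + \sqrt{\eps_{n_k}/h})^{1/d}$ with $\eps_{n_k} := \delta_H(\Omega_{n_k}, U) \to 0$; in the isotonic case \lemref{Lip} gives the stronger $C(\gamma + \eps_{n_k})$, valid without any interior offset. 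The third term is at most $\lambda \gamma$, where $\lambda$ is the scale of $S$, bounded a priori by a multiple of $\diam(Q)/\rho(U)$ (e.g.\ by comparing $\|S(y)-S(y')\| \le \diam(Q)$ against two fixed points $y,y' \in \Omega$ with $\|y-y'\| \gtrsim \rho(U)$). Choosing $\gamma$ small first and then $k$ large makes the total strictly less than $\eta$, contradicting the failure hypothesis.

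The main obstacle is the degradation of \lemref{Lip-weak2} with the factor $1/h$: this is precisely what forces the interior offset $U^h$ with $h > 0$ in the weakly isotonic case, whereas \lemref{Lip}'s genuine Lipschitz-type control lets us take $h = 0$ when each $\phi_n$ is isotonic. A minor subtlety is that the limit similarity $S$ produced above depends on the extracted subsequence, so the similarities assembled at different stages of the diagonal construction need not themselves converge; this is consistent with the statement, which only asserts that the approximation error vanishes.
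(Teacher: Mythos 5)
Your proposal is correct and follows essentially the same route as the paper: extract a pointwise-convergent subsequence via \lemref{diagonal}/Corollary~\ref{cor:C1} whose limit is a similarity, upgrade to uniform convergence on $\Omega_n \cap U^h$ (resp.\ $\Omega_n$) using the modulus-of-continuity bounds of \lemref{Lip-weak2} (resp.\ \lemref{Lip}) together with a finite net and the triangle inequality, and conclude by contradiction. The only difference is cosmetic: you make explicit the diagonal assembly over $h_k=\eta_k=1/k$ of a single sequence $(S_n)$ valid for all $h>0$, which the paper leaves implicit by arguing at fixed $h$ that $\inf_{S\in\cS}\max_{x\in\Omega_n\cap U^h}\|\phi_n(x)-S(x)\|\to 0$.
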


We remark that when $U$ is a connected union of a possibly uncountable number of open balls of radius at least $h > 0$, then $U = U^h$.  This covers the case of a finite union of open balls considered in \citep{klein}.  
We also note that, if $U$ is bounded and open, and $\partial U$ has bounded curvature, then there is $h>0$ such that $U = U^h$.  This follows from the fact that, in this case, $U^\comp$ has positive reach \citep{MR0110078}, and is therefore $h$-convex when $h$ is below the reach by\footnote{This proposition is stated for compact sets (which is not the case of $U^\comp$) but easily extends to the case where set is closed with compact boundary} \citep[Prop~1]{MR2977397}.  Moreover, our arguments can be modified to accommodate sets $U$ with boundaries that are only Lipschitz, by reasoning with wedges in \lemref{Lip-weak2}.

\thmref{consistent} now contains \citep[Th 4]{klein}, and extends it to weakly isotonic functions and to more general domains $U$.  
Overall, our proof technique is much simpler, shorter, and elementary.

Define $\eps_n = \delta_H(\Omega_n, U)$, which quantifies the density of $\Omega_n$ in $U$.
Because $\Omega_{n+1} \subset \Omega_n$ and $\Omega$ is dense in $U$, we have $\eps_n \searrow 0$ as $n \to \infty$.

\begin{proof}
Let $\phi$ be an accumulation point of $(\phi_n)$ for the pointwise convergence topology, meaning there is $N \subset \bbN$ infinite such that $\phi(x) = \lim_{n \in N} \phi_n(x)$ for all $x \in \Omega$.  We show that, in fact, the convergence is uniform.  

First, suppose that each $\phi_n$ is isotonic.  In that case, \lemref{Lip} implies the existence of a constant $C>0$ such that $\|\phi_n(x) - \phi_n(x')\| \le C (\|x - x'\| + \eps_n)$ for all $x,x' \in \Omega_n$, and for all $n$.  
Passing to the limit along $n \in N$, we get $\|\phi(x) - \phi(x')\| \le C \|x - x'\|$ for all $x,x' \in \Omega$.
(In fact, we already knew this from Corollary~\ref{cor:C1}, since we learned there that $\phi$ coincides with a similarity, and is therefore Lipschitz.)
Fix $\eps > 0$.
There is $m$ such that $\eps_m \le \eps$.
Then there is $m' \ge m$ such that $\max_{i \in [m]} \|\phi_n(x_i) - \phi(x_i)\| \le \eps$ for all $n \in N$ with $n \ge m'$.
For such an $n$, and $x \in \Omega_n$, let $i \in [m]$ be such that $\|x - x_i\| \le \eps_m$.  By the triangle inequality,
\[\begin{split}
\|\phi_n(x) - \phi(x)\| 
& \le \|\phi_n(x) - \phi_n(x_i)\| + \|\phi_n(x_i) - \phi(x_i)\| + \|\phi(x_i) - \phi(x)\| \\
& \le C (\|x - x_i\| + \eps_n) + \|\phi_n(x_i) - \phi(x_i)\| + C \|x_i - x\| \\
& \le C (\eps_m + \eps_n) + \eps + C \eps_m \le (3 C + 1) \eps. 
\end{split}\]
Since $x \in \Omega$ is arbitrary and $\eps$ can be taken as small as desired, this shows that the sequence $(\phi_n : n \in N)$ convergences uniformly to $\phi$ over $(\Omega_n : n \in N)$.

When the $\phi_n$ are only weakly isotonic, we use \lemref{Lip-weak2} to get a constant $C>0$ depending on $\diam(Q)$ and $h > 0$ such that $\|\phi_n(x) - \phi_n(x')\| \le C (\|x - x'\| + \sqrt{\eps_n})^{1/d}$ for all $x \in \Omega_n \cap U^{h}$ and all $x' \in \Omega_n$, and for all $n$.  
Passing to the limit along $n \in N$, we get $\|\phi(x) - \phi(x')\| \le C \|x - x'\|^{1/d}$ for all $x,x' \in \Omega$.
(In fact, $\|\phi(x) - \phi(x')\| \le C \|x - x'\|$ for all $x,x' \in \Omega$ from Corollary~\ref{cor:C1}, as explained above.)
The rest of the arguments are completely parallel.  We conclude that $(\phi_n : n \in N)$ convergences uniformly to $\phi$ over $(\Omega_n \cap U^h : n \in N)$.

Let $\cS$ denote the similarities of $\bbR^d$.
For any functions $\phi, \psi : \Omega \mapsto \bbR^d$, define $\delta_n(\phi, \psi) = \max_{x \in \Omega_n \cap U^h} \|\phi(x) - \psi(x)\|$, and also $\delta_n(\phi, \cS) = \inf_{S \in \cS} \delta_n(\phi, S)$.  
Our end goal is to show that $\delta_n(\phi_n, \cS) \to 0$ as $n \to \infty$.  Suppose this is not the case, so that there is $\eta > 0$ and $N \subset \bbN$ infinite such that $\delta_n(\phi_n, \cS) \ge \eta$ for all $n \in N$.  By Corollary~\ref{cor:C1}, there is $N_1 \subset N$ and $S \in \cS$ such that $S(x) = \lim_{n \in N_1} \phi_n(x)$ for all $x \in \Omega$.  As we showed above, the convergence is in fact uniform over $(\Omega_n \cap U^h : n \in N_1)$, meaning $\lim_{n \in N_1}\delta_n(\phi_n, S) = 0$.  At the same time, we have $\delta_n(\phi_n, S) \ge \delta_n(\phi_n, \cS) \ge \eta$.  We therefore have a contradiction.
\end{proof}


\subsection{Rates of convergence}
Beyond consistency, we are able to derive convergence rates.
We do so for the isotonic case, i.e., the quadruple comparison setting.
Recall that $\eps_n = \delta_H(\Omega_n, U)$.

\begin{thm} \label{thm:rates}
Consider the setting \eqref{setting} with $\phi_n$ isotonic. There is $C$ depending only on $(d, U)$, and a sequence of similarities $S_n$ such that $\max_{x \in \Omega_n} \|\phi_n(x) - S_n(x)\| \le C \diam(Q) \eps_n$.  If $U = U^h$ for some $h > 0$, then $C =  C'/\diam(U)$ where $C'$ is a function of $(d, h/\diam(U), \rho(U)/\diam(U))$.
\end{thm}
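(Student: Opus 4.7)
My plan is to lift the consistency statement of Theorem~\ref{thm:consistent} to a quantitative rate by first establishing a two-sided almost-isometry property for $\phi_n$ (up to a global scaling $L_n$), and then pinning down the similarity $S_n$ by matching $\phi_n$ on a well-conditioned reference simplex inside $\Omega_n$. The target rate $O(L_n \eps_n)$ with $L_n \asymp \diam(Q)/\diam(U)$ would then emerge from propagating the pointwise error along ``distance-to-reference'' coordinates.

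The first and hardest step is to complement Lemma~\ref{lem:Lip} with a matching lower Lipschitz bound. The upper bound already gives $\|\phi_n(x) - \phi_n(x')\| \le C_1(\|x-x'\|+\eps_n)$ with $C_1 \propto \diam(Q)/\rho(U)$. For the lower direction I would fix an extremal pair $(a^*_n,b^*_n) \in \Omega_n^2$ with $\|a^*_n - b^*_n\|$ maximal (so $\ge \diam(U) - 2\eps_n$), set $L_n := \|\phi_n(a^*_n) - \phi_n(b^*_n)\|/\|a^*_n - b^*_n\|$, and show
\[
\|\phi_n(x) - \phi_n(x')\| \ge c_1\, L_n\, \|x-x'\| - C_2\, L_n\, \eps_n, \qquad \forall x,x' \in \Omega_n,
\]
by running the packing argument of Lemma~\ref{lem:Lip} in reverse on the range: a packing of a large Euclidean ball inside $Q$ must pull back to a packing in $U$, so excessive compression on any pair would force too many image points to collide. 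I expect most of the work to lie here, since the reverse direction is not symmetric to Lemma~\ref{lem:Lip} and requires chaining through intermediate reference pairs in $\Omega_n$ to transfer the calibration $L_n$ from $(a^*_n,b^*_n)$ down to arbitrary pairs.

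Given the two-sided bi-Lipschitz-up-to-scaling estimate, I would construct $S_n$ by selecting a reference simplex $\{z_0,\dots,z_d\} \subset \Omega_n$ greedily, choosing each $z_k$ to be maximally far from the affine hull of the previous ones. Since $\Omega_n$ is an $\eps_n$-net in the full-dimensional $U$, the resulting simplex has diameter and minimum altitudes comparable to $\diam(U)$ and $\rho(U)$, up to $O(\eps_n)$ perturbations. I would then define $S_n$ as the unique orientation-preserving or reversing similarity obtained by Procrustes-matching $(z_0,\dots,z_d)$ to $(\phi_n(z_0),\dots,\phi_n(z_d))$; the bi-Lipschitz estimate immediately yields $\|\phi_n(z_k) - S_n(z_k)\| = O(L_n \eps_n)$.

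For an arbitrary $x \in \Omega_n$, both $\phi_n(x)$ and $S_n(x)$ are approximately determined by their distances to the $\phi_n(z_k)$: by the bi-Lipschitz estimate $\|\phi_n(x)-\phi_n(z_k)\| = L_n\|x-z_k\| + O(L_n \eps_n)$, while $\|S_n(x)-S_n(z_k)\| = L_n\|x-z_k\|$ exactly. The map ``Euclidean position'' $\mapsto$ ``tuple of distances to a well-conditioned simplex'' is locally bi-Lipschitz with constant controlled by $\diam(U)/\rho(U)$, so inverting it (on the correct side of the reflection chosen by $S_n$) gives $\|\phi_n(x) - S_n(x)\| = O(L_n \eps_n) = O\bigl((\diam(Q)/\diam(U))\,\eps_n\bigr)$. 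The dependence on $h$ in the $U = U^h$ case enters through the simplex conditioning, which can be kept uniform only when the reference points sit a definite distance inside $U$; the main obstacle remains the lower Lipschitz bound, while the final propagation step is elementary linear algebra.
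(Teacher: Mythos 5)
There is a genuine gap, and it sits at the heart of your ``propagation'' step. From your two-sided estimate you only get
$c_1 L_n\|x-z_k\| - C L_n\eps_n \le \|\phi_n(x)-\phi_n(z_k)\| \le C_1 L_n(\|x-z_k\|+\eps_n)$ with $c_1 < 1 < C_1$ constants coming from packing arguments (they are ratios of packing numbers, so inherently dimension-dependent and not equal to $1$). That is a bi-Lipschitz statement up to constant multiplicative slack, and it does \emph{not} imply the statement you then invoke, namely $\|\phi_n(x)-\phi_n(z_k)\| = L_n\|x-z_k\| + O(L_n\eps_n)$. Fed into a trilateration bound such as \lemref{trilateration}, multiplicative slack of size $C_1-c_1$ produces errors of order $L_n\,\diam(U)$, not $O(L_n\eps_n)$, so the Procrustes/trilateration finish collapses. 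The assertion that a \emph{single} scale factor matches \emph{all} pairwise distances up to additive error $O(\eps_n)$ — i.e.\ that $\phi_n/L_n$ is an $O(\eps_n)$-isometry — is essentially the content of \thmref{rates} itself, and nothing in your outline produces it: isotonicity only transfers order, and packing counts only give constant-factor control. The paper has to work much harder exactly here: it shows (via a 1-NN extension, \lemref{near-sim}, and regular-simplex constructions) that $\hat\phi_n$ approximately preserves isosceles configurations and midpoints, invokes the Vestfrid result (\lemref{near-midpoint}) to get a local affine approximation, upgrades it to an approximate similarity via \lemref{affine-near-sim}, then uses the Alestalo--Trotsenko--V\"ais\"al\"a $\eps$-isometry theorem (\lemref{eps-isometry}) on each piece of a cover of $U$ and glues the local isometries along chains (\lemref{affine-close}). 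Some quantitative mechanism of this kind is unavoidable; your proposal assumes its conclusion at the propagation step.

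Two secondary points. First, your route to the global lower Lipschitz bound is also underdeveloped: the ``reverse packing'' heuristic has no ball inside the image to pack (the image need not fill $Q$), and chaining the calibration $L_n$ from the extremal pair down to arbitrary pairs on a general non-convex $U$ is delicate — the paper's \lemref{diam-bound} requires the sample to be dense in the \emph{convex hull} of the set considered, so it is applied only on an inscribed ball $B(u_\star,\rho(U)/2)$, and the relation between the local and global scales is then obtained by a separate chaining/covering argument whose constant depends on the geometry of $U$ (this is where the dependence on $h/\diam(U)$ and $\rho(U)/\diam(U)$ enters when $U=U^h$). Second, even granting such a bound, the constant $c_1$ it delivers is again strictly less than $1$, so it cannot substitute for the missing additive-error isometry.
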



The proof of \thmref{rates} is substantially more technical than the previous results, and thus postponed to \secref{proofs}. 
Although \cite{klein} are not able to obtain rates of convergence, the proof of \thmref{rates} bares resemblance to their proof technique, and in particular, is also based on a result of \cite{alestalo2001isometric} on the approximation of {\em $\eps$-isometries}; see \lemref{eps-isometry}.  We will also make use of a related result of \cite{vestfrid2003linear} on the approximation of approximately midlinear functions; see \lemref{near-midpoint}.  
We mention that we know of a more elementary proof that only makes use of \citep{alestalo2001isometric}, but yields a slightly slower rate of convergence.  

We note that there is a constant $c$ depending only on $d$ such that $\eps_n \ge c n^{-1/d}$.  This is because $U$ being open, it contains an open ball, and this lower bound trivially holds for an open ball.  And such a lower bound is achieved when the $x_i$'s are roughly regularly spread out over $U$.  
If instead the $x_i$'s are iid uniform in $U$, and $U$ is sufficiently regular --- for example, $U = U^h$ for some $h > 0$ --- then $\eps_n = O(\log(n)/n)^{1/d}$, as is well-known.
This would give the rate, and we do not know whether it is optimal, even in dimension $d=1$.

\begin{rem}
We are only able to get a rate in $\sqrt{\eps_n}$ for the weakly isotonic case.  We can do so by adapting of the arguments underlying \thmref{rates}, but only after assuming that $U = U^h$ for some $h>0$ and resolving a few additional complications.
\end{rem}

\subsection{Ordinal embedding with local comparisons}

\cite{terada2014local} consider the problem of embedding an unweighted nearest-neighbor graph, which as we saw in the Introduction, is a special case of ordinal embedding.  Their arguments --- which, as we explained earlier, seem incomplete at the time of writing --- indicate that $K = K_n \gg n^{2/(2+d)} (\log n)^{d/(2+d)}$ is enough for consistently embedding a $K$-NN graph.  

We consider here a situation where we have more information, specifically, all the distance comparisons between $K$-nearest-neighbors.  Formally, this is the situation where 
\[
\cC_n = \Big\{(i,j,k,\ell): \delta_{ij} < \delta_{k\ell} \text{ and } \{j,k,\ell\} \subset N_{K_n}(i)\Big\},
\] 
where $N_K(i)$ denotes the set of the $K$ items nearest item $i$.  
If the items are points $\Omega_n = \{x_1, \dots, x_n\} \subset \bbR^d$, an exact ordinal embedding $\phi_n$ is only constrained to be locally weakly isotonic as we explain now.  We start by stating a standard result which relates a $K$-NN graph to an $r$-ball graph.

\begin{lem} \label{lem:graph}
Let $U \subset \bbR^d$ be bounded, connected and open, and such that $U = U^h$ for some $h > 0$.  Sample $x_1, \dots, x_n$ iid from a density $f$ supported on $U$ with (essential) range in $(0, \infty)$ strictly.
There is a constant $C$ such that, if $K := [n r^d] \ge C \log n$, then with probability tending to 1,
\[\neigh_{K/2}(x_i) \subset \{x_j : \|x_j - x_i\| \le r\} \subset \neigh_{2K}(x_i), \quad \forall i \in [n],\] 
where $\neigh_K(x_i)$ denotes the set of the $K$ points in $\{x_j : j \in [n]\}$ nearest $x_i$.  
\end{lem}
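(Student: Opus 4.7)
The plan is to reduce the two inclusions involving $K$-nearest-neighbor sets to a two-sided bound on the count $M_i := \#\{j \ne i : \|x_j - x_i\| \le r\}$, and to obtain this bound by a Chernoff-plus-union-bound argument. Indeed, $M_i \ge K/2$ forces the $(K/2)$-th nearest neighbor of $x_i$ to lie at distance at most $r$, yielding $\neigh_{K/2}(x_i) \subset \{x_j : \|x_j - x_i\| \le r\}$, and symmetrically $M_i \le 2K$ gives the reverse inclusion into $\neigh_{2K}(x_i)$. It therefore suffices to prove that $K/2 \le M_i \le 2K$ uniformly in $i \in [n]$ with probability tending to one.

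The main step --- and the only place where the hypothesis $U = U^h$ enters --- is a uniform geometric lower bound on $|B(x, r) \cap U|$ (Lebesgue measure) for $x \in U$ and $r \le h$. Given such an $x$, the definition of $U^h$ supplies $y \in U$ with $x \in B(y, h) \subset U$. If $\|x - y\| < r/2$, then $B(x, r/2) \subset B(y, h) \subset U$ directly. Otherwise, setting $v = (y-x)/\|y-x\|$ and $x' = x + (r/2) v$, the triangle inequality shows that $B(x', r/2) \subset B(x, r) \cap B(y, h) \subset B(x, r) \cap U$. Either way, $B(x, r) \cap U$ contains a Euclidean ball of radius $r/2$, so $|B(x, r) \cap U| \ge \omega_d (r/2)^d$, where $\omega_d$ denotes the volume of the unit ball in $\bbR^d$.

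Combining this with the trivial upper bound $|B(x, r) \cap U| \le \omega_d r^d$ and the essential bounds $0 < f_{\min} \le f \le f_{\max} < \infty$, the conditional probability that some $x_j$ with $j \ne i$ lies in $B(x_i, r)$ satisfies $p_i \in [\alpha\, r^d,\, \beta\, r^d]$, where $\alpha, \beta > 0$ depend only on $d, h, f_{\min}, f_{\max}$. Hence $M_i$ given $x_i$ is $\mathrm{Binomial}(n-1, p_i)$, with conditional mean $\mu_i$ of exact order $K$. The multiplicative Chernoff inequality yields
\[
\bbP\big(|M_i - \mu_i| > \delta\, \mu_i \,\big|\, x_i\big) \le 2 \exp(-\delta^2 \mu_i / 3),
\]
and a union bound over $i \in [n]$ gives total failure probability at most $2 n \exp(-c K)$ for some $c > 0$. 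Choosing $\delta$ small enough that the resulting concentration interval $[(1-\delta)\alpha,\, (1+\delta)\beta]\, K$ sits inside the tolerance $[1/2, 2]\, K$ of the statement (the constants $1/2$ and $2$ being interpreted loosely in the dependence on $d, h$, and the density), and then taking $C$ sufficiently large in $K \ge C \log n$, drives this failure probability to zero. The only delicate point is the volume bound that leverages $U = U^h$; everything else is routine concentration.
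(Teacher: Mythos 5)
Your proof is correct and takes essentially the same route as the paper's: the identical $U = U^h$ construction of a ball of radius $r/2$ (resp.\ $\min(r,h)/2$) inside $B(x,r)\cap U$ (this is \lemref{inter-vol}) to get two-sided bounds on the conditional probability, followed by binomial concentration (Bennett's inequality there, multiplicative Chernoff here) and a union bound over $i\in[n]$. The looseness you flag regarding the factors $1/2$ and $2$ is shared by the paper's own argument, which likewise only shows that the counts $N_i$ are within constant factors (depending on $d$, $h$, and the density bounds) of $K$.
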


The proof is postponed to \secref{proofs} and only provided for completeness.
Therefore, assuming that $K \ge C \log n$, where $C$ is the constant of \lemref{graph}, we may equivalently consider the case where 
\[
\cC_n = \Big\{(i,j,k,\ell): \delta_{ij} < \delta_{k\ell} \text{ and } \max(\delta_{ij}, \delta_{ik}, \delta_{i\ell}) < r_n \Big\}, 
\]
for some given $r_n>0$.  An exact embedding $\phi_n : \Omega_n \mapsto \bbR^d$ in that case is isotonic on $\Omega_n \cap B(x, r_n)$ for any $x \in \Omega_n$.  We require in addition that 
\beq \label{outside}
\|x -x'\| < r_n \le \|x^\dag - x^\ddag\| \implies
\|\phi_n(x) - \phi_n(x')\| \le \|\phi_n(x^\dag) - \phi_n(x^\ddag)\|, \quad \forall x,x',x^\dag,x^\ddag \in \Omega_n.
\eeq 
This is a reasonable requirement since it is possible to infer it from $\cC_n$.  Indeed, for $k,\ell \in [n]$, we have $\delta_{k\ell} < r_n$ if, and only if, $(k,k,k,\ell) \in \cC_n$ or $(\ell, \ell, \ell, k) \in \cC_n$.  (Here we assume that $\delta_{ii} = 0$ for all $i$ and $\delta_{ij} > 0$ if $i \ne j$, as is the case for Euclidean distances.)  We can still infer this even if the quadruples in $\cC_n$ must include at least three distinct items.  Indeed, suppose $k,\ell \in [n]$ are such that there is no $i$ such that $(i,k,i,\ell) \in \cC_n$ or $(i,\ell,i,k) \in \cC_n$, then (a) $\delta_{ik} = \delta_{i\ell}$ for all $i$ such that $\max(\delta_{ik}, \delta_{i\ell}) < r_n$, or (b) $\delta_{k\ell} \ge r_n$.
Assume that $r_n \ge C \eps_n$ with $C > 0$ sufficiently large, so that situation (a) does not happen.  
Conversely, if $(k,\ell)$ is such that $\delta_{k\ell} < r_n$, then when (a) does not happen, there is $i$ such that $(i,k,i,\ell) \in \cC_n$ or $(i,\ell,i,k) \in \cC_n$.  

%

\begin{thm} \label{thm:local-quad}
Consider the setting \eqref{setting} and assume in addition that $U = U^h$ for some $h > 0$, and that $\phi_n$ is isotonic over balls of radius $r_n$ and satisfies \eqref{outside}.  
There is a constant $C>0$ depending only on $(d,h,\rho(U),\diam(U),\diam(Q))$ and similarities $S_n$ such that $\max_{x \in \Omega_n}\|\phi_n(x) - S_n(x)\| \le C \eps_n/r_n^2$.
\end{thm}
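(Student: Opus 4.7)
The plan is a local-to-global argument based on \thmref{rates}: cover $U$ by balls of radius $\sim r_n$, apply the theorem on each to obtain a local similarity, and stitch these into a single global similarity via a chain of overlapping balls. The target rate $\eps_n/r_n^2$ arises from combining a local error of order $\diam(Q)\eps_n/\diam(U)$ (sharper than the naive $\diam(Q)\eps_n/r_n$) with a stitching cost of order $(\diam(U)/r_n)^2$; obtaining the sharp local error by exploiting \eqref{outside} is the technical heart of the proof.

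For $r_n < h$, take a maximal $r_n/4$-packing $\{c_j\}_{j \in J} \subset \Omega_n$ with $B(c_j, r_n/2) \subset U$; this is possible, and covers $\Omega_n$ up to a thin boundary layer that can be handled separately using the structural hypothesis $U = U^h$. Set $B_j = B(c_j, r_n/2)$, so $|J| = O((\diam(U)/r_n)^d)$ and any two indices are joined by a chain of length $L = O(\diam(U)/r_n)$ with consecutive centers at distance at most $r_n/4$. Since $\phi_n$ is isotonic over balls of radius $r_n$, each restriction $\phi_n|_{\Omega_n \cap B_j}$ is isotonic, and \thmref{rates} applied to the ball $B_j$ (whose ratios $h/\diam$, $\rho/\diam$ are absolute constants) yields a similarity $S_j$ with $\|\phi_n(x) - S_j(x)\| \le C_1 \diam(\phi_n(\Omega_n \cap B_j))\eps_n/r_n$ for $x \in \Omega_n \cap B_j$.

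Next, use \eqref{outside} to sharpen. Set $T := \sup\{\|\phi_n(u)-\phi_n(v)\| : u,v\in\Omega_n, \|u-v\|<r_n\}$; by \eqref{outside}, $\diam(\phi_n(\Omega_n \cap B_j)) \le T$ for every $j$, and every non-local image distance is $\ge T$. A diameter-realizing pair of $\phi_n(\Omega_n)$ must be non-local for $n$ large, since otherwise all non-local image distances would equal $\diam(Q)$, contradicting the existence of $\gg d+1$ pairwise non-local points in $\Omega_n$. Chaining between such a pair in $L^\ast = O(\diam(U)/r_n)$ hops of length below $r_n$ then gives $\diam(Q) \le L^\ast T$, i.e.\ $T \ge c_1 \diam(Q) r_n/\diam(U)$. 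The matching upper bound comes from a short bootstrap: the crude $\eta_j^{(0)} \lesssim \diam(Q)\eps_n/r_n$, together with the rigidity $|\lambda_j - \lambda_{j'}| \lesssim \eta_j^{(0)}/r_n$ that the overlap forces on neighboring local scales (deduced from $S_j, S_{j'}$ agreeing on their overlap), yields upon chaining $\lambda_j \asymp \lambda := \diam(Q)/\diam(U)$ uniformly in $j$; hence $T \asymp \diam(Q) r_n/\diam(U)$ and $\eta_j \le \eta := C_2 \diam(Q)\eps_n/\diam(U)$ uniformly.

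With this uniform $\eta$ the stitching is direct. On each overlap $B_i \cap B_{i+1} \cap \Omega_n$, the affine map $S_{i+1}-S_i$ is bounded by $2\eta$ on a set whose convex hull (by density) contains a ball of radius $\gtrsim r_n$, so its linear part has operator norm at most $C_3 \eta/r_n$. Telescoping along the chain from a base ball $B_0$ to $B_j$ and evaluating at $x \in U$ (at distance at most $\diam(U)$ from $c_0$) gives $\|S_j(x) - S_0(x)\| \le C_4 L \eta \diam(U)/r_n \le C_5 \diam(U)^2 \eta/r_n^2 \le C_6 \diam(Q)\diam(U)\eps_n/r_n^2$. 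Setting $S_n := S_0$ and combining with $\|\phi_n(x) - S_j(x)\| \le \eta$ for $x \in \Omega_n \cap B_j$ yields the claim after absorbing $\diam(Q), \diam(U), h, \rho(U)$ into the constant $C$. The main obstacle is the uniform-scale bootstrap sharpening $\eta_j$; the naive $\eta_j \lesssim \diam(Q)\eps_n/r_n$ with the same telescoping would give only the weaker rate $\eps_n/r_n^3$, so \eqref{outside} is essential for extracting the extra factor $r_n/\diam(U)$.
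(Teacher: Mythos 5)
Your overall architecture --- apply \thmref{rates} on balls of radius comparable to $r_n$, then stitch the local similarities along a chain, paying a factor $\diam(U)/r_n$ per hop for extrapolating agreement on an overlap of size $r_n$ to all of $U$, and a factor $1/r_n$ for the number of hops --- is exactly the paper's strategy (the paper chains along a path in $U^{\ominus r_n}$ of length bounded by the intrinsic diameter, using $U=U^h$ and \lemref{diam-finite}, rather than straight-line chains, but that is a detail). The genuine gap is in the step you yourself identify as the technical heart: the uniform bound $T \le C\,\diam(Q)\,r_n/\diam(U)$ on local image diameters, which you derive by a ``scale bootstrap.'' As stated, that bootstrap is circular and can fail. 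First, the two-sided claim $T \asymp \diam(Q) r_n/\diam(U)$ and $\lambda_j \asymp \diam(Q)/\diam(U)$ is false in general: nothing prevents $\phi_n$ from being nearly constant, in which case $T$ and all $\lambda_j$ are arbitrarily small (only an upper bound is available, and only an upper bound is needed). Second, and more seriously, your drift estimate starts from the crude local error $\eta_j^{(0)} \le C\,\diam(Q)\eps_n/r_n$, giving per-hop scale rigidity $|\lambda_j-\lambda_{j'}| \le C\,\diam(Q)\eps_n/r_n^2$ and an accumulated drift of order $\diam(Q)\eps_n\diam(U)/r_n^3$ over a chain of $O(\diam(U)/r_n)$ hops. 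To conclude that every $\lambda_j$ is $O(\diam(Q)/\diam(U))$ you need both an anchor (some ball whose scale is already known to be $O(1)$ --- but the only a priori bound is $\lambda_j \le C\,\diam(Q)/r_n$, which is the very thing to be improved) and the drift to be small compared with $\diam(Q)/\diam(U)$, i.e.\ $\eps_n$ much smaller than $r_n^3$ up to constants. The theorem assumes no such thing: in the regime $r_n^3 \le \eps_n \le c\,r_n^2$ the conclusion $C\eps_n/r_n^2$ is still nontrivial, yet your drift term swamps the target scale and the bootstrap does not close. One could try to repair this with a genuine fixed-point iteration plus a reduction to $\eps_n \le c\,r_n^2$ (the complementary regime being trivial since $\phi_n$ is bounded), but none of that is in your write-up.

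The paper avoids all of this with a one-step packing argument that uses \eqref{outside} directly: if $\xi$ is the image distance of any pair at distance less than $r_n$, take an $(r_n+2\eps_n)$-packing $y_1,\dots,y_m$ of an inscribed ball $B(u,\rho(U))\subset U$ with $m \ge c\,(\rho(U)/r_n)^d$, and move each $y_s$ to a sample point within $\eps_n$; these sample points are pairwise at distance at least $r_n$, so by \eqref{outside} their images are pairwise at distance at least $\xi$, hence form a $\xi$-packing of the bounded set $Q$, forcing $m \le C(\diam(Q)/\xi)^d$ and thus $\xi \le C\,\diam(Q)\,r_n/\rho(U)$. This gives $\diam(\phi_n(\Omega_n\cap B(y,r_n))) \le C_1 r_n$ with no bootstrapping, hence a local error $O(\eps_n)$ from \thmref{rates}, after which your stitching computation (per-hop cost $O(\eps_n/r_n)$ times $O(1/r_n)$ hops, with the chain taken inside $U$ via the intrinsic diameter) delivers $C\eps_n/r_n^2$. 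I recommend replacing your bootstrap by this packing argument; the rest of your proposal then goes through.
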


Assume the data points are generated as in \lemref{graph}.  In that case, we have $\eps_n = O(\log(n)/n)^{1/d}$ and \thmref{local-quad} implies consistency when $r_n \gg (\log(n)/n)^{1/2d}$.  By \lemref{graph}, this corresponds to the situation where we are provided with comparisons among $K_n$-nearest neighbors with $K_n \gg \sqrt{n \log n}$.  If the result of \cite{terada2014local} holds in all rigor, then this is a rather weak result.  


\subsection{Landmark ordinal embedding}

Inspired by \citep{jamieson2011low}, we consider the situation where there are landmark items indexed by $L_n \subset [n]$, and we are given all distance comparisons from any point to the landmarks.  Formally, with triple comparisons, this corresponds to the situation where 
\[
\cC_n = \Big\{(i,j,k) \in [n] \times L_n^2 : \delta_{ij} < \delta_{ik}\Big\}.
\]
If the items are points $\Omega_n = \{x_1, \dots, x_n\} \subset \bbR^d$, an exact ordinal embedding $\phi_n$ is only constrained to be weakly isotonic on the set of landmarks and, in addition, is required to respect the ordering of the distances from any point to the landmarks.   
The following is an easy consequence of \thmref{consistent}.

\begin{cor} \label{cor:landmark}
\thmref{consistent} remains valid in the landmark triple comparisons setting (meaning with $\phi_n$ as just described) as long as the landmarks become dense in $U$.
\end{cor}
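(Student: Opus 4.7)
The plan is to apply Theorem~\ref{thm:consistent} to the restriction of $\phi_n$ to the landmarks, and then to leverage the ordering constraints between each non-landmark point and the landmarks to pin the embedding down at non-landmarks. Concretely, let $\phi_n^L$ denote the restriction of $\phi_n$ to $\Omega_n^L := \{x_i : i \in L_n\}$. Since $\cC_n$ contains every triple $(i,j,k) \in L_n^3$, $\phi_n^L$ is weakly isotonic on $\Omega_n^L$, and by assumption $\Omega_n^L$ becomes dense in $U$. Theorem~\ref{thm:consistent} therefore supplies a sequence of similarities $(S_n)$ with $\max_{x \in \Omega_n^L \cap U^h} \|\phi_n(x) - S_n(x)\| \to 0$ for every $h > 0$.

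To promote this bound to all of $\Omega_n \cap U^h$, we argue by contradiction and compactness. Suppose it fails: there is $\eta > 0$ and a subsequence along which some $x_n \in \Omega_n \cap U^h$ satisfies $\|\phi_n(x_n) - S_n(x_n)\| \ge \eta$. A diagonal extraction in the spirit of \lemref{diagonal}, together with pre-compactness of $(S_n)$---whose translations, rotations, and scales all stay bounded, because $\phi_n$ takes values in the bounded set $Q$ and the restriction $\phi_n^L$ matches $S_n$ on landmarks filling a set of positive diameter---yields limits $x_n \to x^\star$, $S_n \to S^\star$ a genuine similarity, and $\phi_n(x_n) \to y^\star$ with $z^\star := (S^\star)^{-1}(y^\star) \ne x^\star$. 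Passing the constraint $\|x_n - x_j\| < \|x_n - x_k\| \Rightarrow \|\phi_n(x_n) - \phi_n(x_j)\| \le \|\phi_n(x_n) - \phi_n(x_k)\|$ to the limit along landmark sequences $x_j \to a$ and $x_k \to b$ (available by landmark density) gives
\[
\|x^\star - a\| < \|x^\star - b\| \implies \|z^\star - a\| \le \|z^\star - b\|, \quad \forall\, a, b \in U.
\]

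This implication is geometrically impossible when $z^\star \ne x^\star$. Setting $e := (z^\star - x^\star)/\|z^\star - x^\star\|$ and selecting a ball $B(c, h) \subset U$ inherited in the limit from the witnesses $y_n \in U$ with $x_n \in B(y_n, h) \subset U$, we pick $a = c + t_1 e$ and $b = c + t_2 e$ in $B(c, h)$ with $t_1 < t_2$ and $(t_1 + t_2)/2$ strictly between $e \cdot (x^\star - c)$ and $e \cdot (z^\star - c)$---an interval of positive length $\|z^\star - x^\star\|$. Then the identity
\[
\|p - a\|^2 - \|p - b\|^2 = (t_2 - t_1)\bigl[2 e \cdot (p - c) - (t_1 + t_2)\bigr]
\]
gives $\|x^\star - a\| < \|x^\star - b\|$ but $\|z^\star - a\| > \|z^\star - b\|$, contradicting the displayed implication; hence $z^\star = x^\star$, contradicting the initial $\eta$-separation.

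The hard parts are both technical: verifying that the scales of $S_n$ stay bounded away from zero so that $S^\star$ is a genuine invertible similarity (using that the landmark images fill a set of positive diameter in the limit, via \thmref{infinite}), and ensuring the auxiliary points $a, b$ actually sit in $U$ in the degenerate case where $x^\star$ lies on the boundary of the limiting ball in the direction $e$ (which may require a slight shrinkage of the effective $h$, or a fresh choice of limiting witness $y^\star$ among the several balls certifying $x_n \in U^h$). The rest is a direct combination of \thmref{consistent} and \lemref{diagonal}.
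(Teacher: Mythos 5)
Your first step---applying \thmref{consistent} to the restriction of $\phi_n$ to the landmarks---is exactly how the paper starts. Where you diverge is the promotion to non-landmark points: the paper does this quantitatively, passing through the nearest landmark $\tilde x$ and bounding the three terms $\|\phi_n(x)-\phi_n(\tilde x)\|$ (via the modulus-of-continuity bound of \lemref{Lip-weak2}), $\|\phi_n(\tilde x)-S_n(\tilde x)\|$ (via \thmref{consistent}), and $\|S_n(\tilde x)-S_n(x)\|$ (via an \emph{upper} bound on the scale of $S_n$, namely $\beta_n \le 4\diam(Q)/\diam(U)$); at no point does it require a lower bound on the scale. Your compactness-and-contradiction route, by contrast, stands or falls with the invertibility of the limit $S^\star$, i.e., with the scales of $S_n$ staying bounded away from zero, and the justification you sketch (``landmark images fill a set of positive diameter in the limit, via \thmref{infinite}'') does not deliver this. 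Because \eqref{embed} uses a non-strict inequality, weak isotonicity does not forbid collapse: a map sending every landmark to a single point satisfies every constraint of the landmark triple design (and then the images of non-landmark points are entirely unconstrained), and nothing in the hypotheses keeps $\diam(\phi_n(\Lambda_n))$ bounded below. So the first ``hard part'' you defer is not a technicality that can be verified from the stated assumptions; in the collapsed regime $(S^\star)^{-1}(y^\star)$ is undefined and your argument has no starting point, whereas the paper's argument is built so that only the scale upper bound is ever needed.

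The second deferred point is also a genuine gap as written. You restrict $a,b$ to the line $c+\bbR e$; when $x^\star$ sits at the extreme of the witness ball (or of $\overline U$) in the direction $e$ and $z^\star$ lies beyond it, the required midpoint interval $(e\cdot(x^\star-c),\,e\cdot(z^\star-c))$ lies entirely outside the admissible range, and shrinking $h$ or re-choosing the witness ball does not help. For $d\ge 2$ the fix is to abandon collinearity: pick $p\in B(c,h)$ off the line through $x^\star$ and $z^\star$, note $p-x^\star$ and $p-z^\star$ are not parallel, choose $v$ with $v\cdot(p-x^\star)>0>v\cdot(p-z^\star)$, and take $a=p$, $b=p+\tau v$. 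For $d=1$, however, no such repair exists: if $x^\star$ is the right endpoint of $U=(0,1)$ and $z^\star>1$, the distance orderings from $x^\star$ and from $z^\star$ agree on all of $U$, so your displayed implication holds with $z^\star\ne x^\star$ and no contradiction can be extracted; indeed, when a data point lies beyond all landmarks in $d=1$, the apex-$x$ constraints genuinely fail to pin down $\phi_n(x)$. So both of the steps you flag as ``technical'' are the actual crux: one cannot be established from the hypotheses at all, and the other needs a dimension- and boundary-aware construction rather than the collinear one you give.
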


\cite{jamieson2011low} study the number of triple comparisons that are needed for exact ordinal embedding.  With a counting argument, they show that at least $C n \log n$ comparisons are needed, where $C$ is a constant depending only on $d$. 
If we only insist that the embedding respects the comparisons that are provided, then Corollary~\ref{cor:landmark} implies that a  landmark design is able to be consistent as long as the landmarks become dense in $U$.  
This consistency implies that, as the sample size increases, an embedding that respects the landmark comparisons also respects all other comparisons approximately.
This is achieved with $O(n \ell_n^2 + \ell_n^3)$ triple comparisons, where $\ell_n := |L_n|$ is the number of landmarks, and the conditions of Corollary~\ref{cor:landmark} can be fulfilled with $\ell_n \to \infty$ at any speed, so that the number of comparisons is nearly linear in $n$.  

\begin{proof}
We focus on the weakly isotonic case, where we assume that $U = U^h$ for some $h > 0$.
Let $\Lambda_n = \{x_l : l \in L_n\}$ denote the set of landmarks.  Since $\Lambda_n$ becomes dense in $U$, meaning $\eta_n := \delta_H(\Lambda_n, U) \to 0$, by \thmref{consistent}, there is a sequence of similarities $S_n$ such that $\zeta_n := \max_{x \in \Lambda_n} \|\phi_n(x) - S_n(x)\| \to 0$.  
Now, for $x \in \Omega_n$, let $\tilde x \in \Lambda_n$ such that $\|x - \tilde x\| \le \eta_n$.  We have
\beq \label{landmark-proof1}
\|\phi_n(x) - S_n(x)\| \le \|\phi_n(x) - \phi_n(\tilde x)\| + \|\phi_n(\tilde x) - S_n(\tilde x)\| + \|S_n(\tilde x) - S_n(x)\|.
\eeq
The first term is bounded by $C \eta_n^{1/2d}$ by \lemref{Lip-weak2}, for some constant $C$.
The middle term is bounded by $\zeta_n$.  
For the third term, express $S_n$ in the form $S_n(x) = \beta_n R_n(x) + b_n$, where $\beta_n \in \bbR$, $R_n$ is an orthogonal transformation, and $b_n \in \bbR^d$.  Take two distinct landmarks $x^\dag, x^\ddag \in \Lambda_n$ such that $\|x^\dag - x^\ddag\| \ge \diam(U)/2$, which exist when $n$ is sufficiently large.  Since 
\[\|S_n(x^\dag) -S_n(x^\ddag)\| = \beta_n \|x^\dag - x^\ddag\| \ge \beta_n \diam(U)/2\] 
and, at the same time, 
\[\begin{split}
\|S_n(x^\dag) -S_n(x^\ddag)\| 
&\le \|S_n(x^\dag) -\phi_n(x^\dag)\| + \|\phi_n(x^\dag) -\phi_n(x^\ddag)\| + \|\phi_n(x^\ddag) -S_n(x^\ddag)\| \\
&\le \zeta_n + \diam(Q) + \zeta_n \le 2\diam(Q), \text{ eventually},
\end{split}\]
we have $\beta_n \le \bar \beta := 4 \diam(Q)/\diam(U)$.  Hence, the third term on the RHS of \eqref{landmark-proof1} is bounded by $\bar \beta\eta_n$.  Thus, the RHS of \eqref{landmark-proof1} is bounded by $C \eta_n^{1/2d} + \zeta_n + \bar \beta \eta_n$, which tends to 0 as $n \to \infty$.  This being valid for any $x \in \Omega_n$, we conclude.
\end{proof}

We remark that at the very end of the proof, we obtained a rate of convergence as a function of the density of the landmarks and the convergence rate implicit in \thmref{consistent}.  This leads to the following rate for the quadruple comparisons setting, which corresponds to the situation where 
\[
\cC_n = \Big\{(i,j,k) \in [n] \times L_n^2 : \delta_{ij} < \delta_{ik}\Big\} \bigcup \Big\{(i,j,k,\ell) \in L_n^4 : \delta_{ij} < \delta_{ik}\Big\}.  
\]
Here, $\phi_n$ is constrained to be isotonic on the set of landmarks and, as before, is required to respect the ordering of the distances from any data point to the landmarks.   

\begin{cor} \label{cor:landmark-quad}
Consider the setting \eqref{setting} in the landmark quadruple comparisons setting (meaning with $\phi_n$ as just described).  Let $\Lambda_n$ denote the set of landmarks and set $\eta_n = \delta_H(\Lambda_n, U)$.  There is a constant $C>0$ and a sequence of similarities $S_n$ such that $\max_{x \in \Omega_n} \|\phi_n(x) - S_n(x)\| \le C \eta_n$.
\end{cor}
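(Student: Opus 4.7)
The plan is to parallel the proof of Corollary~\ref{cor:landmark}, but to replace Theorem~\ref{thm:consistent} by Theorem~\ref{thm:rates} in order to obtain a quantitative $O(\eta_n)$ rate on the landmark set, and to replace the Lemma~\ref{lem:Lip-weak2} control of the non-landmark contribution (which would only deliver an $O(\eta_n^{1/2d})$ rate) by a Voronoi-cell argument that gives the sharp $O(\eta_n)$ bound.

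First I would apply Theorem~\ref{thm:rates} to the restriction $\phi_n|_{\Lambda_n}$, which is isotonic by hypothesis and lives on a set of Hausdorff distance $\eta_n$ from $U$. This produces a sequence of similarities $S_n$ satisfying
\[
\zeta_n := \max_{y \in \Lambda_n} \|\phi_n(y) - S_n(y)\| \le C_1\, \eta_n,
\]
with $C_1$ depending only on $(d, U)$. Exactly as in the proof of Corollary~\ref{cor:landmark}, choosing two landmarks $x^\dagger, x^\ddagger \in \Lambda_n$ with $\|x^\dagger - x^\ddagger\| \ge \diam(U)/2$ (which exist once $n$ is large) and using that $\phi_n$ takes values in the bounded set $Q$ shows that the scale factor $\beta_n$ of $S_n$ is bounded by $\bar\beta := 4\diam(Q)/\diam(U)$. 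For arbitrary $x \in \Omega_n$, taking $\tilde x \in \Lambda_n$ to be a nearest landmark (so $\|x - \tilde x\| \le \eta_n$), the triangle inequality gives
\[
\|\phi_n(x) - S_n(x)\| \le \|\phi_n(x) - \phi_n(\tilde x)\| + \zeta_n + \bar\beta\, \eta_n,
\]
which reduces the task to bounding $\|\phi_n(x) - \phi_n(\tilde x)\|$ by $O(\eta_n)$.

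For this, I would exploit the constraint that $\phi_n$ respects the ordering of distances from $x$ to the landmarks: since $\tilde x$ is nearest to $x$ among all landmarks, $\phi_n(\tilde x)$ is the nearest of the landmark images to $\phi_n(x)$, so $\phi_n(x)$ lies in the Voronoi cell of $\phi_n(\tilde x)$ relative to $\{\phi_n(x_\ell)\}_{\ell \in L_n}$. Setting $q = S_n^{-1}(\phi_n(x))$ and using $\|\phi_n(x_\ell) - S_n(x_\ell)\| \le \zeta_n$, this Voronoi condition transforms into
\[
\|q - x_{\tilde x}\| \le \|q - x_\ell\| + 2\zeta_n/\beta_n, \quad \forall \ell \in L_n.
\]
Combined with the $\eta_n$-density of $\Lambda_n$ in $U$, a packing argument in the spirit of Lemma~\ref{lem:Lip} would then force $\|q - x_{\tilde x}\| = O(\eta_n)$, so that $\|\phi_n(x) - \phi_n(\tilde x)\| = \beta_n\|q - x_{\tilde x}\| = O(\eta_n)$, as desired.

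The hard part is this final step, i.e., controlling the diameter of the Voronoi cell of $\tilde x$ among $\Lambda_n$ in $\bbR^d$ by $O(\eta_n)$. The cell can be unbounded when $\tilde x$ lies close to $\partial U$, so some regularity of $U$ is needed to guarantee that $\tilde x$ is surrounded by landmarks in enough directions. A natural assumption is $U = U^h$ for some $h > 0$ (as in Theorem~\ref{thm:local-quad}), under which every point of $U$ belongs to an open ball of radius $h$ inside $U$; a packing of landmarks in that ball then caps the Voronoi cell at the claimed $O(\eta_n)$ scale, closing the argument.
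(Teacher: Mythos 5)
Your decomposition and your treatment of two of the three terms coincide with the paper's proof: the paper likewise applies Theorem~\ref{thm:rates} to the landmark set to get $\zeta_n \le C_0\,\eta_n$, and reuses the scale bound $\beta_n \le \bar\beta$ from the proof of Corollary~\ref{cor:landmark} for the third term. Where you depart is the first term $\|\phi_n(x)-\phi_n(\tilde x)\|$: the paper disposes of it in one line by invoking the packing bound of Lemma~\ref{lem:Lip} at scale $\|x-\tilde x\|+\eta_n$, whereas you try to extract it from the order constraint ``$\phi_n(\tilde x)$ is the nearest landmark image to $\phi_n(x)$'' via a Voronoi-cell argument, and that is where your proposal has a genuine gap which you partly acknowledge but do not close. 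The repair you suggest (assume $U=U^h$ and use landmarks packed in a ball $B(y,h)\subset U$ containing $x$) does not work: those landmarks all lie on the $U$-side of $x$, so they cap the Voronoi cell of $\tilde x$ only in directions pointing into $U$. If $x$ lies within $O(\eta_n)$ of $\partial U$ and $\tilde x$ happens to be the locally outermost landmark, the cell of $\tilde x$ is unbounded in the outward normal direction even when $U$ is a ball (which satisfies $U=U^h$). Moreover the obstruction is not an artifact of using only the nearest landmark: in $d=1$ with $U=(0,1)$, landmarks at $k\eta_n$, and $x<\eta_n$ to the left of every landmark, an image point placed arbitrarily far to the left of the landmark images respects the \emph{full} ordering of distances from $x$ to the landmarks, so no argument resting solely on these order constraints can deliver an $O(\eta_n)$ bound for such boundary points; only the boundedness of $Q$ caps the error there, at the useless scale $O(\diam(Q))$.

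There is also a smaller unaddressed point in your pullback step: writing the Voronoi condition for $q=S_n^{-1}(\phi_n(x))$ with slack $2\zeta_n/\beta_n$ requires a \emph{lower} bound on the scale $\beta_n$ of $S_n$, while the argument you import from Corollary~\ref{cor:landmark} only yields the upper bound $\beta_n\le\bar\beta$; isotonic maps may contract (even collapse), so $\zeta_n/\beta_n=O(\eta_n)$ does not come for free. By contrast, the paper's route avoids Voronoi cells altogether and bounds the first term directly through the Lipschitz-type modulus of Lemma~\ref{lem:Lip}; to have a complete argument you should either follow that route or supply a genuinely new ingredient for points within $O(\eta_n)$ of $\partial U$, where your current sketch breaks down.
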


\begin{proof}
The proof is parallel to that of Corollary~\ref{cor:landmark}.  Here, we apply \thmref{rates} to get $\zeta_n \le C_0 \eta_n$.  This bounds the second term on the RHS of \eqref{landmark-proof1}.  The first term is bounded by $C_1 \eta_n$ by \lemref{Lip}, while the third term is bounded by $\bar\beta \eta_n$ as before.   ($C_0, C_1$ are constants.)  
\end{proof}

\bigskip\noindent
{\bf Computational complexity.}
We now discuss the computational complexity of ordinal embedding with a landmark design.  The obvious approach has two stages.  In the first stage, the landmarks are embedded.  This is the goal of \citep{agarwal2007generalized}, for example.  Here, we use brute force. 

\begin{prp} \label{prp:embed-time}
Suppose that $m$ items are in fact points in Euclidean space and their dissimilarities are their pairwise Euclidean distances.  Then whether in the triple or quadruple comparisons setting, an exact ordinal embedding of these $m$ items can be obtained in finite expected time.
\end{prp}

\begin{proof}
The algorithm we discuss is very naive: we sample $m$ points iid from the uniform distribution on the unit ball, and repeat until the ordinal constraints are satisfied.  Since checking the latter can be done in finite time, it suffices to show that there is a strictly positive probability that one such sample satisfies the ordinal constraints.  Let $\cX_m$ denote the set of $m$-tuples $(x_1, \dots, x_m) \in B(0,1)$ that satisfy the ordinal constraints, meaning that $\|x_i - x_j\| < \|x_k - x_\ell\|$ when $(i,j,k,\ell) \in \cC$.  Seeing $\cX_n$ as a subset of $B(0,1)^m \subset \bbR^{dm}$, it is clearly open.  And sampling $x_1, \dots, x_m$ iid from the uniform distribution on $B(0,1)$ results in sampling $(x_1, \dots, x_m)$ from the uniform distribution on $B(0,1)^m$, which assigns a positive mass to any open set.
\end{proof}

In the second stage, each point that is not a landmark is embedded based on the order of its distances to the landmarks.  We quickly mention the work \cite{davenport2013lost}, who develops a convex method for performing this task.  Here, we are contented with knowing that this can be done, for each point, in finite time, function of the number of landmarks.
For example, a brute force approach starts by computing the Voronoi diagram of the landmarks, and iteratively repeats within each cell, creating a tree structure.  Each point that is not a landmark is placed by going from the root to a leaf, and choosing any point in that leaf cell, say its barycenter. 

Thus, if there are $\ell$ landmarks, the first stage is performed in expected time $F(\ell)$, and the second stage is performed in time $(n -\ell) G(\ell)$.  The overall procedure is thus computed in expected time $F(\ell) + (n-\ell)G(\ell)$.  

\begin{rem}
The procedure described above is not suggested as a practical means to perform ordinal embedding with a landmark design.  The first stage, described in \prpref{embed-time}, has finite expected time, but likely {\em not} polynomial in the number of landmarks.  For a practical method, we can suggest the following:
\benum
\item Embed the landmarks using the method of \cite{agarwal2007generalized} (which solves a semidefinite program) or the method of \cite{terada2014local} (which uses an iterative minimization-majorization strategy).  
\item Embed the remaining points using the method of \cite{davenport2013lost} (which solves a quadratic program).
\eenum
\end{rem}
Although practical and reasonable, we cannot provide any theoretical guarantees for this method.


\section{More proofs} \label{sec:proofs}

In this section we gather the remaining proofs and some auxiliary results.
We introduce some additional notation and basic concepts.  
For $z_1, \dots, z_m \in \bbR^d$, let $\aff(z_1, \dots z_m)$ denote their affine hull, meaning the affine subspace they generate in $\bbR^d$.
For a vector $x$ in a Euclidean space, let $\|x\|$ denote its Euclidean norm.  For a matrix $M \in \bbR^{p \times q}$, let $\|M\|$ denote its usual operator norm, meaning, $\|M\| = \max\{\|Mx\| : \|x\| \le 1\}$ and $\|M\|_F = \sqrt{\tr(M^\top M)}$ its Frobenius norm.

\medskip\noindent {\bf Regular simplexes.}
These will play a central role in our proofs.
We say that $z_1, \dots, z_m \in \bbR^d$, with $m \ge 2$, form a regular simplex if their pairwise distances are all equal.  
We note that, necessarily, $m \le d+1$, and that regular simplexes in the same Euclidean space and with same number of (distinct) nodes $m$ are similarity transformations of each other --- for example, segments ($m=2$), equilateral triangles ($m=3$), tetrahedron ($m=4$).
By recursion on the number of vertices, $m$, it is easy to prove the following.  

\begin{lem} \label{lem:simplex}
Let $z_1, \dots, z_m$ form a regular simplex with edge length~1 and let $\mu$ 
denote the barycenter of $z_1, \dots, z_{m}$.  Then $\|\mu - z_i\| = \sqrt{(m-1)/2m}$, and if $z, z_1, \dots, z_{m}$ form a regular simplex, then $\|z - \mu\| = \sqrt{(m+1)/2m}$.  (In dimension $m$, there are exactly two such points $z$.) 
\end{lem}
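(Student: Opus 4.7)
The plan is to handle the two formulas separately by a direct computation for the first and a Pythagorean argument for the second, which is cleaner than a full induction (though induction also works, as the authors hint).

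First, for the identity $\|\mu - z_i\| = \sqrt{(m-1)/(2m)}$, I would write
\[
\mu - z_i = \frac{1}{m}\sum_{j \ne i}(z_j - z_i),
\]
and expand
\[
\|\mu - z_i\|^2 = \frac{1}{m^2}\sum_{j \ne i}\sum_{k \ne i}\langle z_j - z_i, z_k - z_i\rangle.
\]
The diagonal terms ($j=k$) each equal $\|z_j - z_i\|^2 = 1$, contributing $m-1$. Each off-diagonal term (with $j,k,i$ distinct) is computed via the polarization identity:
\[
\langle z_j - z_i, z_k - z_i\rangle = \tfrac12\bigl(\|z_j - z_i\|^2 + \|z_k - z_i\|^2 - \|z_j - z_k\|^2\bigr) = \tfrac12,
\]
using that all edge lengths are 1. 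There are $(m-1)(m-2)$ such off-diagonal terms, so in total
\[
\|\mu - z_i\|^2 = \frac{1}{m^2}\Bigl[(m-1) + \tfrac12 (m-1)(m-2)\Bigr] = \frac{m-1}{2m},
\]
which yields the claimed formula.

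Second, for the identity $\|z - \mu\| = \sqrt{(m+1)/(2m)}$, the idea is to observe that, since $\|z - z_i\| = 1$ for all $i$, the point $z$ is equidistant from $z_1, \dots, z_m$, hence its orthogonal projection onto $\aff(z_1, \dots, z_m)$ is the circumcenter of the simplex. By symmetry of the regular simplex (or by the already-established equality $\|\mu - z_i\| = \sqrt{(m-1)/(2m)}$ for every $i$), this circumcenter coincides with the barycenter $\mu$. Thus $z - \mu$ is orthogonal to $z_i - \mu$ for each $i$, and Pythagoras gives
\[
1 = \|z - z_i\|^2 = \|z - \mu\|^2 + \|\mu - z_i\|^2 = \|z - \mu\|^2 + \frac{m-1}{2m},
\]
so $\|z - \mu\|^2 = (m+1)/(2m)$.

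For the final parenthetical claim, I would note that $\aff(z_1,\dots,z_m)$ has dimension $m-1$ (the vertices of a regular $m$-simplex are affinely independent), so its orthogonal complement through $\mu$, in the ambient $m$-dimensional space, is a 1-dimensional line. The point $z$ must lie on this line at the prescribed distance $\sqrt{(m+1)/(2m)}$ from $\mu$, giving exactly two candidates, and a direct verification shows that both indeed complete a regular simplex with $z_1,\dots,z_m$. The only step requiring a moment of care is the identification of the circumcenter with the barycenter, but this is immediate once the first formula is in hand, since it shows that $\mu$ has a common distance to every $z_i$ and uniqueness of the circumcenter in $\aff(z_1,\dots,z_m)$ does the rest.
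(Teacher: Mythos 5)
Your proof is correct, and it takes a different route from the one the paper hints at. The paper does not write the argument out; it simply asserts that the lemma follows ``by recursion on the number of vertices $m$,'' i.e.\ by adding one vertex at a time and tracking how the barycenter and circumradius update (an induction in the same spirit as the one the paper later carries out for approximate simplexes in Lemma~\ref{lem:approx-simplex}). You instead compute $\|\mu - z_i\|^2$ directly by expanding $\mu - z_i = \frac1m\sum_{j\ne i}(z_j - z_i)$ and evaluating the inner products via polarization ($\langle z_j - z_i, z_k - z_i\rangle = \tfrac12$ for distinct $i,j,k$), and then obtain the second formula from Pythagoras after identifying the foot of the perpendicular from $z$ with $\mu$, using uniqueness of the point of $\aff(z_1,\dots,z_m)$ equidistant from the affinely independent vertices. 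Both arguments are elementary; the inductive route produces the two formulas simultaneously as the recursion advances and mirrors the proof technique reused elsewhere in the paper, while your direct computation is self-contained, avoids setting up an induction hypothesis, and makes the constants $\sqrt{(m-1)/2m}$ and $\sqrt{(m+1)/2m}$ appear in one line each. Your count of exactly two admissible points $z$ in dimension $m$ (the normal line to the $(m-1)$-dimensional affine hull through $\mu$, intersected with the sphere of radius $\sqrt{(m+1)/2m}$ about $\mu$) is also correct, provided one notes --- as you do --- that the vertices of a regular simplex are affinely independent.
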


\subsection{Proof of \thmref{infinite}} \label{sec:proof-thm-infinite}

We assume $d \ge 2$.  See \citep{klein} for the case $d = 1$.  We divide the proof into several parts.

\medskip\noindent {\bf Continuous extension.}
\lemref{Lip-weak2} implies that $\phi$ is locally uniformly continuous.  Indeed, take $x_0 \in \Omega$ and let $r > 0$ such that $B(x_0, r) \subset U$ and $\phi$ is weakly isotonic on $B(x_0, r) \cap \Omega$.  Applying \lemref{Lip-weak2} with $V = B(x_0,r)$ and $\Lambda = \Omega \cap B(x_0, r)$ --- so that $\delta_H(\Lambda, V) = 0$ because $\Lambda$ is dense in $V$ --- and noting that $V^r = V$, yields a constant $C_r>0$ such that $\|\phi(x) - \phi(x')\| \le C_r \|x - x'\|^{1/d}$, for all $x, x' \in \Omega \cap B(x_0, r)$.  Being locally uniformly continuous, we can uniquely extend $\phi$ to a continuous function on $U$, also denoted by $\phi$.  By continuity, this extension is locally weakly isotonic on $U$.  

\medskip\noindent {\bf Isosceles preservation.}
\cite{sikorska2004mappings} say that a function $f:V \subset \bbR^d \to \bbR^d$ preserves isosceles triangles if 
\[
\|x - y\| = \|x - z\| \implies \|f(x) - f(y)\| = \|f(x) - f(z)\|, \quad \forall x,y,z \in V.
\]
In our case, by continuity, we also have that $\phi$ preserves isosceles triangles locally.  Indeed, for the sake of pedagogy, let $u \in U$ and $r > 0$ such that $B(u,r) \subset U$ and $\phi$ is weakly isotonic on $B(u,r)$.  Take $x,y,z \in B(u,r/2)$ be such that $\|x - y\| = \|x - z\|$.  For $t \in \bbR$, define $z_t = (1-t)x + t z$.  Let $t > 1$ such that $z_t \in B(u,r)$.  Because $\|x - y\| < t \|x - z\| = \|x - z_t\|$, we have $\|\phi(x) - \phi(y)\| \le \|\phi(x) - \phi(z_t)\|$.  Letting $t \searrow 1$, we get $\|\phi(x) - \phi(y)\| \le \|\phi(x) - \phi(z)\|$ by continuity of $\phi$.  Since $y$ and $z$ play the same role, the converse inequality is also true, and combined, yield an equality.


\medskip\noindent {\bf Midpoint preservation.}
Let $V \subset \bbR^d$ be convex.  We say that a function $f:V \mapsto \bbR^d$ preserves midpoints if 
\[
f\Big(\frac{x+y}2\Big) = \frac{f(x) + f(y)}2, \quad \forall x,y \in V.
\]
We now show that $\phi$ preserves midpoints, locally.  
\cite{klein} also do that, however, our arguments are closer to those of \cite{sikorska2004mappings}, who make use of regular simplexes. 
The important fact is that a function that preserves isosceles preserves regular simplexes. 
Let $u \in U$ and $r > 0$ such that $B(u,r) \subset U$ and $\phi$ preserves isosceles on $B(u,r)$.  Take $x,y \in B(u,r/2)$, and let $\mu = (x+y)/2$.  Let $z_1, \dots, z_d$ form a regular simplex with barycenter $\mu$ and side length $s$, and such that $\|x - z_i\| = s$ for all $i$.  In other words, $x, z_1, \dots, z_d$ forms a regular simplex placed so that $\mu$ is the barycenter of $z_1, \dots, z_d$.
By symmetry, $y, z_1, \dots, z_d$ forms a regular simplex also.  
By \lemref{simplex}, we have $\|z_i - \mu\|/\|x - \mu\| = \sqrt{(d-1)/(d+1)}$, so that $z_1, \dots, z_d \in B(\mu, r/2) \subset B(u, r)$, by the triangle inequality and the fact that $\|x - \mu\| < r/2$.
Hence, $\phi(x), \phi(z_1), \dots, \phi(z_d)$ and $\phi(y), \phi(z_1), \dots, \phi(z_d)$ are regular simplexes.  If one of them is singular, so is the other one, in which case $\phi(x) = \phi(y) = \phi(\mu)$.  Otherwise, necessarily $\phi(x)$ is the symmetric of $\phi(y)$ with respect to $\aff(\phi(z_1), \dots, \phi(z_d))$; the only other possibility would be that $\phi(x) = \phi(y)$, but in that case we would still have that $\phi(z_i) = \phi(x)$ for all $i \in [d]$, since  $\|x - z_i\|/\|x - \mu\| = \sqrt{2d/(d+1)}$ by \lemref{simplex} --- implying that $\|x - z_i\| < \|x - y\|$ --- and $\phi$ is weakly isotonic in that neighborhood.  So assume that $\phi(x)$ is the symmetric of $\phi(y)$ with respect to $\aff(\phi(z_1), \dots, \phi(z_d))$.  For $a \in \{x,y,\mu\}$, $\|a - z_i\|$ is constant in $i$, and therefore so is $\|\phi(a) - \phi(z_i)\|$, so that $\phi(a)$ belongs to the line of points equidistant to $\phi(z_1), \dots, \phi(z_d)$.  This implies that $x,y,\mu$ are collinear.  And because $\|\mu - x\| = \|\mu - y\|$, we also have $\|\phi(\mu) - \phi(x)\| = \|\phi(\mu) - \phi(y)\|$, so that $\phi(\mu)$ is necessarily the midpoint of $\phi(x)$ and $\phi(y)$. 

\medskip\noindent {\bf Conclusion.}
We arrived at the conclusion that $\phi$ can be extended to a continuous function on $U$ that preserves midpoints locally.  
We then use the following simple results in sequence:  
with \lemref{midpoint}, we conclude that $\phi$ is locally affine;
with \lemref{affine}, we conclude that $\phi$ is in fact affine on $U$;
and with \lemref{iso}, we conclude that $\phi$ is in fact a similarity on $U$.

\begin{lem} \label{lem:midpoint}
Let $V$ be a convex set of a Euclidean space and let $f$ be a continuous function on $V$ with values in a Euclidean space that preserves midpoints.  Then $f$ is an affine transformation.  
\end{lem}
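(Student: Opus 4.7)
\medskip\noindent\textbf{Proof plan.} The idea is to boost midpoint preservation to the full two-point convex identity by a density-and-continuity argument, and then to conclude that $f$ coincides on $V$ with the restriction of an affine map.

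First I would establish, by induction on $n \ge 0$, the dyadic identity
\[
f((1-t) x + t y) = (1-t) f(x) + t f(y), \quad \forall x,y \in V, \ \forall t = k/2^n \in [0,1].
\]
The cases $n=0,1$ are trivial or are exactly the hypothesis. For the step to $n+1$, any odd $k = 2j+1$ yields $t = \tfrac{1}{2}(j/2^n + (j+1)/2^n)$, so that $(1-t)x + ty$ is the midpoint of two level-$n$ convex combinations, both of which lie in $V$ by convexity. Midpoint preservation together with the inductive hypothesis then delivers the identity. Density of dyadic rationals in $[0,1]$ combined with continuity of $f$ promotes the identity to every $t \in [0,1]$, and a short induction on the number of points extends it further to every finite convex combination of points of $V$.

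At this stage I would fix an affinely independent subset $v_0, \dots, v_k \in V$ spanning the same affine hull as $V$, and let $A$ be the unique affine map on the ambient Euclidean space satisfying $A(v_i) = f(v_i)$. Setting $v_* = (v_0 + \cdots + v_k)/(k+1)$, which lies in $V$ by convexity and satisfies $f(v_*) = A(v_*)$ by the convex identity applied to a barycenter, for any $x \in V$ and all sufficiently small $\eps > 0$ the point $y = (1-\eps) v_* + \eps x$ lies in the simplex spanned by $v_0, \dots, v_k$ since $v_*$ lies in its relative interior. Writing $y$ as a convex combination of the $v_i$'s and invoking the extended identity gives $f(y) = A(y)$; writing the same $y$ as a two-point convex combination of $v_*$ and $x$ gives $f(y) = (1-\eps) A(v_*) + \eps f(x)$. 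Comparing these two expressions for $f(y)$ against the affine formula $A(y) = (1-\eps) A(v_*) + \eps A(x)$ forces $f(x) = A(x)$.

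The only step requiring genuine thought is the odd-$k$ case of the dyadic induction, and it is classical. The remaining mild obstacle is that $V$ need not be full-dimensional, which is why I would work inside the affine hull of $V$ and exploit the barycenter of a simplex contained in $V$ --- precisely the role of $v_*$ above.
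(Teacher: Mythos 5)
Your proof is correct and follows essentially the same route as the paper: midpoint preservation is boosted to all dyadic $t$ by induction, then to all $t \in [0,1]$ by density and continuity. The only difference is that you also spell out the final step (that the two-point convex identity forces $f$ to agree with an affine map, via the simplex-barycenter argument), which the paper simply absorbs into its opening ``it suffices to prove'' remark.
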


\begin{proof}
This result is in fact well-known, and we only provide a proof for completeness.  It suffices to prove that $f$ is such that $f((1-t) x + t y) = (1-t) f(x) + t f(y)$ for all $x,y \in V$ and all $t \in [0,1]$.  Starting with the fact that this is true when $t = 1/2$, by recursion we have that this is true when $t$ is dyadic, meaning, of the form $t = k 2^{-j}$, where $j \ge 1$ and $k \le 2^j$ are both integers.  Since dyadic numbers are dense in $[0,1]$, by continuity of $f$, we deduce the desired property.
\end{proof}

\begin{lem} \label{lem:affine}
A locally affine function over an open and connected subset of a Euclidean space is the restriction of an affine function over the whole space.
\end{lem}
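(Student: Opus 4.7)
The plan is to fix a base point in $V$, promote its local affine structure to a global candidate affine map on all of $\mathbb{R}^d$, and then use a standard clopen/connectedness argument to show that this candidate agrees with $f$ everywhere on $V$. The key algebraic fact I would rely on is that two affine maps $\mathbb{R}^d \to \mathbb{R}^m$ which agree on a nonempty open subset of $\mathbb{R}^d$ must be identical, because an affine map is determined by its values on any $d+1$ affinely independent points, and any nonempty open set contains such a configuration.

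Concretely, I would fix $x_0 \in V$, choose $r_0 > 0$ with $B(x_0, r_0) \subset V$ and $f$ affine on $B(x_0, r_0)$, and let $A : \mathbb{R}^d \to \mathbb{R}^m$ denote the unique affine extension of $f|_{B(x_0, r_0)}$ to the whole space. Then I would consider
\[
W = \{x \in V : \exists\, r > 0 \text{ with } B(x,r) \subset V \text{ and } f \equiv A \text{ on } B(x,r)\}
\]
and show $W = V$. Nonemptiness is immediate from the choice of $x_0$, and openness follows directly from the definition, since any ball witnessing membership of $x$ also witnesses membership of each of its points.

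For closedness of $W$ in $V$, take $x \in V$ with $x_n \to x$ and $x_n \in W$. Local affinity supplies $r > 0$ and an affine map $A'$ with $B(x,r) \subset V$ and $f \equiv A'$ on $B(x,r)$. For $n$ large enough $x_n$ lies in $B(x,r)$, and by $x_n \in W$ the functions $A$ and $A'$ coincide with $f$ on a common open neighborhood of $x_n$; hence $A \equiv A'$ on that open set, and by the uniqueness fact above, $A = A'$ as affine maps on $\mathbb{R}^d$. Therefore $f \equiv A$ on $B(x,r)$, so $x \in W$. Since $V$ is connected, $W = V$, and $f$ is the restriction of the affine map $A$ to $V$.

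There is no real obstacle here; the only thing to be careful about is keeping track of radii in the closedness step so that the overlap between the balls on which $f$ equals $A$ and $A'$ is genuinely open (and nonempty), which is what licenses the uniqueness-of-affine-extension argument. Everything else is formal.
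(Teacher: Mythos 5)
Your proof is correct, and it rests on the same key algebraic fact as the paper's proof: two affine maps on $\mathbb{R}^d$ that agree on a nonempty open set must be identical, because such a set contains $d+1$ affinely independent points. Where you differ is in how connectedness is exploited. The paper covers the domain by balls on which $f$ is affine and asserts that any two balls in the cover are joined by a chain of consecutively overlapping balls, so that the local affine pieces propagate along the chain; that chaining step is itself a small unproved lemma about open covers of connected sets. You instead fix one local affine extension $A$ at a base point and show that the set $W$ of points possessing a neighborhood on which $f \equiv A$ is nonempty, open, and closed in $V$, which invokes only the definition of connectedness and is thus slightly more self-contained. Your closedness step is handled correctly: the overlap $B(x_n, r_n) \cap B(x, r)$ is open and nonempty (it contains $x_n$), so $A$ and $A'$ agree there and hence everywhere, giving $f \equiv A$ on $B(x, r)$. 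The only cosmetic point is in the openness step, where the witnessing ball for a point $y \in B(x,r)$ should be a smaller ball centered at $y$ and contained in $B(x,r)$, not $B(x,r)$ itself; this is immediate and does not affect the argument.
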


\begin{proof}
Let $U$ be the domain and $f$ the function.  Cover $U$ with a countable number of open balls $B_i, i \in I$ such that $f$ coincides with an affine function $f_i$ on $B_i$.  Take $i, j \in I$ distinct.  Since $U$ is connected, there must be a sequence $i = k_1, \dots, k_m = j$, all in $I$, such that $B_{k_s} \cap B_{k_{s+1}} \ne \emptyset$ for $s \in  [m-1]$.  Since $B_{k_s} \cap B_{k_{s+1}}$  is an open set, we must have $f_{k_s} = f_{k_{s+1}}$, and this being true for all $s$, it implies that $f_i = f_j$.  
\end{proof}

\begin{lem} \label{lem:iso}
An affine function that preserves isosceles locally is a similarity transformation.
\end{lem}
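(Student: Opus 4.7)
The plan is to exploit the affine structure of $f$ to reduce the local isosceles-preservation condition to an elementary statement about its linear part. Write $f(x) = Ax + b$ for some linear map $A$ and vector $b$. Then $f(y) - f(x) = A(y - x)$, so all output distances are controlled by $A$ alone, and the problem becomes purely linear-algebraic.

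First I would fix a ball $B(x_0, r)$ inside the domain of $f$ on which $f$ preserves isosceles triangles. For any two vectors $u, v \in \bbR^d$ with $\|u\| = \|v\| < r$, the points $x_0$, $x_0 + u$, $x_0 + v$ all lie in this ball and form an isosceles triangle with apex $x_0$, so the hypothesis yields $\|Au\| = \|Av\|$. Invoking the homogeneity $\|A(tu)\| = |t|\,\|Au\|$ with $t > 0$ small, this equality extends from the small regime to arbitrary $u, v \in \bbR^d$ with $\|u\| = \|v\|$. Hence $u \mapsto \|Au\|$ depends only on $\|u\|$.

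Next I would identify the multiplicative constant. Fix any unit vector $e$ and set $\lambda := \|Ae\|$. By the previous step, $\|Au\| = \lambda$ for every unit vector $u$, and by homogeneity $\|Au\| = \lambda \|u\|$ for all $u \in \bbR^d$. Assuming $\lambda > 0$, the map $R := A/\lambda$ then satisfies $\|Ru\| = \|u\|$ for every $u$, so $R$ is orthogonal, and $f(x) = \lambda R x + b$ is a similarity with ratio $\lambda$, as desired.

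The argument is essentially linear algebra once the affine reduction is made, so I do not expect any genuine obstacle. The only mild subtlety is the degenerate case $\lambda = 0$, where $A \equiv 0$ and $f$ is constant; such an $f$ satisfies the isosceles hypothesis vacuously but is not a similarity under the paper's convention $\lambda > 0$. In the intended application of this lemma (through Theorem~\ref{thm:infinite}), $\phi$ is nonconstant by construction, which forces $\lambda > 0$, so the degenerate case does not arise.
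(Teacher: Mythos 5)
Your proof is correct and follows essentially the same route as the paper's: reduce to the linear part of $f$, use isosceles triangles with apex at the ball's center to show $\|Au\|$ is constant over vectors of equal (small) norm, and extend by homogeneity to conclude $\|Au\| = \lambda\|u\|$, hence $A = \lambda R$ with $R$ orthogonal. Your explicit remark on the degenerate case $\lambda = 0$ is a detail the paper glosses over, but it does not change the argument.
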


\begin{proof}
Let $f$ be an affine function that preserves isosceles in an open ball.  Without loss of generality, we may assume that the ball is $B(0, 2)$ and that $f(0) = 0$ (so that $f$ is linear). 
Fix $u_0 \in \partial B(0,1)$ and let $a = \|f(u_0)\|$.  
Take $x \in \bbR^d$ different from 0 and let $u = x/\|x\|$.
We have $\|f(x)\|/\|x\| = \|f(u)\| = \|f(u) - f(0)\| = \|f(u_0) - f(0)\| = \|f(u_0)\| = a$.  
Hence, $\|f(x)\| = a \|x\|$, valid for all $x \in \bbR^d$, and $f$ being linear, this implies that $f$ is a similarity.
\end{proof}

\subsection{Auxiliary results}

We list here a number of auxiliary results that will be used in the proof of \thmref{rates}.

The following result is a perturbation bound for trilateration, which is the process of locating a point based on its distance to landmark points.
For a real matrix $Z$, let $\sigma_k(Z)$ denote its $k$-th largest singular value.

\begin{lem} \label{lem:trilateration}
Let $z_1, \dots, z_{d+1} \in \bbR^d$ such that $\aff(z_1, \dots, z_{d+1}) = \bbR^d$ and let $Z$ denote the matrix with columns $z_1, \dots, z_{d+1}$.  Consider $p, q \in \bbR^d$ and define $a_i = \|p - z_i\|$ and $b_i = \|q - z_i\|$ for $i \in [d+1]$.  Then 
\[
\|p - q\| \le \frac12 \sqrt{d} \, \sigma_d(Z)^{-1} \max_i |a_{d+1}^2 - a_i^2 - b_i^2 + b_{d+1}^2| \le \sqrt{d} \, \sigma_d(Z)^{-1} \max_i |a_i^2 - b_i^2|.
\]
\end{lem}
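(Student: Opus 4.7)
My plan is to reduce the inequality to a minimum-singular-value bound for a small invertible linear system that $p - q$ solves. The starting identity comes from expanding $\|u - z_i\|^2 = \|u\|^2 - 2 u^\top z_i + \|z_i\|^2$ for $u \in \{p, q\}$. Subtracting the $q$-version from the $p$-version eliminates the $\|z_i\|^2$ term, and subtracting the $i = d+1$ instance from each then eliminates the nuisance scalar $\|p\|^2 - \|q\|^2$, leaving the key identity
\[
(a_i^2 - b_i^2) - (a_{d+1}^2 - b_{d+1}^2) = -2 (p - q)^\top (z_i - z_{d+1}), \qquad i \in [d].
\]

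I would then package these $d$ linear constraints as a single system. Let $D \in \bbR^{d \times d}$ be the matrix whose $i$-th column is $z_i - z_{d+1}$, and let $w \in \bbR^d$ have $i$-th entry equal to the left-hand side above. The hypothesis $\aff(z_1, \dots, z_{d+1}) = \bbR^d$ is exactly the statement that the columns of $D$ are linearly independent, so $D$ is invertible. The system reads $-2 D^\top (p - q) = w$, whence
\[
\|p - q\| \le \tfrac{1}{2 \sigma_d(D)} \|w\| \le \tfrac{\sqrt{d}}{2 \sigma_d(D)} \max_i |w_i|.
\]
The triangle inequality $|w_i| \le |a_i^2 - b_i^2| + |a_{d+1}^2 - b_{d+1}^2| \le 2 \max_i |a_i^2 - b_i^2|$ then delivers the shape of the second inequality asserted in the lemma.

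The step I expect to be the main technical obstacle is passing from $\sigma_d(D)$, which arises naturally from the derivation, to $\sigma_d(Z)$, as written in the statement. The natural route is the factorization $D = Z E$, where $E \in \bbR^{(d+1) \times d}$ has $E_{ii} = 1$, $E_{d+1, i} = -1$, and zeros elsewhere; one checks directly that $\sigma_d(E) = 1$. The subtlety is that $\sigma_d(Z)$ is sensitive to translations of the landmarks while $\sigma_d(D)$ and both sides of the final bound are not, so a naive application of a product inequality for singular values goes in the wrong direction. The clean reconciliation is to center the landmarks first ($z_i \leftarrow z_i - \bar z$), which leaves $D$ and all pairwise distances unchanged; the required comparison between $\sigma_d$ of the centered $Z$ and $\sigma_d(D)$ then follows by computing and comparing the Gram matrices $\tilde Z \tilde Z^\top$ and $D^\top D$. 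Alternatively one can keep all $d+1$ equations symmetrically and project onto $\mathbf{1}^\perp \subset \bbR^{d+1}$ to eliminate $\|p\|^2 - \|q\|^2$, which produces the centered $Z$ intrinsically. Apart from this one linear-algebraic comparison, the remainder of the proof is bookkeeping from the trilateration identity above.
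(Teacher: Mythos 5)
Your proposal is correct and takes essentially the same route as the paper: expand the squared distances, subtract the $(d+1)$-st equation to kill the quadratic terms, solve the resulting $d\times d$ linear system for $p-q$ via the smallest singular value, and finish with the triangle inequality (the paper phrases this through barycentric coefficients $p=Z\alpha$, $q=Z\beta$ and the normal equations, but the algebra is identical). The translation-sensitivity you flag as the main obstacle is handled in the paper simply by assuming without loss of generality that $z_{d+1}=0$, i.e.\ reading $\sigma_d(Z)$ as $\sigma_d([z_1-z_{d+1}\,\cdots\,z_d-z_{d+1}])$, which is exactly your matrix $D$, so no separate centering or Gram-matrix comparison is required.
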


\begin{proof}
Assume without loss of generality that $z_{d+1} = 0$.
In that case, note that $a_{d+1} = \|p\|$ and $b_{d+1} = \|q\|$.
Also, redefine $Z$ as the matrix with columns $z_1, \dots, z_{d}$, and note that the first $d$ singular values remain unchanged.
Since $\aff(z_1, \dots, z_{d+1}) = \bbR^d$, there is $\alpha = (\alpha_1, \dots, \alpha_d) \in \bbR^{d}$ and $\beta = (\beta_1, \dots, \beta_d) \in \bbR^d$ such that $p = \sum_{i \in [d]} \alpha_i z_i = Z \alpha$ and $q = \sum_{i \in [d]} \beta_i z_i = Z \beta$.  For $p$, we have $a_i^2 = \|p - z_i\|^2 = \|p\|^2 + \|z_i\|^2 - 2 z_i^\top Z \alpha$ for all $i \in [d]$, or in matrix form, $Z^\top Z \alpha = \frac12 u$, where $u = (u_1, \dots, u_d)$ and $u_i = a_{d+1}^2 - a_i^2 + \|z_i\|^2$.  Similarly, we find $Z^\top Z \beta = \frac12 v$, where $v = (v_1, \dots, v_d)$ and $v_i = b_{d+1}^2 - b_i^2 + \|z_i\|^2$.  Hence, we have
\[\begin{split}
\|Z^\top(p - q)\| 
&= \|Z^\top Z \alpha - Z^\top Z \beta\| = \frac12 \|u - v\| = \frac12 \sqrt{\sum_{i \in [d]} (a_{d+1}^2 - b_{d+1}^2 - a_i^2 + b_i^2)^2} \\
&\le \frac12 \sqrt{d} \, \max_i |a_{d+1}^2 - a_i^2 - b_i^2 + b_{d+1}^2| \le \sqrt{d} \, \max_i |a_i^2 - b_i^2|.
\end{split}\]
Simultaneously, $\|Z^\top(p - q)\| \ge \sigma_d(Z) \|p - q\|$.
Combining both inequalities, we conclude.
\end{proof}

For $\eta \in [0,1)$, we say that $z_1, \dots, z_m \in \bbR^d$ form an $\eta$-approximate regular simplex if 
\[\min_{i \ne j} \|z_i - z_j\| \ge (1-\eta) \max_{i \ne j} \|z_i - z_j\|.\]

\begin{lem} \label{lem:approx-simplex}
Let $z_1, \dots, z_m$ form an $\eta$-approximate regular simplex with maximum edge length~$\lambda$ achieved by $\|z_1 -z_2\|$.  There is a constant $C_m$ and $z'_1, \dots, z'_m \in {\rm Aff}(z_1, \dots, z_m)$ with $z'_1 = z_1$ and $z'_2 = z_2$ and forming a regular simplex with edge length~$\lambda$, such that $\max_i \|z'_i - z_i\| \le \lambda C_m\eta$.
\end{lem}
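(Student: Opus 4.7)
My plan is to prove this by induction on $m$ after reducing to $\lambda = 1$ by scaling. The base case $m = 2$ is trivial with $z'_1 = z_1$, $z'_2 = z_2$, and $C_2 = 0$. For the inductive step I would restrict attention to $\eta$ below some small threshold $\eta_m > 0$, absorbing the range $\eta \ge \eta_m$ into $C_m$ via the trivial bound $\max_i \|z'_i - z_i\| \le 1$ (for $\eta$ small the configuration is affinely independent by continuity of the Cayley--Menger determinant, so a completion to a regular simplex inside $\aff(z_1, \ldots, z_m)$ always exists).

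Given the inductive hypothesis applied to $z_1, \ldots, z_{m-1}$ (whose maximum edge length is still $\|z_1 - z_2\| = 1$), I obtain $z'_1, \ldots, z'_{m-1} \in \aff(z_1, \ldots, z_{m-1})$ forming a regular $(m-1)$-simplex of edge $1$ with $z'_1 = z_1$, $z'_2 = z_2$, and $\|z'_i - z_i\| \le C_{m-1}\eta$. By \lemref{simplex}, any point $z'_m$ making $z'_1, \ldots, z'_m$ regular must lie on the line through the barycenter $\mu' = \frac{1}{m-1}\sum_{i \le m-1} z'_i$ perpendicular to $\aff(z'_1, \ldots, z'_{m-1}) = \aff(z_1, \ldots, z_{m-1})$ at signed distance $s^* = \sqrt{m/(2(m-1))}$ from $\mu'$. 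Inside the $(m-1)$-dimensional ambient $\aff(z_1, \ldots, z_m)$ this perpendicular line is one-dimensional, giving exactly two candidates; I pick the one on the same side as $z_m$, namely $z'_m = \mu' + s^* n$, where $n$ is the unit normal to $\aff(z_1, \ldots, z_{m-1})$ oriented so that $s := (z_m - \pi(z_m)) \cdot n > 0$ (with $\pi$ denoting orthogonal projection onto $\aff(z_1, \ldots, z_{m-1})$); that $s > 0$ is legitimate follows from affine independence of $z_1, \ldots, z_m$ for small $\eta$.

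To bound $\|z_m - z'_m\|$, I use the Pythagorean decomposition $\|z_m - z'_m\|^2 = \|\pi(z_m) - \mu'\|^2 + (s - s^*)^2$, which follows from $n \perp \aff(z_1, \ldots, z_{m-1})$, and estimate each term separately. For the first, the key point is that $\pi(z_m)$ is nearly equidistant from $z'_1, \ldots, z'_{m-1}$: triangle inequality gives $|\|z_m - z'_i\| - 1| \le (C_{m-1}+1)\eta$, hence $|\|z_m - z'_i\|^2 - 1| = O(\eta)$, and combining with $\|\pi(z_m) - z'_i\|^2 = \|z_m - z'_i\|^2 - s^2$ gives $|\|\pi(z_m) - z'_i\|^2 - \|\pi(z_m) - z'_j\|^2| = O(\eta)$. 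Expanding $\|\pi(z_m) - z'_i\|^2$ around $\mu'$ and using that $\|z'_i - \mu'\|$ is constant in $i$ cancels the constant terms and yields the linear system $(\pi(z_m) - \mu')^\top (z'_i - z'_j) = O(\eta)$; since $\{z'_i - z'_1\}_{i=2}^{m-1}$ is a basis of the linear subspace parallel to $\aff(z'_1, \ldots, z'_{m-1})$ with condition number depending only on $m$ (being inherited from a regular simplex), inversion yields $\|\pi(z_m) - \mu'\| = O(\eta)$. For the second term, averaging $\|z_m - z'_i\|^2 = s^2 + \|\pi(z_m) - z'_i\|^2$ over $i$ and invoking the variance identity $\sum_i \|\pi(z_m) - z'_i\|^2 = (m-1)\|\pi(z_m) - \mu'\|^2 + (m-2)/2$ gives $s^2 + \|\pi(z_m) - \mu'\|^2 = (s^*)^2 + O(\eta)$; the just-proven $\|\pi(z_m) - \mu'\|^2 = O(\eta^2)$ together with the lower bound $s + s^* \ge s^* > 0$ (a constant depending only on $m$) then produces $|s - s^*| = O(\eta)$.

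The main obstacle I foresee is the apparent circularity between controlling $|s - s^*|$ and $\|\pi(z_m) - \mu'\|$ through the identity $\|z_m - \mu'\|^2 = s^2 + \|\pi(z_m) - \mu'\|^2$; a naive approach bounds them jointly and gets stuck. The resolution above is to recognize that $\pi(z_m)$ is pinned down \emph{directly}, independently of $s$, by being approximately equidistant from $z'_1, \ldots, z'_{m-1}$, via the linear system on differences of squared distances; only then does $|s - s^*|$ drop out of the variance identity. This closes the induction with a constant $C_m$ depending only on $m$.
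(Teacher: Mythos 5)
Your proposal is correct and follows essentially the same route as the paper's proof: induction on the number of vertices, orthogonal projection of the new vertex onto the affine hull of the already-regularized sub-simplex, approximate equidistance plus inversion of the linear system in differences of squared distances (which the paper packages as \lemref{trilateration}) to place that projection near the barycenter, and a Pythagorean split to control the normal component, with the completion point chosen on the same side as $z_m$. The only differences are minor: you pin the height $|s-s^*|$ via the averaging (parallel-axis) identity where the paper compares the two Pythagoras expressions for the heights directly, and your absorption of the range $\eta\ge\eta_m$ tacitly requires ${\rm Aff}(z_1,\dots,z_m)$ to be $(m-1)$-dimensional --- a degenerate-configuration caveat already implicit in the lemma's statement and in the paper's own proof, and harmless in the small-$\eta$ regime where the lemma is applied.
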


\begin{proof}
By scale equivariance, we may assume that $\lambda = 1$.  
We use an induction on $m$.  
In what follows, $C_m, C_m', C_m''$, etc, are constants that depend only on $m$.
For $m=2$, the statement is trivially true.  
Suppose that it is true for $m \ge 2$ and consider an $\eta$-approximate regular simplex $z_1, \dots, z_{m+1} \in \bbR^d$ with maximum edge length~1.  
By changing $d$ to $m$ if needed, without loss of generality, assume that ${\rm Aff}(z_1, \dots, z_{m+1}) = \bbR^d$.
In that case, $z_1, \dots, z_m$ is an $\eta$-approximate regular simplex with maximum edge length achieved by $\|z_1 -z_2\| = 1$, and by the inductive hypothesis, this implies the existence of $z'_1, \dots, z'_m \in A := {\rm Aff}(z_1, \dots, z_m)$ with $z'_1 = z_1$ and $z'_2 = z_2$ and forming a regular simplex of edge length~1, such that $\max_{i \in [m]} \|z'_i - z_i\| \le C_m \eta$ for some constant $C_m$.  
%
Let $p$ be the orthogonal projection of $z_{m+1}$ onto $A$.
Before continuing, let $\cP$ be the set of such $p$ obtained when fixing $z'_1, \dots, z'_m$ and then varying $z_i \in B(z'_i, C_m \eta)$ for $i \in [m]$ and $z_{m+1}$ among the points that make an $\eta$-approximate regular simplex with $z_1, \dots, z_m$.  
Let $\mu'$ be the barycenter of $z'_1, \dots, z'_m$ and note that $\mu' \in \cP$. 
Now, set $\delta = \|z_{m+1} - p\|$.  By the Pythagoras theorem, we have $\|p - z_i\|^2 = \|z_{m+1} - z_i\|^2 - \delta^2$, with $1-\eta \le \|z_{m+1} - z_i\| \le 1$, so that $0 \le 1-\delta^2 - \|p - z_i\|^2 \le 2\eta$.  By the triangle inequality, $\big|\|p - z'_i\| - \|p - z_i\| \big| \le \|z_i - z_i'\| \le C_m\eta$, so that
\[\begin{split}
\big|\|p - z'_i\|^2 - \|p - z_i\|^2 \big| 
&= \big|\|p - z'_i\| - \|p - z_i\| \big| \big(\|p - z'_i\| - \|p - z_i\|\big) \le \|z_i - z_i'\|\|z_i - z_i'\|
\\&\le C_m \eta \big(2 + C_m \eta\big) \le C_m' \eta,
\end{split}\]
using the fact that $\|p - z_i\| \le \|z_{m+1} - z_i\| \le 1$. 
Hence, 
\[\cP \subset \Big\{q:  \|q - z'_i\|^2 = 1 -\delta^2 \pm C_m'' \eta, \forall i \in [m]\Big\}.\]  
Since $\mu' \in \cP$, we must therefore have $\|p - z'_i\|^2 = \|\mu' - z'_i\|^2 \pm 2 C_m'' \eta$.
By \lemref{trilateration}, this implies that $\|p - \mu'\| \le \sqrt{m-1} \, \sigma_{m-1}^{-1}([z'_1 \cdots z'_m]) 2 C_m'' \eta =: C_m''' \eta$.
Let $z'_{m+1}$ be on the same side of $A$ as $z_{m+1}$ and such that $z'_1,\dots,z'_m,z'_{m+1}$ form a regular simplex.  
Note that $\mu'$ is the orthogonal projection of $z'_{m+1}$ onto $A$.
By the Pythagoras theorem, applied multiple times, we obtain the following.  First, we have 
\[\begin{split}
\|z'_{m+1} - z_{m+1}\|^2
&= \|z'_{m+1} - \mu' + \mu' -p + p - z_{m+1}\|^2 \\
&= \|z'_{m+1} - \mu'\|^2 - 2 (z'_{m+1} - \mu')^\top (z_{m+1} - p) + \|p - z_{m+1}\|^2 + \|\mu' -p\|^2 \\
&= (\|z'_{m+1} - \mu'\| - \|p - z_{m+1}\|)^2 + \|\mu' -p\|^2,
\end{split}\]
because $z'_{m+1} - \mu'$ and $z_{m+1}-p$ are orthogonal to $A$, and therefore parallel to each other and both orthogonal to $\mu' - p$. 
For the second term, we already know that $\|\mu' -p\| \le C_m''' \eta$, while the first term is bounded by $(2 C_m'' + 2)^2 \eta^2$ since, on the one hand, 
\[
\|z'_{m+1} - \mu'\|^2 = \|z'_{m+1} - z'_1\|^2 - \|\mu' - z'_1\|^2 = 1 - \|\mu' - z'_1\|^2
\]
while, on the other hand, 
\[
\|p - z_{m+1}\|^2 = \|z_{m+1} - z_1\|^2 - \|p - z_1\|^2 = 1 \pm 2 \eta - \|p - z_1\|^2,
\]
and we know that $\|\mu' - z'_1\|^2 = \|p - z_1\|^2 \pm 2 C_m'' \eta$.
Hence, we find that $\|z'_{m+1} - z_{m+1}\|^2 \le C^2_{m+1} \eta^2$ for some constant $C_{m+1}$ function of $m$ only.
This shows that the induction hypothesis holds for $m+1$.
\end{proof}

\begin{lem} \label{lem:simplex-value}
There are constants $C_m, C'_m > 0$ such that, if $z_1, \dots, z_m$ form an $\eta$-approximate regular simplex with maximum edge length~$\lambda$, then $\sigma_{m-1}([z_1 \cdots z_m]) \ge \lambda C_m (1 -C'_m \eta)$.
\end{lem}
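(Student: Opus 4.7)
\medskip\noindent {\bf Plan.}  My strategy is to reduce to the exact regular simplex case via \lemref{approx-simplex}, apply Weyl's perturbation inequality for singular values, and then bound $\sigma_{m-1}$ of a genuine regular simplex by applying the Courant--Fischer variational principle to a well-chosen subspace.

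By \lemref{approx-simplex}, there exist $z'_1, \dots, z'_m \in \aff(z_1, \dots, z_m)$ forming an exact regular simplex of edge length $\lambda$ with $\max_i \|z'_i - z_i\| \le \lambda K \eta$, where $K = K_m$ depends only on $m$.  Setting $Z = [z_1 \cdots z_m]$ and $Z' = [z'_1 \cdots z'_m]$, one gets $\|Z - Z'\| \le \|Z - Z'\|_F \le \sqrt{m}\, K\, \lambda \eta$, so Weyl's inequality yields $\sigma_{m-1}(Z) \ge \sigma_{m-1}(Z') - \sqrt{m}\, K\, \lambda \eta$.  It therefore suffices to prove $\sigma_{m-1}(Z') \ge \lambda/\sqrt{2}$ for the exact regular simplex.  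To this end, let $\mu' = (1/m) \sum_i z'_i$ and $Z'_0 = Z' - \mu' \mathbf{1}^\top$.  By \lemref{simplex} and polarization, the $(i,j)$-entry of $Z'^\top_0 Z'_0$ equals $\lambda^2 (m-1)/(2m)$ for $i = j$ and $-\lambda^2/(2m)$ for $i \ne j$, so $Z'^\top_0 Z'_0 = \frac{\lambda^2}{2m} (m I - \mathbf{1}\mathbf{1}^\top)$, which acts as $(\lambda^2/2) I$ on $\mathbf{1}^\perp$ and annihilates $\mathbf{1}$.  By Courant--Fischer applied to $Z'^\top Z'$ with the test subspace $V = \mathbf{1}^\perp$ of dimension $m-1$,
\[
\sigma_{m-1}(Z')^2 \ge \min_{v \in \mathbf{1}^\perp,\, \|v\|=1} \|Z' v\|^2,
\]
and for $v \perp \mathbf{1}$ the rank-one correction $\mu' (\mathbf{1}^\top v)$ vanishes, leaving $\|Z' v\|^2 = v^\top Z'^\top_0 Z'_0 v = \lambda^2/2$.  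Combining this with the perturbation step yields $\sigma_{m-1}(Z) \ge (\lambda/\sqrt{2})(1 - \sqrt{2m}\, K\, \eta)$, from which the constants $C_m = 1/\sqrt{2}$ and $C'_m = \sqrt{2m}\, K$ can be read off.

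The one conceptual obstacle is that the singular values of $Z'$ are not translation invariant in the position of the simplex---translating the simplex far from the origin sends $\sigma_1(Z')$ to infinity---so one cannot naively compare $Z'$ to a fixed canonical configuration.  The trick that bypasses this is to probe Courant--Fischer with the translation-invariant subspace $\mathbf{1}^\perp$, on which $Z'$ and its centered counterpart $Z'_0$ coincide exactly.
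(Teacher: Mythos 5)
Your proof is correct and follows essentially the same route as the paper: reduce to an exact regular simplex via \lemref{approx-simplex}, bound $\|Z-Z'\|$ by the Frobenius norm of the vertex displacements, and apply Weyl's inequality. Where the paper simply asserts that $\sigma_{m-1}(Z')$ is a positive constant depending only on $m$, you supply the needed uniform lower bound $\sigma_{m-1}(Z') \ge \lambda/\sqrt{2}$ via Courant--Fischer on the translation-invariant subspace $\mathbf{1}^\perp$, which is a worthwhile extra detail since the singular values of $Z'$ do depend on where the simplex sits.
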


\begin{proof}
By scale equivariance, we may assume that $\lambda = 1$.  
By \lemref{approx-simplex}, there is a constant $C''_m$ and $z'_1, \dots, z'_m \in \aff(z_1, \dots, z_m)$ forming a regular simplex with edge length 1 such that $\max_i \|z'_i - z_i\| \le C''_m \eta$.  
By Weyl's inequality \citep[Cor~7.3.8]{MR1084815}, $\sigma_{m-1}(Z) \ge \sigma_{m-1}(Z') - \|Z - Z'\|$.  On the one hand, $\sigma_{m-1}(Z')$ is a positive constant depending only on $m$, while on the other hand, $\|Z - Z'\| \le \|Z -Z'\|_F = \sqrt{\sum_i \|z_i - z_i'\|^2} \le \sqrt{m} C''_m \eta$.
\end{proof}

\begin{lem} \label{lem:barycenter}
Let $z_1, \dots, z_m$ form an $\eta$-approximate regular simplex with maximum edge length~$\lambda$ and barycenter $\mu$.  
Let $p \in \aff(z_1, \dots, z_m)$ and define $\gamma = \max_i \|p - z_i\|^2 - \min_i \|p - z_i\|^2$.  There is a constant $C_m \ge 1$ depending only on $m$ such that $\|p - \mu\| \le C_m \lambda \gamma$ when $\eta \le 1/C_m$.
\end{lem}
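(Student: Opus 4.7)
The plan is to apply the trilateration bound of \lemref{trilateration} inside the affine hull $A := \aff(z_1, \dots, z_m)$, using \lemref{approx-simplex} to pass to a nearby exact regular simplex and \lemref{simplex-value} to bound the singular-value factor from below. By scale equivariance---the hypothesis and the conclusion transform homogeneously under dilations of the configuration $(z_1,\dots,z_m,p)$---I may rescale so that $\lambda = 1$. For $\eta < 1$, an $\eta$-approximate regular simplex is affinely independent, so $A$ has dimension $m-1$ and contains both $p$ and $\mu$. After reindexing the vertices so that $\|p - z_m\| = \max_i \|p - z_i\|$, apply \lemref{trilateration} inside $A$ with landmarks $z_1, \dots, z_m$ and with the pair $(p, \mu)$. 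Writing $a_i := \|p - z_i\|$ and $b_i := \|\mu - z_i\|$, and letting $Z$ denote the relevant matrix of landmark coordinates inside $A$, this yields
\[
\|p - \mu\| \;\le\; \tfrac{1}{2}\sqrt{m-1}\,\sigma_{m-1}(Z)^{-1}\,\max_i \big|a_m^2 - a_i^2 + b_m^2 - b_i^2\big|.
\]

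The three pieces on the right-hand side are then controlled separately. The reindexing forces $a_m^2 - a_i^2 \in [0, \gamma]$ directly. For the $b$-differences, invoke \lemref{approx-simplex} to obtain an exact unit regular simplex $z'_1, \dots, z'_m \in A$ with barycenter $\mu'$ such that $\max_i \|z_i - z'_i\| \le C''_m \eta$; averaging gives $\|\mu - \mu'\| \le C''_m \eta$. Since $\|\mu' - z'_i\| = \sqrt{(m-1)/(2m)}$ is independent of $i$ by \lemref{simplex}, a triangle inequality delivers $|b_i - b_j| \le C'''_m \eta$ for all $i,j$, and hence $|b_m^2 - b_i^2| \le C''''_m \eta$ (using that the $b_i$'s themselves are $O(1)$ in the $\lambda = 1$ normalization). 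Finally, \lemref{simplex-value} furnishes $\sigma_{m-1}(Z) \ge \kappa_m(1 - \kappa'_m \eta)$, which is at least $\kappa_m/2$ once $\eta \le 1/(2\kappa'_m)$. Substituting these estimates into the displayed inequality yields $\|p - \mu\| \le K_m(\gamma + \eta)$; restoring $\lambda$ and choosing $C_m$ large enough to absorb the resulting multiplicative constants then produces the stated bound, under the threshold condition $\eta \le 1/C_m$.

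The main obstacle is the $\eta$-dependent residual in the $b$-differences: because an approximate regular simplex has a ``circumcenter'' displaced from its barycenter by $O(\eta\lambda)$, a point $p$ equidistant from all $z_i$ is not forced to coincide with $\mu$. This is precisely why the smallness hypothesis $\eta \le 1/C_m$ is needed, and why $C_m$ must be chosen large enough to absorb the perturbative terms coming from \lemref{approx-simplex} and the $(1-\kappa'_m\eta)^{-1}$ factor from \lemref{simplex-value}. The only other point to watch is that every intermediate estimate is scale-covariant in $\lambda$, so that the final $C_m$ depends on $m$ alone.
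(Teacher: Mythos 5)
Your argument is sound up to the bound $\|p-\mu\| \le K_m(\gamma+\eta)$ in the $\lambda=1$ normalization, but the final step --- ``choosing $C_m$ large enough to absorb'' the additive $\eta$ term into $C_m\gamma$ --- is a genuine gap, and it cannot be repaired: $\gamma$ and $\eta$ are independent quantities, and $\gamma$ can vanish while $\eta>0$. Your own closing remark supplies the counterexample: take a non-regular $\eta$-approximate simplex and let $p$ be its circumcenter inside $\aff(z_1,\dots,z_m)$; then $\gamma=0$ while $\|p-\mu\| \asymp \eta\lambda$, so no constant $C_m$ turns $K_m(\gamma+\eta)$ into $C_m\gamma$, however small $\eta \le 1/C_m$ is required to be (that hypothesis only keeps $\sigma_{m-1}$ bounded below via \lemref{simplex-value}; it does not make $\eta$ small relative to $\gamma$). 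Two smaller slips: affine independence does not follow from $\eta<1$ alone (a unit square is a $0.3$-approximate regular simplex with $m=4$), only from $\eta\le 1/C_m$ via \lemref{simplex-value}, which you invoke anyway; and the stated conclusion $C_m\lambda\gamma$ is not dilation-covariant ($\gamma$ scales like $\lambda^2$), so ``restoring $\lambda$'' from the normalized bound would give $C_m\gamma/\lambda$, not $C_m\lambda\gamma$.

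For comparison, the paper's proof takes exactly your route (trilateration with $q=\mu$ plus \lemref{simplex-value}), but it reaches the clean bound only by writing the trilateration estimate as $\tfrac12\sqrt{m-1}\,\sigma_{m-1}^{-1}\max_i\big|\|p-z_m\|^2-\|p-z_i\|^2\big|$, i.e.\ by silently dropping the terms $\|\mu-z_m\|^2-\|\mu-z_i\|^2$ from \lemref{trilateration}; these vanish only for an exact regular simplex and are otherwise of order $\eta\lambda^2$ --- precisely the terms you track with \lemref{approx-simplex}. So your bookkeeping is the more careful one, and what you actually prove, $\|p-\mu\|\le C_m(\gamma/\lambda+\eta\lambda)$ in general scale, is the correct conclusion of this line of argument; the lemma as stated needs such an additive $\eta\lambda$ term (or an extra hypothesis making the barycenter-distance differences vanish), and no choice of $C_m$ in your last sentence can avoid it.
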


\begin{proof}
By scale equivariance, we may assume that $\lambda = 1$.  
By \lemref{trilateration}, we have 
\[
\|p - \mu\| \le \frac12 \sqrt{m-1} \, \sigma_{m-1}^{-1}([z_1 \cdots z_m]) \max_i \big|\|p - z_m\|^2 - \|p - z_i\|^2\big|.
\]
By \lemref{simplex-value}, there is a constant $C_m'$ such that $\sigma_{m-1}^{-1}([z_1 \cdots z_m]) \le C_m'$ when $\eta \le 1/C_m'$.
And we also have $\max_i \big|\|p - z_m\|^2 - \|p - z_i\|^2\big| \le \gamma$.
From this, we conclude.
\end{proof}

%

\begin{lem} \label{lem:near-sim}
Let $\psi : \Lambda \mapsto Q$ be isotonic, where $\Lambda, Q \subset \bbR^d$.
Let $v \in \bbR^d$ and $r > 0$, and set  $\eps = \delta_H(\Lambda, B(v, r))$.    
There is $C \propto \diam(Q)/r$ such that, for all $x,x',x^\dag,x^\ddag \in \Lambda$ with $x, x' \subset B(v, 3r/4)$ and for all $\eta \in (0,r/4-2\eps)$,
\beq \label{near-sim}
\|x - x'\| = \|x^\dag - x^\ddag\| \pm \eta \implies \|\psi(x) - \psi(x')\| = \|\psi(x^\dag) - \psi(x^\ddag)\| \pm C(\eta + \eps).
\eeq
\end{lem}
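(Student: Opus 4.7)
The plan is to leverage two ingredients: the isotonicity of $\psi$, which preserves distance comparisons exactly, and the local Lipschitz bound of Lemma \ref{lem:Lip}, which controls how much $\psi$ can stretch short distances. Without loss of generality I would assume $s := \|x - x'\| \le t := \|x^\dag - x^\ddag\|$, so that $0 \le t - s \le \eta$; the reverse case is symmetric. By isotonicity, $\|\psi(x) - \psi(x')\| \le \|\psi(x^\dag) - \psi(x^\ddag)\|$, so the task reduces to bounding the gap $\Delta := \|\psi(x^\dag) - \psi(x^\ddag)\| - \|\psi(x) - \psi(x')\|$ from above by $C(\eta + \eps)$.

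The core construction, in the nontrivial case $s > 0$, is to extend $x$ slightly away from $x'$ to obtain a point in $\Lambda$ whose distance to $x'$ strictly exceeds $t$. Concretely, I would set $y^* = x + \alpha (x - x')/s$ with $\alpha := (t - s) + 2\eps$, so that $\|y^* - x'\| = t + 2\eps$ and $\|y^* - x\| \le \eta + 2\eps$. The assumption $\eta + 2\eps < r/4$ together with $\|x - v\| < 3r/4$ places $y^*$ safely inside $B(v, r)$, and the Hausdorff condition $\delta_H(\Lambda, B(v,r)) = \eps$ then yields $y \in \Lambda$ with $\|y - y^*\| \le \eps$, which guarantees $\|y - x'\| \ge t + \eps > t$. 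The degenerate case $s = 0$ (so $x = x'$, and then $t \le \eta$) is handled directly by applying Lemma \ref{lem:Lip} to the pair $(x^\dag, x^\ddag)$.

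With $y$ in hand, I would invoke isotonicity once more via the strict inequality $\|x^\dag - x^\ddag\| = t < \|y - x'\|$ to get $\|\psi(x^\dag) - \psi(x^\ddag)\| \le \|\psi(y) - \psi(x')\|$. Then by the triangle inequality, $\Delta \le \|\psi(y) - \psi(x')\| - \|\psi(x) - \psi(x')\| \le \|\psi(y) - \psi(x)\|$, and Lemma \ref{lem:Lip} applied to $\Lambda \subset B(v, r)$ bounds the last quantity by $C_0 (\|y - x\| + \eps) \le C_0 (\eta + 3\eps)$ with $C_0 \propto \diam(Q)/r$, completing the proof up to absorbing constants.

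The main obstacle is the delicate balance in the construction of $y$: isotonicity demands the strict inequality $\|y - x'\| > t$, while the Lipschitz step demands that $y$ sit close to $x$ and inside $B(v, r)$ with enough slack to absorb the Hausdorff approximation error. The hypothesis $\eta < r/4 - 2\eps$ together with $x \in B(v, 3r/4)$ is precisely what provides the $2\eps$ buffer in $\alpha$ so that, after snapping $y^*$ to a nearby element of $\Lambda$, the strict inequality survives while $y$ and $x$ remain jointly accessible to the Lipschitz bound on $B(v,r)$.
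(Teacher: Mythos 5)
Your construction for the case $\|x-x'\| \le \|x^\dag-x^\ddag\|$ is essentially the paper's: extend along the line through $x,x'$ by $(t-s)+2\eps$, use $x,x'\in B(v,3r/4)$ and $\eta+2\eps<r/4$ to keep the auxiliary point inside $B(v,r)$, snap it to $\Lambda$, and finish with isotonicity, the triangle inequality, and \lemref{Lip}; your treatment of the degenerate case $\|x-x'\|$ small also matches the paper's first step. The genuine gap is the sentence ``the reverse case is symmetric.'' It is not: the hypothesis $x,x'\in B(v,3r/4)$ is not symmetric in the two pairs, since $x^\dag,x^\ddag$ are arbitrary points of $\Lambda$ and may lie far outside $B(v,r)$. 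In the case $\|x-x'\|>\|x^\dag-x^\ddag\|$ the inequality you still owe is $\|\psi(x)-\psi(x')\|\le\|\psi(x^\dag)-\psi(x^\ddag)\|+C(\eta+\eps)$, and your template applied to the swapped pairs would extend from $x^\dag$ away from $x^\ddag$; that extended point need not lie in $B(v,r)$, so the Hausdorff condition provides no nearby point of $\Lambda$ and the argument breaks down. What is needed there is a different construction, carried out again on the pair $(x,x')$: move from $x$ toward $x'$ along the segment $[xx']$ by $\eta+2\eps$ (the segment stays inside the ball by convexity), snap to $x^\star\in\Lambda$ with $\|x^\star - x\|\le \|x-x'\|-\eta-\eps<\|x^\dag-x^\ddag\|$, deduce $\|\psi(x)-\psi(x^\star)\|\le\|\psi(x^\dag)-\psi(x^\ddag)\|$ by isotonicity, and conclude via the triangle inequality and \lemref{Lip} using $\|x'-x^\star\|\le\eta+3\eps$. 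This is exactly how the paper handles that case.

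A second, smaller omission of the same kind: when $\|x-x'\|=\|x^\dag-x^\ddag\|$, your step ``by isotonicity, $\|\psi(x)-\psi(x')\|\le\|\psi(x^\dag)-\psi(x^\ddag)\|$'' is not available, because isotonicity as defined in \eqref{isotonic} requires a strict inequality between the distances; the paper covers the equality case by running both constructions with $\eta=0$, and the shrinking construction just described repairs it here as well. With these two additions your argument coincides with the paper's proof.
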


\begin{proof}
Let $\xi = \|x - x'\|$ and $\xi^\dag = \|x^\dag - x^\ddag\|$.  
Suppose that $\xi < \eta + 2\eps$, which implies that $\xi^\dag < 2\eta+2\eps$.  In that case, \lemref{Lip} --- where the constant there is denoted here by $C_1 \propto \diam(Q)/r$ --- yields $\|\psi(x) - \psi(x')\| \le C_1 (\xi + \eps) \le C_1 (\eta + 3\eps)$ and, similarly, $\|\psi(x^\dag) - \psi(x^\ddag)\| \le C_1 (2\eta + 3\eps)$.  This proves \eqref{near-sim}.  Henceforth, we assume that $\xi \ge \eta + 2\eps$.

First assume that $\xi > \xi^\dag$.
In that case, we immediately have $\|\psi(x) - \psi(x')\| \ge \|\psi(x^\dag) - \psi(x^\ddag)\|$.
For the reverse, let $y_t = (1-t) x + t x'$, and note that $\|y_t - x\| = t \xi$.  Take $t = 1 - (\eta + 2\eps)/\xi$ and note that $t \in [0,1]$, so that $y_{t} \in [x x'] \subset B(v,r)$, and therefore there is $x^\star \in \Lambda$ such that $\|x^\star - y_{t}\| \le \eps$.
We have $\|x^\star - x\| \le \|y_{t} - x\| + \|x^\star - y_{t}\| \le \xi - \eta - \eps < \xi^\dag$, so that $\|\psi(x) - \psi(x^\star)\| \le \|\psi(x^\dag) - \psi(x^\ddag)\|$.  Applying the triangle inequality and \lemref{Lip}, we then have 
\[
\begin{split}
\|\psi(x) - \psi(x^\star)\| 
&\ge \|\psi(x) - \psi(x')\| - \|\psi(x') - \psi(x^\star)\| \\
&\ge \|\psi(x) - \psi(x')\| - C_1 (\|x' - x^\star\| + \eps),
\end{split}
\]
with $\|x' - x^\star\| \le \|x' - y_{t}\| + \|y_{t}-x^\star\| \le \eta + 3\eps$.

When $\xi < \xi^\dag$, we choose $t = 1 + (\eta + 2\eps)/\xi$.  Because $x, x' \subset B(v, 3r/4)$, we still have $y_t \in B(v,r)$ because of the constraint on $\eta$.  The remaining arguments are analogous.

When $\xi =  \xi^\dag$, repeating what we just did both ways and with $\eta = 0$ yields the result.
\end{proof}

\begin{lem} \label{lem:diam-bound}
Consider $\psi : \Lambda \mapsto \bbR^d$ isotonic, where $\Lambda \subset \bbR^d$.  Let V denote the convex hull of $\Lambda$.  Set $\eps = \delta_H(\Lambda, V)$ and $c = \diam(\psi(\Lambda))/5\diam(\Lambda)$.  Then $\|\psi(x) - \psi(x')\| \ge c \|x -x'\|$ for all $x, x' \in \Lambda$ such that $\|x -x'\| \ge 4 \eps$. 
\end{lem}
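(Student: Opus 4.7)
The plan is to use isotonicity to bound $\diam(\psi(\Lambda))$ by a chain of image-steps, each of which is at most $\|\psi(x) - \psi(x')\|$, and whose length is of order $\diam(\Lambda)/\|x-x'\|$. Set $\eta := \|x - x'\|$; the hypothesis $\eta \ge 4\eps$ yields the slack $\eta - 3\eps \ge \eta/4 > 0$, which I will exploit both to keep the chain short and to preserve a \emph{strict} gap below $\eta$. For any $t < \diam(\psi(\Lambda))$ I fix $x^\dag, x^\ddag \in \Lambda$ with $\|\psi(x^\dag) - \psi(x^\ddag)\| > t$, set $D := \|x^\dag - x^\ddag\| \le \diam(\Lambda)$, and aim to prove $\|\psi(x) - \psi(x')\| > t \eta / (5 \diam(\Lambda))$. (If $\diam(\psi(\Lambda)) = 0$ the claim is vacuous.)

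For the chain, I first place $y_0 = x^\dag, y_1, \dots, y_N = x^\ddag$ along the segment $[x^\dag, x^\ddag]$, which lies in $V$ by convexity, with consecutive gaps at most $\eta - 3\eps$; since $\eta - 3\eps \ge \eta/4$, this can be done with $N \le \lceil D/(\eta - 3\eps) \rceil \le 4D/\eta + 1$. Using $\delta_H(\Lambda, V) = \eps$, I then snap each internal $y_i$ to some $z_i \in \Lambda$ with $\|z_i - y_i\| \le \eps$, keeping $z_0 = x^\dag$ and $z_N = x^\ddag$. The triangle inequality then gives
\[
\|z_{i+1} - z_i\| \le \eps + (\eta - 3\eps) + \eps = \eta - \eps < \eta = \|x - x'\|,
\]
a \emph{strict} inequality, which is what isotonicity requires.

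By isotonicity of $\psi$, each step satisfies $\|\psi(z_{i+1}) - \psi(z_i)\| \le \|\psi(x) - \psi(x')\|$, so the triangle inequality in $\bbR^d$ yields
\[
t < \|\psi(x^\dag) - \psi(x^\ddag)\| \le \sum_{i=0}^{N-1} \|\psi(z_{i+1}) - \psi(z_i)\| \le N \, \|\psi(x) - \psi(x')\|.
\]
Since $D, \eta \le \diam(\Lambda)$, I have $N \le 4D/\eta + 1 \le (4\diam(\Lambda) + \eta)/\eta \le 5\diam(\Lambda)/\eta$, so $\|\psi(x) - \psi(x')\| > t \eta / (5 \diam(\Lambda))$. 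Letting $t \nearrow \diam(\psi(\Lambda))$ delivers the claim with $c = \diam(\psi(\Lambda))/(5\diam(\Lambda))$.

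I do not foresee a serious obstacle: the argument is essentially a chain-covering estimate. The only delicate point is tuning the segment spacing so that snapping to $\Lambda$ does not destroy the strict gap $\|z_{i+1} - z_i\| < \|x - x'\|$. Allowing a $3\eps$ slack along the segment absorbs the $2\eps$ Hausdorff wobble with $\eps$ to spare, and this is exactly why the hypothesis $\|x-x'\| \ge 4\eps$ (rather than something weaker like $\ge 2\eps$) is imposed.
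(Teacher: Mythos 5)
Your proof is correct and follows essentially the same route as the paper's: chain along the segment joining a (near-)diametral pair inside the convex hull, snap the chain points to $\Lambda$ within $\eps$ so that each step stays strictly below $\|x-x'\|$ (the $3\eps$ spacing absorbing the $2\eps$ wobble), apply isotonicity step by step, and count at most $5\diam(\Lambda)/\|x-x'\|$ steps. The only difference is organizational—you run the estimate directly for the fixed pair and take $t \nearrow \diam(\psi(\Lambda))$, whereas the paper phrases the same chaining bound as a contradiction—and the constant $5$ arises identically.
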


\begin{proof}
We first prove that, if $c>0$ and $\eta \ge 4 \eps$ are such that $\|\psi(x) - \psi(x')\| \le c \eta$ for all $x,x' \in \Lambda$ with $\|x - x'\| < \eta$, then $\diam(\psi(\Lambda)) < c (4 \diam(\Lambda) + \eta)$.  
Indeed, take $x, x' \in \Lambda$.  
Let $u = (x'-x)/\|x'-x\|$ and $L = \|x-x'\|$, and define $y_j = x + s_j u$ where $s_j = j (\eta - 3\eps)$ for $j = 0, \dots, J := \lfloor L/(\eta - 3\eps) \rfloor$, and then let $s_{J+1} = L$.  
By construction, $y_j \in [xx'] \subset V$, with $y_0 = x$ and $y_{J+1} = x'$.
Let $x_j \in \Lambda$ be such that $\|x_j - y_j\| \le \eps$, with $x_0 = x$ and $x_{J+1} = x'$.  
By the triangle inequality, $\|x_{j+1} - x_j\| \le \|y_{j+1} - y_j\| + 2\eps = s_{j+1} - s_j + 2 \eps < \eta$.  Hence,
\[
\|\psi(x) - \psi(x')\| \le \sum_{j=0}^{J} \|\psi(x_j) - \psi(x_{j+1})\| \le (J+1) c \eta \le c \frac{L \eta}{\eta - 3\eps} + c \eta < c (4 \diam(\Lambda) + \eta),
\]
since $\eta - 3\eps \ge \eta - 3\eta/4 = \eta/4$ and $L \le \diam(\Lambda)$.

Now assume that $\psi$ is isotonic and suppose that $\|\psi(x) - \psi(x')\| < c \|x -x'\|$ for some $x,x' \in \Lambda$ such that $\eta := \|x -x'\| \ge 4 \eps$.  Then we have $\|\psi(x^\dag) - \psi(x^\ddag)\| \le c \eta$ when $x^\dag, x^\ddag \in \Lambda$ satisfy $\|x^\dag -x^\ddag\| < \eta$.  We just showed that this implies that $\diam(\psi(\Lambda)) < c (4 \diam(V) + \eta)$, and we conclude using the fact that $\eta \le \diam(\Lambda)$.
\end{proof}

The following result is on 1-nearest neighbor interpolation.  
\begin{lem} \label{lem:1nn}
Let $\Lambda$ be a subset of isolated points in $V \subset \bbR^d$ and set $\eps = \delta_H(\Lambda, V)$.  For any function $\psi : \Lambda \mapsto \bbR^d$, define its 1-nearest neighbor interpolation as $\hat\psi : V \mapsto \bbR^d$ as 
\beq \label{1NN}
\hat\psi(y) = \frac1{|N_\Lambda(y)|} \sum_{x \in N_\Lambda(y)} \psi(x), \quad N_\Lambda(y) := \argmin_{x \in \Lambda} \|x - y\|.
\eeq
Consider the modulus of continuity of $\psi$, which for $\eta > 0$ is defined as $\omega(\eta) = \sup\{ \|\psi(x) - \psi(x')\| : x, x' \in \Lambda, \|x - x'\| \le \eta\}$.  Then the modulus of continuity of $\hat\psi$, denoted $\hat\omega$, satisfies $\hat\omega(\eta) \le \omega(\eta + 2 \eps)$.
Moreover, for any $y,y' \in V$ and any $x,x' \in \Lambda$ such that $\|x-y\| \le \eps$ and $\|x'-y'\| \le \eps$,
\[
\|\hat\psi(y) - \hat\psi(y')\| = \|\psi(x) - \psi(x')\| \pm 2\omega(2\eps).
\] 
\end{lem}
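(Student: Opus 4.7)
The plan is to prove both claims via direct triangle-inequality estimates, exploiting the central fact that every element of $N_\Lambda(y)$ lies within distance $\eps$ of $y$. Indeed, since $\eps = \delta_H(\Lambda, V) \ge \inf_{x \in \Lambda}\|x - y\|$ for every $y \in V$, and $N_\Lambda(y)$ consists of the minimizers of $x \mapsto \|x - y\|$ over $\Lambda$, every $\tilde x \in N_\Lambda(y)$ satisfies $\|\tilde x - y\| \le \eps$. This is the only geometric input needed.

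For the modulus of continuity bound, I would take $y, y' \in V$ with $\|y - y'\| \le \eta$. For any $\tilde x \in N_\Lambda(y)$ and $\tilde x' \in N_\Lambda(y')$, the triangle inequality gives $\|\tilde x - \tilde x'\| \le \eps + \eta + \eps = \eta + 2\eps$, so $\|\psi(\tilde x) - \psi(\tilde x')\| \le \omega(\eta + 2\eps)$ by definition of $\omega$. The key trick is to rewrite
\[
\hat\psi(y) - \hat\psi(y') = \frac{1}{|N_\Lambda(y)|\,|N_\Lambda(y')|} \sum_{\tilde x \in N_\Lambda(y)} \sum_{\tilde x' \in N_\Lambda(y')} \big(\psi(\tilde x) - \psi(\tilde x')\big),
\]
so that applying the triangle inequality termwise immediately yields $\|\hat\psi(y) - \hat\psi(y')\| \le \omega(\eta + 2\eps)$, which is the first claim.

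For the second claim I would first establish the auxiliary estimate $\|\hat\psi(y) - \psi(x)\| \le \omega(2\eps)$ whenever $x \in \Lambda$ satisfies $\|x - y\| \le \eps$. The reason is that any $\tilde x \in N_\Lambda(y)$ is a closest point of $\Lambda$ to $y$, hence $\|\tilde x - y\| \le \|x - y\| \le \eps$, so $\|\tilde x - x\| \le 2\eps$ and $\|\psi(\tilde x) - \psi(x)\| \le \omega(2\eps)$; averaging over $\tilde x \in N_\Lambda(y)$ and using the triangle inequality yields the estimate. The same bound applies to the pair $(y', x')$. A final triangle inequality applied to the chain $\hat\psi(y) \to \psi(x) \to \psi(x') \to \hat\psi(y')$ gives $\|\hat\psi(y) - \hat\psi(y')\| = \|\psi(x) - \psi(x')\| \pm 2\omega(2\eps)$. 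There is no real obstacle here: the argument is pure triangle-inequality bookkeeping, the essential point being that 1-NN interpolation writes $\hat\psi(y)$ as a convex combination of values of $\psi$ at points that are uniformly within $\eps$ of $y$.
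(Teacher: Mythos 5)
Your proof is correct and follows essentially the same route as the paper: bound $\|\tilde x - \tilde x'\|$ for nearest neighbors via the triangle inequality and pass the bound through the average (the paper bounds the difference of averages by the supremum over pairs, which is the same convexity observation as your double-sum decomposition), then for the second claim use the auxiliary estimate $\|\hat\psi(y) - \psi(x)\| \le \omega(2\eps)$ and chain triangle inequalities. No gaps; nothing further needed.
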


\begin{proof}
Fix $\eta > 0$ and take $y, y' \in V$ such that $\|y - y'\| \le \eta$.  We have $\|x - y\| \le \eps$ for all $x \in N_\Lambda(y)$  and $\|x' - y'\| \le \eps$ for all $x' \in N_\Lambda(y')$, so that $\|x - x'\| \le \|y - y'\| + 2 \eps$ for all such $x$ and $x'$, by the triangle inequality.  Therefore, 
\[\begin{split}
\|\hat\psi(y) - \hat\psi(y')\| 
&\le \sup\Big\{\|\psi(x) - \psi(x')\| : x \in N_\Lambda(y), x' \in N_\Lambda(y')\Big\} \\
&\le \sup\Big\{\|\psi(x) - \psi(x')\| : x, x' \in \Lambda, \|x - x'\| \le \eta + 2\eps\Big\} = \omega(\eta + 2\eps).
\end{split}\]
Since this is true for all $y, y' \in V$ such that $\|y - y'\| \le \eta$, we conclude that $\hat\omega(\eta) \le \omega(\eta + 2\eps)$.

For the second part of the lemma, we have
\[
\|\hat\psi(y) - \hat\psi(y')\|  = \|\psi(x) - \psi(x')\| \pm \|\hat\psi(y) - \psi(x)\| \pm \|\hat\psi(y') - \psi(x')\|,
\]
where the second term is bounded by
\[\begin{split}
\|\hat\psi(y) - \psi(x)\| 
&\le \sup\Big\{\|\psi(\tilde x) - \psi(x)\| : \tilde x \in N_\Lambda(y)\Big\} \\
&\le \sup\Big\{\|\psi(\tilde x) - \psi(x)\| : \|\tilde x - x\| \le 2 \eps\Big\} \le \omega(2\eps),
\end{split}\]
using the fact that $\|\tilde x - x\| \le \|\tilde x - y\| + \|y - x\| \le 2\eps$, and similarly for the third term.
\end{proof}

Let $V \subset \bbR^d$ be convex.  In our context, we say that $f : V \mapsto \bbR^d$ is {\em $\eta$-approximately midlinear} if 
\[
\Big\|f\Big(\frac{x+y}2\Big) - \frac12 (f(x) + f(y))\Big\| \le \eta, \quad \forall x,y \in V.
\]

\begin{lem} \label{lem:near-midpoint} 
Let $V \subset \bbR^d$ be star-shaped with respect to some point in its interior.  There is a constant $C$ depending only on $V$ such that, for any $\eta$-approximately midlinear function $f: V \mapsto \bbR^d$, there is a affine function $T : \bbR^d \mapsto \bbR^d$ such that $\sup_{x \in V} \|f(x) - T(x)\| \le C \eta$.
\end{lem}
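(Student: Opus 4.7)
The plan is to follow the strategy of \cite{vestfrid2003linear}: reduce the approximately midlinear condition on the star-shaped $V$ to an approximately Cauchy (additive) equation on a small inscribed ball, apply a Hyers--Ulam type stability argument there to obtain a candidate linear map $L$, and then propagate the bound from the ball to all of $V$ by combining star-shapedness with the approximate halving identity implied by midlinearity.

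First I would normalize: by translation, assume the point of star-shapedness is the origin, and by replacing $f$ with $f - f(0)$ (which preserves $\eta$-approximate midlinearity), assume $f(0) = 0$. Let $r > 0$ be such that $B(0, r) \subset V$ and $R = \sup_{x \in V} \|x\|$; both depend only on $V$. Next I would derive approximate additivity on $B(0, r/4)$: for $x, y \in B(0, r/4)$ the points $x+y, 2x, 2y$ all lie in $B(0, r/2) \subset V$, so applying midlinearity to $(2x, 2y)$ gives $\|f(x+y) - \tfrac12(f(2x)+f(2y))\| \le \eta$, while applying it to $(2x, 0)$ (with $f(0) = 0$) gives $\|f(2x) - 2 f(x)\| \le 2\eta$. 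Combining yields $\|f(x+y) - f(x) - f(y)\| \le 3\eta$ on $B(0, r/4)$. A Hyers--Ulam type argument adapted to a bounded domain, as in \cite{vestfrid2003linear}, then furnishes a linear map $L:\bbR^d \to \bbR^d$ with $\|f(x) - L(x)\| \le C_1 \eta$ on $B(0, r/4)$, where $C_1$ depends only on $d$.

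The last step would extend this bound from $B(0, r/4)$ to all of $V$. For $x \in V$, star-shapedness gives $x/2^k \in V$ for every $k \ge 0$, and the smallest $k$ with $x/2^k \in B(0, r/4)$ satisfies $2^k \le 8R/r$. Iterating the inequality $\|2 f(z/2) - f(z)\| \le 2\eta$ (from midlinearity with the second argument equal to $0$) and telescoping give $\|f(x) - 2^k f(x/2^k)\| \le 2(2^k-1)\eta$. Since $L$ is linear, $L(x) = 2^k L(x/2^k)$, so the triangle inequality yields
\[
\|f(x) - L(x)\| \le \|f(x) - 2^k f(x/2^k)\| + 2^k \|f(x/2^k) - L(x/2^k)\| \le 2^k (C_1 + 2)\eta \le C \eta,
\]
with $C$ depending only on $V$. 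Undoing the initial translation, the affine map $T(x) = L(x) + f(0)$ satisfies the conclusion.

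The main obstacle is the restricted Hyers--Ulam step on $B(0,r/4)$: the classical Hyers iteration $\lim_n 2^{-n} f(2^n x)$ uses arguments $2^n x$ that leave the bounded domain $V$, so one must either extend $f$ homogeneously outside $V$ before iterating, or run the iteration inward while controlling the growth of errors by an independent a priori bound, as in \cite{vestfrid2003linear}. Everything else is a careful bookkeeping of constants in terms of the inscribed and circumscribed radii associated with the star-shapedness of $V$.
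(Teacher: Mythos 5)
Your proposal is correct in outline and, in fact, supplies more argument than the paper does: the paper's entire proof of this lemma is a one-line appeal to \citep[Th~1.4]{vestfrid2003linear}, whereas you partially re-derive it by (i) reducing $\eta$-approximate midlinearity to $3\eta$-approximate additivity on an inscribed ball $B(0,r/4)$ (using only midpoints that lie in $V$, which is the right reading of the hypothesis for a star-shaped domain), and (ii) propagating the bound to all of $V$ by dyadic contraction along rays to the star-center, with the telescoping estimate $\|f(x)-2^k f(x/2^k)\|\le 2(2^k-1)\eta$ and $2^k\le 8R/r$; these computations check out, and the resulting constant depends only on $d$ and $R/r$, hence only on $V$, as required. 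The one step you do not execute is the pivotal one: Hyers--Ulam stability of the Cauchy equation on the bounded ball, which you defer with ``as in \citep{vestfrid2003linear}''. You are right that the classical Hyers iteration $2^{-n}f(2^n x)$ is unavailable there, and a complete write-up would either invoke a precise restricted-domain stability theorem (results of Skof type give exactly this, with a constant depending only on the dimension after rescaling the ball to unit size, and the additive approximant is automatically linear since it is bounded near $f$ on a ball) or carry out the inward iteration you allude to. Since the paper itself outsources the whole lemma to Vestfrid, deferring only this sub-step is defensible; your decomposition has the added benefit of making explicit how the star-shaped case reduces to the convex (ball) case, at the cost of leaving the hardest analytic ingredient as a citation rather than a proof.
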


Note that, if $V$ is a ball, then by invariance considerations, $C$ only depends on $d$.

\begin{proof}
This is a direct consequence of \citep[Th~1.4]{vestfrid2003linear}.
\end{proof}

We say that $f : V \subset \bbR^d \mapsto \bbR^d$ is an {\em $\eps$-isometry} if 
\[
\|x - y\| -\eps \le \|f(x) - f(y)\| \le \|x - y\| +\eps, \quad \forall x, y \in V. 
\]
For a set $V \subset \bbR^d$, define its thickness as 
\[
\theta(V) = \inf\big\{ \diam(u^\top V) : u \in \bbR^d, \|u\| = 1\big\}.
\]
Recalling the definition of $\rho$ in \eqref{rho}, we note that $\theta(V) \ge \rho(V)$, but that the two are distinct in general.

\begin{lem} \label{lem:eps-isometry} 
Let $V \subset \bbR^d$ be compact and such that $\theta(V) \ge \eta \diam(V)$ for some $\eta > 0$.  There is a constant $C$ depending only on $d$ such that, if $f : V \mapsto \bbR^d$ is an {\em $\eps$-isometry}, then there is an isometry $R : \bbR^d \mapsto \bbR^d$ such that $\max_{x \in V} \|f(x) - R(x)\| \le C \eps/\eta$. 
\end{lem}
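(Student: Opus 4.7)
The plan is to invoke the main stability theorem for $\eps$-isometries of subsets of $\bbR^d$ established by \cite{alestalo2001isometric}, which is essentially this statement verbatim. The thickness hypothesis $\theta(V) \ge \eta \diam(V)$ is precisely the non-degeneracy condition they require: it prevents $V$ from being nearly flat, so an $\eps$-isometry on $V$ cannot ``cheat'' through projection onto a hyperplane.

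If I were to give a self-contained argument, I would proceed in three steps. First, extract from $V$ an affinely well-conditioned simplex $v_0, v_1, \dots, v_d$ with $\sigma_d([v_1 - v_0 \, \cdots \, v_d - v_0]) \ge c_d \, \eta \diam(V)$, for a constant $c_d$ depending only on $d$. Such a simplex exists by the thickness hypothesis: one constructs it greedily, choosing $v_i$ to maximize the distance from $\aff(v_0, \dots, v_{i-1})$; the $i$-th step succeeds because the projection of $V$ onto any direction orthogonal to that affine span is a segment of length at least $\eta \diam(V)$, so some point of $V$ lies that far away from the span.

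Second, build the candidate isometry $R$ via an orthogonal Procrustes matching on these $d+1$ vertices. Because $f$ is an $\eps$-isometry, the polarization identity shows that the Gram matrix of the displacement vectors $f(v_i) - f(v_0)$ differs entrywise by $O(\eps \diam(V))$ from that of $v_i - v_0$. The well-conditioning established in the first step --- which bounds the inverse-Gram operator norm by $O(1/(\eta \diam(V))^2)$ --- combined with standard perturbation bounds for Procrustes produces an isometry $R$ of $\bbR^d$ with $\|R(v_i) - f(v_i)\| \le C \eps/\eta$ for each $i$.

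Third, propagate this control from the simplex to every $x \in V$ by trilateration. Setting $a_i = \|x - v_i\|$ and $b_i = \|f(x) - f(v_i)\|$, the $\eps$-isometry property gives $|a_i^2 - b_i^2| = |a_i - b_i|(a_i + b_i) \le \eps(2 \diam(V) + \eps)$. Applying \lemref{trilateration} to $p = f(x)$ and $q = R(x)$ with landmarks $f(v_i)$ --- absorbing the $O(\eps/\eta)$ discrepancy between $f(v_i)$ and $R(v_i)$ into the error budget --- yields $\|f(x) - R(x)\| \le \sqrt{d}\, \sigma_d(Z)^{-1} \cdot O(\eps \diam(V)) = O(\eps/\eta)$, as desired. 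The main obstacle is the Procrustes step: producing a genuine isometry (not merely a near-affine map) from approximate Gram data, while keeping the error linear in $\eps$ and inversely proportional to $\eta$. Once that is done, trilateration extends the bound to all of $V$ for free, the well-conditioned simplex serving as a coordinate system of condition number $O(1/\eta)$.
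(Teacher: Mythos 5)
Your primary plan---invoking the stability theorem for $\eps$-isometries of \cite{alestalo2001isometric}---is exactly the paper's proof, which cites their Theorem~3.3 directly, with the thickness condition $\theta(V) \ge \eta\,\diam(V)$ playing precisely the role you describe. The additional self-contained sketch (well-conditioned simplex, Procrustes on the vertices, trilateration via \lemref{trilateration}) goes beyond what the paper records but is not needed for the statement as used there.
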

 
\begin{proof}
This is a direct consequence of \citep[Th~3.3]{alestalo2001isometric}.
\end{proof}

\begin{lem} \label{lem:affine-near-sim}
Let $T : \bbR^d \mapsto \bbR^d$ be an affine function that transforms a regular simplex of edge length 1 into an $\eta$-approximate regular simplex of maximum edge length $\lambda > 0$.
There is a constant $C$, depending only on $d$, and an isometry $R$, such that $\|T(x) - \lambda R(x)\| \le C \lambda \eta$ for all $x \in B(0,1)$.
\end{lem}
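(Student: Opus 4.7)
The plan is to use \lemref{approx-simplex} to snap the image simplex $\{T(z_i)\}$ to a true regular simplex of edge length $\lambda$, read off a candidate similarity $\bar T = \lambda R_0$ from it, and then shift $R_0$ by a translation so that the only leftover error on $B(0,1)$ is the (small) linear part of $T - \bar T$.

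Concretely, let $z_0, \dots, z_d$ be the vertices of the given regular simplex of edge length $1$ and set $z'_i := T(z_i)$, so that $(z'_i)$ is an $\eta$-approximate regular simplex of maximum edge length $\lambda$. By \lemref{approx-simplex} (applied with $m = d+1$), there exist $z''_0, \dots, z''_d \in \aff(z'_0, \dots, z'_d) = \bbR^d$ forming a true regular simplex of edge length $\lambda$ with $\max_i \|z'_i - z''_i\| \le C_1 \lambda \eta$, where $C_1 = C_1(d)$. The unique affine map $\bar T : \bbR^d \mapsto \bbR^d$ with $\bar T(z_i) = z''_i$ carries one regular simplex to another of proportional edge length, so it is a similarity of ratio $\lambda$: $\bar T(x) = \lambda R_0(x)$ for some affine isometry $R_0$.

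Next, I would write $D := T - \bar T = L x + c$ with $L \in \bbR^{d \times d}$ and $c \in \bbR^d$. Since $D(z_i) = z'_i - z''_i$, we have $\|L v_i\| = \|D(z_i) - D(z_0)\| \le 2 C_1 \lambda \eta$ for the edge vectors $v_i := z_i - z_0$, $i = 1, \dots, d$. These vectors form a basis of $\bbR^d$ whose Gram matrix (with $1$'s on the diagonal and $1/2$'s off-diagonal) has extremal singular values depending only on $d$, hence $\|L\| \le C_2 \lambda \eta$ for some $C_2 = C_2(d)$. Finally, set $R(x) := R_0(x) + c/\lambda$, which is still an affine isometry; then $\lambda R(x) = \bar T(x) + c$, so $T(x) - \lambda R(x) = D(x) - c = L x$, and hence $\|T(x) - \lambda R(x)\| \le \|L\|\, \|x\| \le C_2 \lambda \eta$ on $B(0,1)$.

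The main technical input is \lemref{approx-simplex}; the rest is elementary linear algebra. The delicate point is that the constant part $c$ of the affine difference $T - \bar T$ may be large (it depends on where the original simplex sits in $\bbR^d$), so one cannot simply take $R = R_0$ and hope to bound $D$ directly at $x = 0$; the fix is that translating $R_0$ by $c/\lambda$ is still an isometry and precisely absorbs $c$, leaving only the small linear part $Lx$, whose control is governed solely by the (shape of the) simplex and thus by $d$ alone.
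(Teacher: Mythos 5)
Your proof is correct and follows essentially the same route as the paper's: apply \lemref{approx-simplex} to snap the image simplex to an exact regular simplex of edge length $\lambda$, then bound the difference of the linear parts using the fact that the smallest singular value of the matrix of simplex (edge) vectors is a positive constant depending only on $d$. The only difference is in bookkeeping: the paper reduces to a linear $T$ and a simplex with a vertex at the origin ``by invariance,'' whereas you keep everything affine and absorb the constant part of $T-\bar T$ into the translation component of the isometry, which makes explicit the step the paper leaves implicit.
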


\begin{proof}
By invariance, we may assume $T$ is linear and that the regular simplex is formed by $0, z_1, \dots, z_d$ and has edge length 1.  
Letting $w_i = T(z_i)$, we have that $0, w_1, \dots, w_d$ form an $\eta$-approximate regular simplex of maximum edge length $\lambda := \max_i \|w_i\|$.   
\lemref{approx-simplex} gives $0, w'_1, \dots, w'_d$ forming a regular simplex of edge length $\lambda$ such that $\max_i \|w_i - w'_i\| \le C_1 \lambda \eta$ for some constant $C_1$.  Let $R$ be the orthogonal transformation such that $R(z_i) = w'_i/\lambda$ for all $i \in [d]$.  
We have $\|T (z_i) - \lambda R (z_i) \| = \|w_i - w'_i\| \le C_1 \lambda \eta$ for all $i$.  In matrix notation, letting $Z := [z_1 \dots z_d]$, we have 
\[\|T Z - \lambda R Z\| \le \|T Z - \lambda R Z\|_F = \sqrt{\sum_{i=1}^d \|T z_i - \lambda R z_i \|^2} \le \sqrt{d} \max_{i \in [d]} \|T z_i - \lambda R z_i \| \le \sqrt{d} \, C_1 \lambda \eta.\] 
At the same time, $\|T Z - \lambda R Z\| \ge \|T - \lambda R\|/\|Z^{-1}\|$ with $\|Z^{-1}\| =1/\sigma_d(Z) = 1/\sigma_d([0 z_1 \cdots z_d])$ being a positive constant depending only on $d$.  Hence, $\|T - \lambda R\| \le (\sqrt{d}/\sigma_d(Z)) C_1 \lambda \eta =: C_2 \lambda \eta$.
\end{proof}

\begin{lem} \label{lem:affine-close}
Suppose that $S_1, S_2 : \bbR^d \mapsto \bbR^d$ are two affinities such that $\max_{x \in B(y,r)} \|S_1(x) - S_2(x)\| \le \eta$ for some $y \in \bbR^d$ and $r > 0$.  Then $\|S_1(x) - S_2(x)\| \le 2 \eta \|x-y\|/r + \eta$ for all $x \in \bbR^d$. 
\end{lem}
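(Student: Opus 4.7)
The plan is to reduce the statement to a control on the linear part of the affine difference $D := S_1 - S_2$, which is itself an affine map $\bbR^d \mapsto \bbR^d$. First I would write $D$ in the form $D(x) = D(y) + L(x - y)$, where $L$ is the linear part of $D$ (independent of the base point $y$). The hypothesis says $\|D(x)\| \le \eta$ for every $x \in B(y,r)$, and in particular $\|D(y)\| \le \eta$.

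Next I would estimate the operator norm of $L$ using only the values of $D$ on $B(y,r)$. For any unit vector $u \in \bbR^d$, the point $y + r u$ lies in the closure of $B(y,r)$; by continuity of $D$ the bound $\|D(y + r u)\| \le \eta$ still holds, and since
\[
r L(u) = D(y + r u) - D(y),
\]
the triangle inequality gives $r \|L(u)\| \le 2\eta$, hence $\|L\| \le 2\eta/r$. (If one prefers to avoid boundary points, one may apply the same argument to $y + (r - \delta) u$ and let $\delta \to 0$, using that $L$ does not depend on $\delta$.)

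Finally, for an arbitrary $x \in \bbR^d$, I would combine the two bounds via
\[
\|S_1(x) - S_2(x)\| = \|D(x)\| \le \|D(y)\| + \|L\| \, \|x - y\| \le \eta + \frac{2\eta}{r} \|x - y\|,
\]
which is exactly the claimed inequality. No step looks to be a real obstacle; the only thing worth being careful about is that $L$ really is the linear part of the single map $D$, so that it is legitimate to decompose $D$ around the base point $y$ and that the very same $L$ governs the growth of $D$ at every $x \in \bbR^d$.
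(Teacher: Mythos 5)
Your proposal is correct and follows essentially the same route as the paper: the paper also splits the affine difference into its linear part and its value at the center ($L_i = S_i - S_i(0)$ after normalizing $y=0$, $r=1$), bounds the linear part by $2\eta/r$ from the values on the ball, and concludes by the triangle inequality. Your extra care about boundary points (or the limiting argument with $r-\delta$) is fine and matches what the paper leaves implicit.
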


\begin{proof}
By translation and scale invariance, assume that $y = 0$ and $r =1$.
Let $L_i = S_i - S_i(0)$.  For $x \in B(0,1)$, we $\|L_1(x) - L_2(x)\| \le \|S_1(x) - S_2(x)\| + \|S_1(0) - S_2(0)\| \le 2\eta$.  Hence, for $x \in \bbR^d$, $\|L_1(x) - L_2(x)\| \le 2\eta \|x\|$, which in turn implies that $\|S_1(x) - S_2(x)\| \le \|L_1(x) - L_2(x)\| + \|S_1(0) - S_2(0)\| \le 2\eta \|x\| + \eta$.
\end{proof}

\subsection{Proof of \thmref{rates}} \label{sec:proof-rates}


Without loss of generality, we may assume that $D_n := \diam(\phi_n(\Omega_n)) \ge 1$.  Indeed, suppose that $D_n < 1$, but different from 0, for otherwise $\phi_n$ is a degenerate similarity and the result follows.  Let $\tilde\phi_n = D_n^{-1} \phi_n$, which is isotonic on $\Omega_n$ and satisfies $\diam(\tilde\phi_n(\Omega_n)) = 1$.  If the result is true for $\tilde\phi_n$, there is a similarity $\tilde S_n$ such that $\max_{x \in \Omega_n} |\tilde\phi_n(x) - \tilde S_n(x)| \le C \eps _n$ for some constant $C$.  
(We implicitly assume that the set $\phi_n(\Omega_n)$ contains the origin, so that $\tilde\phi_n(\Omega_n)$ remains bounded.)
We then have $\max_{x \in \Omega_n} |\phi_n(x) - S_n(x)| \le C D_n \eps _n \le C \eps_n$, where $S_n := D_n \tilde S_n$ is also a similarity.  

Let $r = \rho(U)$, so that there is some $u_\star$ such that $B(u_\star, r) \subset U$.  
Let $\Lambda_n = \Omega_n \cap B(u_\star, r/2)$ and $\delta_n = \diam(\phi_n(\Lambda_n))$.
Let $w$ be any unit-norm vector and define $y_\pm = u_\star \pm (r/2-\eps_n) w$.  Let $x_\pm \in \Omega_n$ be such that $\|x_\pm - y_\pm\| \le \eps_n$.  Necessarily, $x_\pm \in \Lambda_n$ because the distance from $y_\pm$ to $\partial B(u_\star, r/2)$ exceeds $\eps_n$.  Note that $\|x_- - x_+\| \ge r_1 := r - 4 \eps_n$.  
By isotonicity, 
\beq \label{iso1}
\|\phi_n(x) -\phi_n(x')\| \le \|\phi_n(x_-) - \phi_n(x_+)\| \le \delta_n, \text{ whenever $\|x-x'\| < r_1$.}
\eeq
Let $y_1, \dots, y_K$ be a $(r_1/3)$-packing of $U$, so that $K \le C (\diam(U)/r)^d$ for some constant $C>0$.  Let $x_{i_k} \in \Omega_n$ be such that $\|x_{i_k} - y_k\| \le \eps_n$, so that $U \subset \bigcup_{k \in [K]} B(y_k, r_1/3) \subset \bigcup_{k \in [K]} B(x_{i_k}, r_2)$, where $r_2 := r_1/3 + \eps_n$.  
Let $z_k = x_{i_k}$ for clarity.
Take $x, x' \in \Omega_n$.  Because $U$ is open, it is path-connected, so there is a continuous curve $\gamma : [0,1] \mapsto U$ such that $\gamma(0) = x$ and $\gamma(1) = x'$.  Let $k_0 \in [K]$ be such that $x \in B(z_{k_0}, r_2)$ and $s_0 = 0$.  Then for $j \ge 0$, let $s_{j+1} = \inf\{s > s_j : \gamma(s) \notin \bigcup_{l \in [j]} B(z_{k_l}, r_2)\}$, and let $k_{j+1} \in [K]$ be such that $\|z_{k_{j+1}} - \gamma(s_{j+1})\| \le \eps_n$.  Let $J = \min\{j : s_{j+1} = \infty\}$, which is indeed finite.  
By construction, $\|z_{k_j} -z_{k_{j+1}}\| \le 2 r_2 < r_1$ when $\eps_n < r/10$. 
By \eqref{iso1}, we have $\|\phi_n(z_{k_j}) - \phi_n(z_{k_{j+1}})\| \le \delta_n$.  Thus, by the triangle inequality, $\|\phi_n(x) - \phi_n(x')\| \le J \delta_n \le K \delta_n$.
This being true for all $x,x' \in \Omega_n$, this prove that $\delta_n \ge D_n/K \propto D_n (\diam(U)/r)^{-d}$.

\def\F{\hat\phi_n}
\medskip\noindent {\bf 1-NN interpolation.}
Let $\hat\phi_n$ denote the 1-NN interpolation of $\phi_n$ as in \eqref{1NN}. 
We claim that there is a $C^\star_0 \propto D_n/r$ and $c^\star_0 \propto (\diam(U)/r)^{-d} D_n/r$ such that $\hat\phi_n$ satisfies the following properties: for all $y,y',y^\dag,y^\ddag \in U$,
\begin{gather}
\|\hat\phi_n(y) - \hat\phi_n(y')\| \le C_0^\star (\|y - y'\| + \eps_n), \label{hat-Lip} \\
\|y - y'\| < \|y^\dag - y^\ddag\| - 4\eps_n \implies \|\hat\phi_n(y) - \hat\phi_n(y')\| \le \|\hat\phi_n(y^\dag) - \hat\phi_n(y^\ddag)\| + C_0^\star \eps_n, \label{hat-isotonic}
\end{gather}
and also
\beq\begin{array}{c}
\|\hat\phi_n(y) - \hat\phi_n(y')\| \ge c_0^\star \|y - y'\| - C_0^\star \eps_n, \\
\text{if $y,y' \in B(u_\star, r/2)$ satisfy $\|y-y'\| \ge 10 \eps_n$,} \label{hat-diam-bound} \\
\end{array}\eeq
and
\beq\begin{array}{c}
\|y - y'\| = \|y^\dag - y^\ddag\| \pm \eta \implies \|\hat\phi_n(y) - \hat\phi_n(y')\| = \|\hat\phi_n(y^\dag) - \hat\phi_n(y^\ddag)\| \pm C_0^\star(\eta + \eps_n), \\
\text{if $y,y' \in B(u_\star, r/2)$, $\eps_n < r/120$ and $0 \le \eta \le r/5$.} \label{hat-near-sim}
\end{array}\eeq
Indeed, let $x,x',x^\dag,x^\ddag \in \Omega_n$ such that $\|x - y\|, \|x' -y'\|, \|x^\dag - y^\dag\|, \|x^\ddag - y^\ddag\| \le \eps_n$.

For \eqref{hat-Lip}, we start by applying \lemref{1nn} to get 
\[\begin{split}
\|\hat\phi_n(y) - \hat\phi_n(y')\| 
&= \|\phi_n(x) - \phi_n(x')\| \pm 2 \omega_n(2 \eps_n) \\
&\le \omega_n(\|x-x'\|) + 2 \omega_n(2 \eps_n) \le \omega_n(\|y-y'\| + 2\eps_n) + 2 \omega_n(2 \eps_n),
\end{split}\] 
where $\omega_n$ is the modulus of continuity of $\phi_n$.  We then use \lemref{Lip}, which gives that $\omega_n(\eta) \le C \eta$ for all $\eta$ and some $C \propto D_n/r$, to get $\omega_n(\|y-y'\| + 2\eps_n) \pm 2 \omega_n(2 \eps_n) \le C(\|y-y'\| + 6\eps_n)$.

For \eqref{hat-isotonic}, we first note that $\|x-x'\| < \|x^\dag - x^\ddag\|$ by the triangle inequality, which in turn implies that $\|\phi_n(x) - \phi_n(x')\| \le \|\phi_n(x^\dag) - \phi_n(x^\ddag)\|$ since $\phi_n$ is isotonic.  We then apply \lemref{1nn} to get that $\|\hat\phi_n(y) - \hat\phi_n(y')\| \le \|\hat\phi_n(y^\dag) - \hat\phi_n(y^\ddag)\| + 4 \omega_n(2 \eps_n)$, and conclude with \lemref{Lip} as for \eqref{hat-Lip}.

For \eqref{hat-diam-bound}, we may apply \lemref{diam-bound} with $\Lambda_n$.  Let $V$ be the convex hull of $\Lambda_n$, so that $V \subset B(u_\star, r/2)$.  Let $z$ be a point in that ball.  If $z \ne u_\star$, let $w = (u_\star - z)/\|u_\star - z\|$, and if $z = u_\star$, let $w$ be any unit-norm vector.  Define $z' = z + \eps_n w$ and notice that the distance from $z'$ to $\partial B(u_\star, r/2)$ exceeds $\eps_n$.  Therefore, if $x \in \Omega_n$ is such that $\|z'-x\| \le \eps_n$, then necessarily, $x \in \Lambda_n$.  We then note that $\|z-x\| \le 2\eps_n$.  We conclude that $\delta_H(\Lambda_n, V) \le 2\eps_n$.  
Since $\|x -x'\| \ge \|y-y'\| - 2\eps_n \ge 4 (2\eps_n)$, we get that $\|\phi_n(x) - \phi_n(x')\| \ge c \|x-x'\|$, with $c := \diam(\phi_n(\Lambda_n))/5\diam(\Lambda_n) \ge \delta_n/5r$.  We then apply \lemref{1nn} to obtain $\|\hat\phi_n(y) - \hat\phi_n(y')\| \ge c \|x-x'\| - 2\omega_n(2\eps_n) \ge c \|y-y'\| - 2 (c + C) \eps_n$, using \lemref{Lip} as for \eqref{hat-Lip}.

For \eqref{hat-near-sim}, note that $x,x' \in B(u_\star,r/2+\eps_n) \subset B(u_\star, 3r/4)$, and $\|x - x'\| = \|x^\dag - x^\ddag\| \pm (\eta + 4 \eps_n)$ by the triangle inequality.  By \lemref{near-sim} --- where the constant there is denoted here by $C' \propto D_n/r$ --- this implies that 
\[\|\phi_n(x) - \phi_n(x')\| = \|\phi_n(x^\dag) - \phi_n(x^\ddag)\| \pm C'(\eta + \eps_n)\]
when $\eta + 4 \eps_n < r/4 - 2\eps_n$, which is true when $\eps_n < r/120$ and $\eta \le r/5$.
We then apply \lemref{1nn} together with \lemref{Lip}, as for \eqref{hat-Lip}.

\medskip\noindent {CASE $d = 1$.}
This case is particularly simple. 
Note that $U$ is a bounded open interval of $\bbR$. 
We show that the function $\F$ is approximately midlinear on $U$.  
Take $x,y \in U$ and define $\mu = (x+y)/2$.  By the fact that $\F$ takes its values in $\bbR$, and \eqref{hat-near-sim}, we have
\[
\Big|\tfrac12 (\F(x) + \F(y)) - \F(\mu)\Big| = \tfrac12 \Big| |\F(x) - \F(\mu)| - |\F(y) - \F(\mu)| \Big| \le C^\star_0 \eps_n/2,
\]
when $\eps_n/r$ is small enough.
Hence, $\F$ is $(C^\star_0 \eps_n)$-approximate midlinear on $U$. 
By the result of \cite{vestfrid2003linear}, namely \lemref{near-midpoint}, there is $C \propto 1$ --- since $U$ is a ball --- and an affine function $T_n$ such that $\max_{y\in U} |\F(y) - T_n(y)| \le C C^\star_0 \eps_n$.  
Since all affine transformations from $\bbR$ to $\bbR$ are (possibly degenerate) similarities, we conclude.

\medskip\noindent {CASE $d \ge 2$.}
For the remaining of this subsection, we assume that $d \ge 2$.

\medskip\noindent {\bf Approximate midlinearity.}
We show that there is a constant $C$ such that $\F$ is locally $C \eps_n$-approximately midlinear.
Take $x,y \in B(u_\star, r/4)$, and let $\mu = (x+y)/2$.  
Let $t > 0$ be a constant to be set large enough later. 

If $\|x - y\| \le t \eps_n$, then by \eqref{hat-Lip}, $\F(x), \F(y) \in B(\F(\mu), C_0^\star(t/2+1) \eps_n)$, so that 
\[\big\|\F(\mu) -\frac12 (\F(x) + \F(y))\big\| \le C_0^\star(t/2+1) \eps_n.\]  

Therefore, assume that $\|x - y\| \ge t \eps_n$.
Let $z_1, \dots, z_d$ be constructed as in the proof of \thmref{infinite}.  By construction, both $x, z_1, \dots, z_d$ and $y, z_1, \dots, z_d$ form regular simplexes, and $\mu$ is  the barycenter of $z_1, \dots, z_d$.
By \lemref{simplex}, for any $i \ne j$, 
\[\|z_i - \mu\| = \sqrt{(d-1)/2d} \, \|z_i - z_j\| = \sqrt{(d-1)/2d} \, \sqrt{2d/(d+1)} \, \|x-\mu\| \le \|x-y\|/2,\]
which coupled with the fact that $x,y \in B(u_\star, r/4)$ yields that $z_i \in B(u_\star, r/2)$ for all $i$.
Now, let $z_0 = x$.  By \eqref{hat-near-sim}, we have $\min_{i \ne j} \|\F(z_i) - \F(z_j)\| \ge \max_{i,j} \|\F(z_i) - \F(z_j)\| - C_0^\star \eps_n$.
Let $c_d = \sqrt{d/(2d+2)}$.
By \eqref{hat-diam-bound} and \lemref{simplex}, 
\[
\|\F(z_i) - \F(z_j)\| \ge c_0^\star \|z_i - z_j\| - C_0^\star \eps_n = c_0^\star c_d \|x - y\| - C_0^\star \eps_n \ge \big(c_0^\star c_d t - C_0^\star \big) \eps_n.
\]
Hence, assuming $t \ge 2 C_0^\star/c_0^\star c_d$, we have $\min_{i \ne j} \|\F(z_i) - \F(z_j)\| \ge (1 - \eta) \max_{i,j} \|\F(z_i) - \F(z_j)\|$, where $\eta := 2C_0^\star/(c_0^\star c_d t)$.  In that case, $\hat\phi_n(x), \hat\phi_n(z_1), \dots, \hat\phi_n(z_d)$ form a $\eta$-approximate regular simplex.  By symmetry, the same is true of $\hat\phi_n(y), \hat\phi_n(z_1), \dots, \hat\phi_n(z_d)$.

Define $\lambda = \|\F(x) - \F(y)\|$.
By \lemref{simplex}, $\|z_i - z_j\| = c_d \|x - y\| < \|x -y\| - 4\eps_n$ when $t > 4/(1-c_d)$, since $\|x - y\| \ge t \eps_n$ and $c_d < 1$.
By \eqref{hat-isotonic}, this implies that $\|\F(z_i) -\F(z_j)\| \le \lambda + C_0^\star \eps_n$.   By \eqref{hat-diam-bound}, $\lambda \ge (c_0^\star t  -C_0^\star) \eps_n$, so that $\lambda + C_0^\star \eps_n \le 2\lambda$ since we already assumed that $t \ge 2 C_0^\star/c_0^\star c_d > 2 C_0^\star/c_0^\star$.

For $a \in \{x,y,\mu\}$, $\|a - z_i\|$ is constant in $i \in [d]$. 
Therefore, by \eqref{hat-near-sim}, $\min_i \|\F(a) - \F(z_i)\| \ge \max_i \|\F(a) - \F(z_i)\| - C_0^\star \eps_n$.  Define $\xi_a$ as the orthogonal projection of $\F(a)$ onto the affine space $A := \aff(\F(z_1), \dots, \F(z_d))$ and let $\delta_a = \|\F(a) - \xi_a\|$.  By the Pythagoras theorem, we have $\|\xi_a - \F(z_i)\|^2 = \|\F(a) - \F(z_i)\|^2 - \delta_a^2$.  In particular, 
\[\begin{split}
\max_i \|\xi_a - \F(z_i)\|^2 - \min_i \|\xi_a - \F(z_i)\|^2 
&= \max_i \|\F(a) - \F(z_i)\|^2 - \min_i \|\F(a) - \F(z_i)\|^2 \\
&\le 2C_0^\star \eps_n \min_i \|\F(a) - \F(z_i)\| + (C_0^\star \eps_n)^2 \le C_1 \eps_n,
\end{split}\] 
where $C_1 := 2C_0^\star D_n + {C_0^\star} r$, once $\eps_n \le r$. 
Let $\zeta$ denotes the barycenter of $\F(z_1), \dots, \F(z_d)$.
Assume that $t$ is sufficiently large that $\eta \le 1/C_2$, where $C_2 \propto 1$ is the constant of \lemref{barycenter}.  By that lemma, and the fact that $\F(z_1), \dots, \F(z_d)$ form a $\eta$-approximate regular simplex of maximum edge length bounded by $\lambda$, we have $\|\xi_a - \zeta\| \le C_2 \lambda C_1 \eps_n$.
Let $L$ be the line passing through $\zeta$ and perpendicular to $A$.  
We just proved that $\F(x), \F(y), \F(\mu)$ are within distance $C_3 \lambda \eps_n$ from $L$, where $C_3 := C_1C_2$.  

Let $\xi$ denote the orthogonal projection of $\F(\mu)$ onto $(\F(x)\F(y))$.
Since $\|x - \mu\| = \|y - \mu\|$, we can apply \eqref{hat-near-sim} to get
\[\begin{split}
& \big|\|\xi -\F(x)\|^2 - \|\xi - \F(y)\|^2\big| \\
&= \big|\|\F(\mu) - \F(x)\|^2 - \|\F(\mu) - \F(y)\|^2\big| \\
&= \big|\|\F(\mu) - \F(x)\| + \|\F(\mu) - \F(y)\|\big| \times \big|\|\F(\mu) - \F(x)\| - \|\F(\mu) - \F(y)\|\big| \\
&\le 4 C^\star_0 \lambda \eps_n,
\end{split}\]
using the fact that $\max(\|\F(\mu) - \F(x)\|, \|\F(\mu) - \F(y)\|) \le \lambda + C^\star_0 \eps_n \le 2 \lambda$, due to \eqref{hat-isotonic} and $\|x - \mu\| = \|y - \mu\| = \frac12 \|x-y\| < \|x-y\| - 4\eps_n$ when $t$ is large enough.  
By \lemref{barycenter}, we then obtain $\|\xi - \frac12 (\F(x) + \F(y)) \| \le C_4 \lambda \eps_n$ for some constant $C_4 \propto C_0^\star$.  
In particular, recalling that $\lambda = \|\F(x) - \F(y)\|$, this implies that $\xi \in [\F(x)\F(y)]$ when $\eps_n \le 1/2C_4$.
    
It remains to argue that $\F(\mu)$ is close to $\xi$.    
We already know that $\F(x), \F(y), \F(\mu)$ are within distance $C_3 \lambda \eps_n$ from $L$, and by convexity, the same must be true of $\xi$.  Let $M = (\F(x)\F(y))$ and $\theta = \angle (L, M)$.  
Let $P_M$ denote the orthogonal projection onto $M$, when $M$ is a linear subspace.
By Pythagoras theorem, 
\[\begin{split}
\lambda^2 = \|\F(x) - \F(y)\|^2 
&= \|P_L (\F(x) - \F(y))\|^2 + \|P_{L^\perp} (\F(x) - \F(y))\|^2 \\
&\le (\cos \theta)^2 \lambda^2 + (2 C_3 \lambda \eps_n)^2,
\end{split}\]
implying that $\sin \theta \le 2 C_2 \eps_n$.
Since $\|P_L - P_M\| = \sin \theta$ and $\F(\mu) - \xi$ is parallel to $M$, we also have 
\[\begin{split}
\|\F(\mu) - \xi\|^2  
&= \|P_L (\F(\mu) - \xi)\|^2 + \|P_{L^\perp} (\F(\mu) - \xi)\|^2 \\
&\le (\sin \theta)^2 \|\F(\mu) - \xi\|^2 + (2 C_3 \lambda \eps_n)^2,
\end{split}\]
so that $\|\F(\mu) - \xi\| \le 2 C_3 \lambda \eps_n / \cos \theta \le 2 C_3 \lambda \eps_n / \sqrt{1 - (2C_3 \eps_n)^2} \le C_5 \lambda \eps_n,$ 
for some constant $C_5 \propto C_3$, once $C_3 \eps_n$ is small enough.
  
We conclude that $\|\F(\mu) - \tfrac12 (\F(x) + \F(y)) \| \le (C_4 +C_5) \lambda \eps_n$, by the triangle inequality.

\medskip\noindent {\bf Approximate affinity.}
We now know that $\F$ is $C \eps_n$-approximate midlinear on $B(u_\star, r/4)$ for some constant $C \propto C_0^\star (D_n + r) \propto C_0^\star (\diam(Q) + r)$.  This implies, by the result of \cite{vestfrid2003linear}, that is \lemref{near-midpoint}, that there is an affine function $T_n$ such $\|\F(x) - T_n(x)\| \le C_1^\star \eps_n$ for all $x \in W$, for some constant $C_1^\star \propto r C \propto r C_0^\star (\diam(Q) + r)$.

\medskip\noindent {\bf Approximate similarity.}
(Reinitialize the constants $C_k, k \ge 1$.)
We saw above that $\F$ transforms the regular simplex $z_0 (= x), z_1, \dots, z_d$ with height denoted $h$ satisfying $h \ge t \eps_n/2$ into a $\eta$-approximate one, where $\eta = 2C_0^\star/(c_0^\star c_d t)$.  
In what follows, choose these points so that they are all in $B(u_\star, r/2)$ and the simplex has height $h \ge r/8$.  
(From here on, reinitialize the variables $x, y, \lambda$, etc.)
We can then take $t = r/4\eps_n$, yielding $\eta = C_1 \eps_n$ for a constant $C_1 \propto C_0^\star/(c_0^\star r)$.
By the triangle inequality, we have
\[\begin{split}
\min_{i \ne j} \|T_n(z_i) - T_n(z_j)\| 
&\ge \min_{i \ne j} \|\F(z_i) - \F(z_j)\| - 2 C_1^\star \eps_n \\
&\ge (1 - C_1 \eps_n) \max_{i \ne j} \|\F(z_i) - \F(z_j)\| - 2 C_1^\star \eps_n \\
&\ge \max_{i \ne j} \|T_n(z_i) - T_n (z_j)\| - (4 C_1^\star + C_1 \delta_n) \eps_n.
\end{split}\]
By the triangle inequality and \eqref{hat-diam-bound}, 
\[\begin{split}
\gamma_n := \max_{i,j} \|T_n(z_i) - T_n(z_j)\| 
&\ge \max_{i,j} \|\F(z_i) - \F(z_j)\| - 2 C_1^\star \eps_n \\
& \ge c_0^\star \max_{i,j} \|z_i - z_j\| - C_0^\star \eps_n - 2 C_1^\star \eps_n 
\ge c_0^\star r/8 - (C_0^\star + 2C_1^\star) \eps_n.
\end{split}\]
Hence, we find that $T_n(z_0), \dots, T_n(z_d)$ form a $C_2 \eps_n$-approximate regular simplex, where $C_2 := (4 C_1^\star + C_1 \delta_n)/(c_0^\star r/8 - (C_0^\star + 2C_1^\star) \eps_n)$.
Note that its maximum edge length is bounded as follows:
\[
\gamma_n \le \max_{i,j} \|\F(z_i) - \F(z_j)\| + 2 C_1^\star \eps_n \le \delta_n + 2 C_1^\star \eps_n \le 2 \delta_n,
\]
when $2C_1^\star \eps_n \le \delta_n$.
By \lemref{affine-near-sim}, there is a constant $C_3 > 0$ and an isometry $R^\star_n$, such that we have $\max_{x \in W} \|T_n(x) - \lambda_n R^\star_n(x)\| \le C_3 \lambda_n C_2 \eps_n$, where $\lambda_n := \gamma_n/h$.  
Because $r/8 \le h \le r$ and the bounds on $\gamma_n$ above, there is a constant $C^\star_2 \ge 1$ such that 
\beq \label{lambda}
1/C^\star_2 \le \lambda_n \le C^\star_2.
\eeq
This implies that 
\beq \label{near-sim-W}
\|\F(x) - \lambda_n R^\star_n(x)\| \le \|\F(x) - T_n(x)\| + \|T_n(x) - \lambda_n R^\star_n(x)\| \le (C^\star_1 + C_3 C_2 C^\star_2) \eps_n =: C^\star_3 \eps_n.
\eeq

\medskip\noindent {\bf Covering and conclusion.}
\def\G{{\tilde \phi_n}}
(Reinitialize the constants $C_k, k \ge 1$.)
Let $u_1 = u_\star$ and let $u_2, \dots, u_K \in U$ be such that $u_1, \dots, u_K$ form a maximal $(r/16)$-packing of $U$.  
(The number 16 is not essential here, but will play a role in the proof of \thmref{local-quad}.)
Note that $U = U_1 \cup \cdots \cup U_K$ where $U_k := U \cap B(u_k, r/4)$, and note that $U_\star := U_1 \subset U$.
For $u, u' \in U_k$, there are $w,w' \in U_\star$ such that $\|w-w'\| = \|u-u'\|$.  Define $\G = \F/\lambda_n$.  By \eqref{hat-near-sim}, and then \eqref{lambda}-\eqref{near-sim-W}, we have 
\[\begin{split}
\|\G(u) - \G(u')\| 
&= \|\G(w) - \G(w')\| \pm C^\star_0 \eps_n/\lambda_n \\
&= \|w-w'\| \pm (C^\star_0 + C^\star_3) \eps_n/C^\star_2 =: \|w -w'\| \pm C_1 \eps_n.
\end{split}\]
Let 
\beq \label{xi1}
\xi_1 = \min_k \frac{\theta(U_k)}{\diam(U_k)},
\eeq 
which is strictly positive.
The result of \cite{alestalo2001isometric}, namely \lemref{eps-isometry}, gives a constant $C_2 \propto \xi_1$ and an isometry $R_k$ such that $\max_{u \in U_k} \|\G(u) - R_k(u)\| \le C_2 \eps_n$.  

Let 
\beq \label{xi2}
\xi_2 = \frac12 \min \big\{\rho(U_k \cap U_{k'}) : U_k \cap U_{k'} \ne \emptyset\big\}.
\eeq 
Take $k, k' \in [K]$ such that $U_k \cap U_{k'} \ne \emptyset$, so that there is $u \in U$ such that $B(u, \xi_2) \subset U_k \cap U_{k'}$.  Since 
\[\begin{split}
\max_{x \in B(u, \xi_2)} \|R_k(x) - R_{k'}(x)\| 
&\le \max_{x \in U_k \cap U_k'} \|R_k(x) - \G(x)\| + \|\G(x) - R_{k'}(x)\|
\\&\le \max_{x \in U_k} \|R_k(x) - \G(x)\| + \max_{x \in U_{k'}}\|\G(x) - R_{k'}(x)\|
\le 2 C_2 \eps_n,
\end{split}\]
we have $\|R_k(x) - R_{k'}(x)\| \le (2 \|x - u\|/\xi_2 + 1) 2 C_2 \eps_n$ for all $x \in \bbR^d$, by \lemref{affine-close}.  
Hence, $\|R_k(x) - R_{k'}(x)\| \le (2 \diam(U)/\xi_2 + 1) 2 C_2 \eps_n =: C_3 \eps_n$ for all $x \in U$.
If instead $U_k \cap U_{k'} = \emptyset$, we do as follows.  Since $U$ is connected, there is a sequence $k_0 = k, k_1, \dots, k_m = k'$ in $[K]$, such that $U_{k_i} \cap U_{k_{i+1}} \ne \emptyset$.  We thus have $\max_{x \in U} \|R_{k_i}(x) - R_{k_{i+1}}(x)\| \le C_3 \eps_n$.  By the triangle inequality, we conclude that $\max_{x \in U} \|R_k(x) - R_{k'}(x)\| \le K C_3 \eps_n$ for any $k,k' \in [K]$.  
Noting that $R_1 = R_n^\star$ (since $U_1 = U_\star$), for any $k \in [K]$ and $x \in U_k$,
\[
\|\G(x) - R^\star_n(x)\| \le \|R_k(x) - R_1(x)\| + C_2 \eps_n \le (K C_3 + C_2) \eps_n.
\] 
We conclude that, for any $x \in U$,
\beq \label{last-bound}
\|\F(x) - \lambda_n R_n^\star(x)\| \le (K C_3 + C_2) \lambda_n \eps_n \le (K C_3 + C_2) C^\star_2 \eps_n =: C_4 \eps_n.
\eeq
This concludes the proof when $d \ge 2$.

\medskip\noindent {\bf A refinement of the constant.}
Assume now that $U = U^h$ for some $h > 0$.  
Tracking the constants above, we see that they all depend only on $(d, \rho(U), \diam(U), \diam(Q))$, as well as $\xi_1$ and $\xi_2$ defined in \eqref{xi1} and \eqref{xi2}, respectively.
We note that $\diam(U_k) \le r$ and $\rho(U_k) \ge \min(r/2,h)$ by \lemref{inter-vol}, so that $\xi_1 \ge \min(r/2,h)/r$.  To bound $\xi_2$, we can do as we did at the beginning of this section, so that at the end of that section, we can restrict our attention to chains $k_0, \dots, k_m$ where $\|u_{k_j} - u_{k_{j+1}}\| \le 2 r/16 = r/8$.  To be sure, fix $k,k' \in [K]$ and let $\gamma : [0,1] \mapsto U$ be a curve such that $\gamma(0) = u_k$ and $\gamma(1) = u_{k'}$.  Define $s_0 = 0$ and then $s_{j+1} = \inf\{s > s_j : \|\gamma(s) - u_{k_j}\| > r/16\}$, and let $k_{j+1} \in [K]$ be such that $\|\gamma(s_{j+1}) - u_{k_{j+1}}\| \le r/16$, which is well-defined since $(u_k, k \in [K])$ is a $(r/16)$-packing of $U$.  We then have
\[
\|u_{k_j} - u_{k_{j+1}}\| \le \|u_{k_j} - \gamma(s_{j})\| - \|\gamma(s_{j+1}) - u_{k_{j+1}}\| \le r/16 + r/16 = r/8.
\]
We can therefore redefine $\xi_2$ in \eqref{xi2} as $\frac12 \min\{\rho(U_k \cap U_{k'}) : \|u_k - u_{k'}\| \le r/8\}$.
Because $U = U^h$, for each $k \in [K]$, there is $v_k$ such that $u_k \in B(v_k, \min(r/16,h)) \subset U$.  By the triangle inequality, $B(v_k, \min(r/16,h)) \subset U_{k'}$ when $\|u_k - u_{k'}\| \le r/8$, so that $\xi_2 \ge \min(r/16, h)$.
So we see that everything depends on $(d, h, \rho(U), \diam(U), \diam(Q))$.  The second part of the theorem now follows by invariance considerations.


\subsection{Proof of \lemref{graph}}

Let $c = \essinf_U f$ and $C = \esssup_U f$, which by assumption belong to $(0,\infty)$.
Fix $i \in [n]$ and let $N_i = \# \{j \ne i : \|x_j -x_i\| \le r\}$.  For $j \ne i$, $p_{i}(j) := \P(\|x_j -x_i\| \le r) = \int_{B(x_i, r)} f(u) {\rm d}u$.
For an upper bound, we have
\[
p_{i}(j) \le C \vol(B(x_i, r) \cap U) \le C \vol(B(x_i, r)) = C \zeta_d r^d =: Q,
\]
where $\vol$ denotes the Lebesgue measure in $\bbR^d$ and $\zeta_d$ is the volume of the unit ball in $\bbR^d$.  
Hence, $\P(N_i > 2 (n-1) Q) \le \P(\Bin(n-1, Q) > 2 (n-1) Q) \le e^{- (n-1) Q/3}$ by Bennett's inequality for the binomial distribution.  By the union bound, we conclude that $\max_i N_i \le 2 (n-1) Q$ with probability at least $1 - n e^{- (n-1) Q/3}$, which tends to 1 if $n r^d \ge C_0 \log n$ and $C_0 > 0$ is sufficiently large.

For a lower bound, we use the following lemma.
\begin{lem} \label{lem:inter-vol}
Suppose $U \subset \bbR^d$ is open and such that $U = U^h$ for some $h > 0$.  Then for any $x \in U$ and any $r > 0$, $B(x, r) \cap U$ contains a ball of radius $\min(r,h)/2$.  Moreover, the closure of that ball contains $x$.
\end{lem}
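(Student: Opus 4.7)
The plan is to unpack the definition of $U = U^h$ directly and produce the inscribed ball by an explicit translation argument. By hypothesis, for the given $x \in U$ there exists $y \in U$ with $x \in B(y, h) \subset U$. Setting $\rho := \min(r, h)/2$, the idea is to nudge $x$ by a distance $\rho$ in the direction of $y$ to get a center $z$ for which $B(z, \rho)$ fits inside both $B(y, h)$ and $B(x, r)$, while still touching $x$ on its boundary.

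Concretely, I would let $u = (y - x)/\|y - x\|$ if $x \neq y$ (and pick $u$ to be any unit vector otherwise) and define $z = x + \rho u$. The statement $x \in \overline{B(z, \rho)}$ is then immediate from $\|z - x\| = \rho$. For the inclusion $B(z, \rho) \subset B(x, r)$, the triangle inequality gives $\|w - x\| \le \|w - z\| + \|z - x\| < 2\rho \le r$ for any $w \in B(z, \rho)$. For the inclusion $B(z, \rho) \subset U$, it suffices to verify $B(z, \rho) \subset B(y, h)$, i.e., $\|z - y\| + \rho \le h$. Writing $s = \|x - y\| < h$, a direct computation shows $\|z - y\| = |s - \rho|$, so $\|z - y\| + \rho$ equals $s$ when $s \ge \rho$ (which is $< h$) and equals $2\rho - s \le 2\rho \le h$ when $s < \rho$; either way the required inequality holds.

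There is no genuine obstacle here: the construction is entirely elementary, and the proof amounts to observing that the witness ball $B(y, h)$ supplied by the definition of $U^h$ automatically admits a sub-ball of radius $\min(r, h)/2$ whose closure contains $x$ and which lies inside $B(x, r)$. The only mildly delicate point is handling the degenerate case $s < \rho$ (where $z$ may sit on the far side of $y$ from $x$), but the case split above resolves it cleanly.
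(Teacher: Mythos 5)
Your proof is correct and follows essentially the same route as the paper's: both take the witness ball $B(y,h)$ from the definition of $U=U^h$ and exhibit a ball of radius $\min(r,h)/2$ inside $B(x,r)\cap B(y,h)$ whose closure contains $x$, obtained by shifting the center from $x$ toward $y$. The only cosmetic difference is that the paper first assumes $r\le h$ and splits on whether $\|x-y\|\le r/2$, while you use a single shifted center and handle the near-degenerate case $\|x-y\|<\rho$ inside the verification.
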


\begin{proof}
By definition, there is $y \in U$ such that $x \in B(y,h) \subset U$.  We then have $B(x, r) \cap U \supset B(x,r) \cap B(y,h)$, so it suffices to show that the latter contains a ball of radius $\min(r,h)/2$.  By symmetry, we may assume that $r \le h$.  
If $\|x - y\| \le r/2$, then $B(x, r/2) \subset B(y,h)$ and we are done.
Otherwise, let $z = (1-t) x + t y$ with $t := r/2 \|x - y\| \in (0,1)$, and note that $B(z, r/2) \subset B(x,r) \cap B(y,h)$ and $x \in \partial B(z, r/2)$.
\end{proof}

Now that \lemref{inter-vol} is established, we apply it to get
\[
p_{i}(j) \ge c \vol(B(x_i, r) \cap U) \ge c \zeta_d (\min(r,h)/2)^d =: q.
\]
Hence, $\P(N_i < (n-1) q/2) \le \P(\Bin(n-1, q) < (n-1) q/2) \le e^{- (6/7)(n-1)q}$.  By the union bound, we conclude that $\min_i N_i \ge (n-1)q/2$ with probability at least $1 - n e^{- (6/7)(n-1)q}$, which tends to 1 if $n r^d \ge C_1 \log n$ and $C_1 > 0$ is sufficiently large.  (Recall that $h$ is fixed.)

\subsection{More auxiliary results}

We list here a few additional of auxiliary results that will be used in the proof of \thmref{local-quad}.

For $V \subset \bbR^d$ and $x,x' \in V$, define the intrinsic metric 
\[\delta_V(x,x') = \sup \Big\{L : \exists \gamma : [0,L] \mapsto V, \text{ 1-Lipschitz, with } \gamma(0) = x, \gamma(L) = x'\Big\},\]
where $\gamma$ is 1-Lipschitz if $\|\gamma(s) - \gamma(t)\| \le |s-t|$ for all $s,t \in [0,L]$.
If no such curve exists, set $\delta_V(x,x') = \infty$.  The intrinsic diameter of $V$ is defined as $\sup\{\delta_V(x,x') : x,x' \in V\}$.
We note that, if $L := \delta_V(x,x') < \infty$, then there is a curve $\gamma \subset \bar V$ with length $L$ joining $x$ and $x'$.
Recall that a curve with finite length is said to be rectifiable.
See \citep{burago2001course} for a detailed account of intrinsic metrics.

For $U \subset \bbR^d$ and $h > 0$, let $U^{\ominus h} = \{x \in U : B(x,h) \subset U\}$.  This is referred to as an erosion (of the set $U$) in mathematical morphology.

\begin{lem} \label{lem:open-intrinsic}
If $U \subset \bbR^d$ is open and connected, then for each pair of points $x,x' \in U$, there is $h > 0$ and a rectifiable curve within $U^{\ominus h}$ joining $x$ and $x'$.  
\end{lem}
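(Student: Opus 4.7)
The plan is to reduce to a piecewise linear approximation of an arbitrary continuous path in $U$. Since $U \subset \bbR^d$ is open and connected, it is path-connected (a standard fact for open subsets of $\bbR^d$), so there exists a continuous curve $\gamma_0 : [0,1] \mapsto U$ with $\gamma_0(0) = x$ and $\gamma_0(1) = x'$. The image $K := \gamma_0([0,1])$ is a compact subset of the open set $U$; therefore $h_0 := \inf\{\|y - z\| : y \in K, \, z \in \bbR^d \setminus U\}$ is strictly positive, which is exactly the statement that $K \subset U^{\ominus h_0}$.

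Next, by uniform continuity of $\gamma_0$ on $[0,1]$, I choose a partition $0 = t_0 < t_1 < \cdots < t_N = 1$ fine enough that $\|\gamma_0(t_{i+1}) - \gamma_0(t_i)\| < h_0/2$ for every $i$. Let $\gamma_1$ be the piecewise linear curve obtained by joining $\gamma_0(t_0), \gamma_0(t_1), \dots, \gamma_0(t_N)$ in order by straight segments. It connects $x$ to $x'$, and its total length is bounded by $N h_0/2 < \infty$, so it is rectifiable.

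It remains to verify that $\gamma_1$ stays inside $U^{\ominus h}$ for a suitable $h > 0$. For any point $z$ on the segment $[\gamma_0(t_i), \gamma_0(t_{i+1})]$, one has $\|z - \gamma_0(t_i)\| \le \|\gamma_0(t_{i+1}) - \gamma_0(t_i)\| < h_0/2$. Since $\gamma_0(t_i) \in U^{\ominus h_0}$, we have $B(\gamma_0(t_i), h_0) \subset U$, and the triangle inequality then yields $B(z, h_0/2) \subset B(\gamma_0(t_i), h_0) \subset U$, i.e., $z \in U^{\ominus h_0/2}$. Setting $h := h_0/2$ completes the argument. There is no substantive obstacle beyond this routine triangle-inequality bookkeeping: the lemma is essentially a quantitative restatement of path-connectedness, with the erosion parameter produced by the compactness of the image and a uniform-continuity argument.
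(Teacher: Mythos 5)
Your proof is correct and takes essentially the same route as the paper: use path-connectedness of the open connected set $U$, exploit compactness of the curve's image, and replace the continuous path by a finite polygonal line that stays in an erosion $U^{\ominus h}$. The only difference is organizational --- you work with the single uniform radius given by the distance from the compact image $\gamma_0([0,1])$ to $\bbR^d \setminus U$ plus uniform continuity, while the paper extracts a finite subcover by balls of point-dependent radii --- and this is a cosmetic variation, not a different argument.
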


\begin{proof}
Take $x,x' \in U$.  By taking an intersection with an open ball that contains $x,x'$, if needed, we may assume without loss of generality that $U$ is bounded.
Since every connected open set in a Euclidean space is also path-connected \citep[Example~2.5.13]{waldmann2014topology}, there is a continuous curve $\gamma : [0,1] \mapsto U$ such that $\gamma(0) = x$ and $\gamma(1) = x'$.  A priori, $\gamma$ could have infinite length.  However, $\gamma$ ($\equiv \gamma([0,1])$) is compact.  For each $t \in [0,1]$, let $r(t) > 0$ be such that $B_t := B(\gamma(t), r(t)) \subset U$.  Since $\gamma \subset \bigcup_{t \in [0,1]} B_t$, there is $0 \le t_1 < \dots < t_m \le 1$ such that $\gamma \subset \bigcup_{j \in [m]} B_{t_j}$.  Since $\gamma$ is connected, necessarily, for all $j \in [m-1]$ there is $s_j \in [t_j, t_{j+1}]$ such that $\gamma(s_j) \in B_{t_j} \cap B_{t_{j+1}}$.  Let $s_0 = 0$ and $s_m = 1$.  Then $[\gamma(s_j) \gamma(s_{j+1})] \subset B_{t_{j+1}} \subset U$ for all $j \in 0, \dots, m-1$, and therefore the polygonal line defined by $x = \gamma(s_0), \gamma(s_1), \dots, \gamma(s_{m-1}), \gamma(s_m) = x'$ is inside $\bigcup_{j \in [m]} B_{t_j} \subset U^{\ominus r}$ where $r := \min_{j\in [m]} r(t_j) > 0$.  By construction, this polygonal line joins $x$ and $x'$, and is also rectifiable since it has a finite number of vertices.
\end{proof}

\begin{lem} \label{lem:diam-finite}
Suppose $U \subset \bbR^d$ is bounded, connected, and such that $U = U^h$ for some $h > 0$.  Then there is $h_\ddag > 0$ such that, for all $h' \in [0, h_\ddag]$, the intrinsic diameter of $U^{\ominus h'}$ is finite.
\end{lem}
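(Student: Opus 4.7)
My plan is to produce a finite skeleton inside $U^{\ominus h}$, rectifiably interconnected, and then show that every point of $U^{\ominus h'}$ (for $h' \le h_\ddag$) lies within bounded intrinsic distance of the skeleton.  Since $U$ is bounded and $U^{\ominus h} = \{y \in U : B(y, h) \subset U\}$ is the super-level set of the continuous $1$-Lipschitz function $y \mapsto \inf_{q \in U^\comp} \|y - q\|$, it is closed and bounded, hence compact; it is nonempty because $U = U^h$.  I extract a finite $\epsilon$-net $\{y_1, \ldots, y_K\} \subset U^{\ominus h}$ for some $\epsilon > 0$ to be fixed small.

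For each of the finitely many pairs $(y_k, y_j)$, Lemma~\ref{lem:open-intrinsic} supplies a positive $h_{kj}$ and a rectifiable curve of finite length $L_{kj}$ inside $U^{\ominus h_{kj}}$ joining $y_k$ to $y_j$.  Setting $h_\star := \min_{k,j} h_{kj} > 0$, $L_\star := \max_{k,j} L_{kj} < \infty$, and $h_\ddag := \min(h_\star, h/2)$, I obtain that for every $h' \in [0, h_\ddag]$ all the skeleton curves lie in $U^{\ominus h'}$, so any two skeleton points are joined in $U^{\ominus h'}$ by a rectifiable curve of length at most $L_\star$.

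It remains to connect an arbitrary $x \in U^{\ominus h'}$ to the skeleton via a rectifiable curve in $U^{\ominus h'}$ of bounded length.  The hypothesis $U = U^h$ furnishes a ``deep'' center $c \in U^{\ominus h}$ with $\|x - c\| < h$, and the $\epsilon$-net puts some $y_k$ within $\epsilon$ of $c$; the straight segment $[c, y_k]$ then lies in $U^{\ominus(h - \epsilon/2)} \subset U^{\ominus h'}$ by the $1$-Lipschitz bound applied at both endpoints.

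The main obstacle is the last leg, connecting $x$ to $c$ inside $U^{\ominus h'}$: the straight segment $[x, c]$ has distance-to-boundary bounded below only by $(h' + h - \|x-c\|)/2$, which may fall below $h'$ when $\|x - c\|$ is close to $h$.  I would resolve this by replacing $[x, c]$ with a two-stage path: first move from $x$ along a gradient-ascent path for the distance-to-boundary function, reaching a point $\tilde x \in U^{\ominus h/2}$ via a curve of length bounded by a constant depending only on $(U, h)$; then traverse the straight segment $[\tilde x, c]$, which lies in $U^{\ominus h/2} \subset U^{\ominus h'}$ by the $1$-Lipschitz bound.  Making the gradient-ascent step rigorous at every $x$ (in particular where the nearest boundary point is not unique, so the ``direction'' is not well-defined) is the technical heart of the argument, and may force $h_\ddag$ to be taken a bit smaller than $h/2$.
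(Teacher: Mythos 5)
Your skeleton stage is sound and is essentially the paper's own construction: compactness of $U^{\ominus h}$, finitely many base points in it, pairwise rectifiable curves supplied by \lemref{open-intrinsic} that fix $h_\ddag$ and a uniform length bound, and the short segment $[c\,y_k]$, which indeed stays at depth at least $h-\epsilon/2$. (Using an $\epsilon$-net and joining all pairs, instead of the paper's one representative per connected component of $U^{\ominus h}$, is a harmless variation.) The genuine gap is the last leg, joining an arbitrary $x\in U^{\ominus h'}$ to the skeleton, and your proposed repair does not close it. First, the gradient-ascent step is not an argument: the distance-to-boundary function is only $1$-Lipschitz, has no well-defined gradient, and --- decisively --- you give no reason why an ascent curve from $x$ up to depth $h/2$ should have length bounded by a constant depending only on $(U,h)$, uniformly in $x$. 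A uniform statement of exactly this kind (``every point of $U^{\ominus h'}$ reaches the deep set by a bounded-length path inside $U^{\ominus h'}$'') is the core content of the lemma, so labelling it ``the technical heart'' and deferring it leaves the proof without its central step. Second, even granting that step, the claim that $[\tilde x, c]$ lies in $U^{\ominus h/2}$ ``by the $1$-Lipschitz bound'' is false: that bound only protects points within distance $h/2$ of $\tilde x$ and within $h$ of $c$, and nothing controls $\|\tilde x - c\|$; for a non-convex $U$ with $U=U^h$ (e.g.\ a tubular neighbourhood of a circle), a chord between two deep points can leave $U$ altogether. So the final stage both assumes the crux and then relies on an incorrect step.

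For contrast, the paper never climbs from depth $h'$ to depth $h/2$ at all. It attaches $x$ to the deep set by the single straight segment $[x\,y]$ of length less than $h$, where $y\in U^{\ominus h}$ is a centre with $x\in B(y,h)\subset U$ (this is where $U=U^h$ enters), and it confines all long-range travel to $U^{\ominus h/2}$: each connected component of $U^{\ominus h}$ (finitely many, by a volume argument) is covered by finitely many balls $B(\cdot,h/2)\subset U^{\ominus h/2}$ and traversed by chaining through them, while distinct components are joined by the curves of \lemref{open-intrinsic}, which is what determines $h_\ddag$; the concatenated curve $[x\,y]\cup\gamma\cup\gamma_{k,k'}\cup\gamma'\cup[y'\,x']$ then has length bounded independently of $x,x'$. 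If you want to keep your net-based variant, replace ascent-plus-chord by exactly this: a short attaching segment from $x$ to a centre of an $h$-ball containing it, then chaining inside $U^{\ominus h/2}$ to a net point and along the skeleton. Note that the delicate point you correctly identified --- why the short attaching segment does not dip below depth $h'$, given that the two endpoint Lipschitz bounds alone only guarantee depth about $(h'+h-\|x-y\|)/2$ --- must then be confronted directly (the paper asserts the containment for this segment); your detour does not resolve it, it only trades it for an unproven ascent step and an uncontrolled chord.
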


\begin{proof}
Let $V = U^{\ominus h}$.
By assumption, for all $x \in U$, there is $y \in V$ such that $x \in B(y,h) \subset U$.  In particular, $U \supset V \ne \emptyset$.  

Let $V_1$ be a connected component of $V$.  
Pick $y_1 \in V_1$ and note that $B_1 := B(y_1, h) \subset U$ by definition, and also $B_1 \subset V_1$ because $B_1$ is connected.  
Let $\zeta_d$ be the volume of the unit ball in $\bbR^d$.
Since the connected components are disjoint and each has volume at least $\zeta_d h^d$ while $U$ has volume at most $\zeta_d (\diam(U)/2)^d$, $V$ can have at most $\lceil (\diam(U)/2h)^d \rceil$ connected components, which we now denote by $V_1, \dots, V_K$.  
Pick $y_k \in V_k$ for each $k \in [K]$.
Applying \lemref{open-intrinsic}, for each pair of distinct $k, k' \in [K]$, there is a rectifiable (i.e., finite-length) path $\gamma_{k,k'} \subset U$ joining $y_k$ and $y_{k'}$.  By \lemref{open-intrinsic}, the length of $\gamma_{k,k'}$, denoted $D_{k,k'}$, is finite, and there is $h_{k,k'} > 0$ such that $\gamma_{k,k'} \subset U^{\ominus h_{k,k'}}$.  Let $D_\ddag = \max_{k,k' \in [K]} D_{k,k'}$ and $h_\ddag = \min_{k,k' \in [K]} h_{k,k'}$.

We now show that each connected component $V_k$ has finite diameter in the intrinsic metric of $V' := U^{\ominus h/2}$.  
Since $V_k$ is bounded, there is $x_1, \dots, x_m \in V_k$ such that $V_k \subset \bigcup_{j \in [m_k]} Q_j$, where $Q_j := B(x_j, h/2) \subset V'$.
Take any $x,x' \in V_k$.  Let $j,j' \in [m_k]$ be such that $x \in Q_j$ and $x' \in Q_{j'}$.
Since $V_k$ is connected, there is a sequence $j = j_0, j_1, \dots, j_{S_k} = j' \in [m_k]$ such that $Q_{j_s} \cap Q_{j_{s+1}} \ne \emptyset$ for all $s = 0, \dots, S_k$.  
Choose $z_s \in Q_{j_s} \cap Q_{j_{s+1}}$ and let $z_0 = x$ and $z_{S_k} = x'$.  Then $[z_s z_{s+1}] \subset Q_{j_{s+1}}$ for all $s$.  Let $L$ be the polygonal line formed by $z_0, \dots, z_{S_k}$.  By construction, $L \subset \bigcup_{s=0}^{S_k} Q_{j_s} \subset V'$, it joins $x$ and $x'$, and  has length at most $(S_k+1) 2h$.  Hence, $\delta_{V'}(x,x') \le (S_k+1) 2h \le 2 (m_k+1) h$.  This being valid for all $x,x' \in V_k$, we proved that $V_k$ has diameter at most $D_k := 2(m_k+1)h$ in the intrinsic metric of $V'$. 
Let $D_\star = \max_{k \in [K]} D_k$.

Now take $h_\dag \in [0,h_\ddag]$ and any $x,x' \in U^{\ominus h_\dag}$.
Let $y,y' \in V$ be such that $x \in B(y,h)$ and $x' \in B(y',h)$.
Let $k,k' \in [K]$ be such that $y \in V_k$ and $y' \in V_{k'}$.
There are curves $\gamma, \gamma' \subset V'$ of length at most $D_\star$ such that $\gamma$ joins $y$ and $y_k$, while $\gamma'$ joins $y'$ and $y_{k'}$.  We then join $y_k$ and $y_{k'}$ with $\gamma_{k,k'}$
All together, we have the curve $[xy] \cup \gamma \cup \gamma_{k,k'} \cup \gamma' \cup [y'x']$, which joins $x$ and $x'$, lies entirely in $U^{\ominus h_\dag}$, and has length bounded by $h + D_\star + D_\ddag + D_\star + h =: D$.
And this is true for any pair of such points. 
\end{proof}

\begin{lem} \label{lem:affine-close-discrete}
Suppose that $S_1, S_2 : \bbR^d \mapsto \bbR^d$ are two affinities such that $\max_j \|S_1(z_j) - S_2(z_j)\| \le \eps$, where $z_0, \dots, z_d$ form in a $\eta$-approximate regular simplex with minimum edge length at least $\lambda$.  There is $C>0$ depending only on $d$ such that, if $\eta \le 1/C$, then  
$\|S_1(x) - S_2(x)\| \le C \eps \|x - z_0\|/\lambda + \eps$ for all $x \in \bbR^d$. 
\end{lem}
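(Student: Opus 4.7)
The strategy mirrors that of \lemref{affine-close}, but with a discrete reference set in place of a ball. The key observation is that once we translate so $z_0$ becomes the origin, the vectors $z_1 - z_0, \dots, z_d - z_0$ form a basis of $\bbR^d$ whose condition number is controlled in terms of $d$ alone, so that any affine map which is small at $z_0, \dots, z_d$ must be small everywhere, with a quantitative bound that deteriorates linearly in $\|x - z_0\|/\lambda$.

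First I would linearize. Set $T(x) = (S_1 - S_2)(x + z_0) - (S_1 - S_2)(z_0)$. Since $S_1 - S_2$ is affine, $T$ is linear, and the hypothesis gives $\|T(z_j - z_0)\| = \|(S_1(z_j) - S_2(z_j)) - (S_1(z_0) - S_2(z_0))\| \le 2\eps$ for all $j \in \{0, 1, \dots, d\}$. Letting $W = [z_1 - z_0 \cdots z_d - z_0] \in \bbR^{d \times d}$, this gives $\|TW\| \le \|TW\|_F \le 2\sqrt{d}\,\eps$.

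Next I would invoke \lemref{simplex-value} applied to the shifted simplex $0, z_1 - z_0, \dots, z_d - z_0$, which is again an $\eta$-approximate regular simplex with the same edge lengths as the original (so with maximum edge length at least $\lambda$). That lemma yields
\[
\sigma_d([0\ z_1 - z_0 \cdots z_d - z_0]) = \sigma_d(W) \ge c_d \lambda,
\]
for a dimensional constant $c_d > 0$, provided $\eta$ is below a dimensional threshold (which is what fixes the constant $C$ of the statement). Hence for any $x \in \bbR^d$, writing $x - z_0 = W\alpha$ with $\alpha \in \bbR^d$, we have $\|\alpha\| \le \|x - z_0\|/(c_d \lambda)$ and therefore
\[
\|T(x - z_0)\| = \|T W \alpha\| \le \|TW\|\,\|\alpha\| \le (2\sqrt{d}/c_d)\,\eps\,\|x - z_0\|/\lambda.
\]
The triangle inequality then yields
\[
\|S_1(x) - S_2(x)\| \le \|T(x - z_0)\| + \|S_1(z_0) - S_2(z_0)\| \le C\eps\,\|x - z_0\|/\lambda + \eps,
\]
for a suitable $C$ depending only on $d$, after enlarging $C$ if needed so that the $\eta \le 1/C$ condition is consistent with the threshold required by \lemref{simplex-value}.

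There is no substantial obstacle here: the argument is essentially a perturbed inverse-function calculation, and the only nontrivial ingredient is the uniform lower bound $\sigma_d(W) \ge c_d \lambda$, which is precisely what \lemref{simplex-value} supplies.
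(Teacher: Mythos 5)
Your argument is correct and follows essentially the same route as the paper's proof: linearize the difference $S_1 - S_2$ around $z_0$, bound the operator norm via the Frobenius norm on the simplex directions, and lower-bound $\sigma_d$ of the simplex matrix using \lemref{simplex-value} (the paper additionally normalizes $z_0=0$, $\lambda=1$ by invariance, which you instead carry explicitly). The only cosmetic difference is that the paper bounds $\|L_1-L_2\|$ as an operator and then evaluates at $x$, while you evaluate directly via $x-z_0 = W\alpha$; these are equivalent.
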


\begin{proof}
Note that this is closely related to \lemref{affine-close}.
By translation and scale invariance, assume that $z_0 = 0$ and $\lambda = 1$.
Let $L_i = S_i - S_i(0)$.  We have $\|L_1(z_j) - L_2(z_j)\| \le \|S_1(z_j) - S_2(z_j)\| + \|S_1(0) - S_2(0)\| \le 2\eps$.  Let $Z$ denote the matrix with columns $z_1, \dots, z_d$.  In matrix notation, we have 
\[
\|(L_1 - L_2) Z\|_F = \sqrt{\sum_j \|(L_1 - L_2) z_j\|^2} \le 2 \sqrt{d} \eps.
\]
We also have $\|(L_1 - L_2) Z\|_F \ge \|(L_1 - L_2) Z\| \ge \sigma_d(Z) \|L_1 - L_2\|$, and by \lemref{simplex-value}, $\sigma_d(Z) = \sigma_d([z_0, Z]) \ge 1/C_1$ when $\eta \le 1/C_1$, where $C_1$ depends only on $d$.  
In that case, $\|L_1 - L_2\| \le C_2 \eps$ for another constant $C_2$.  Equivalently, for $x \in \bbR^d$, $\|L_1(x) - L_2(x)\| \le C_2 \eps \|x\|$, which in turn implies that $\|S_1(x) - S_2(x)\| \le \|L_1(x) - L_2(x)\| + \|S_1(0) - S_2(0)\| \le C_2 \eps \|x\| + \eps$.
\end{proof}

\subsection{Proof of \thmref{local-quad}}

Because $\phi_n$ is bounded independently of $n$, we may assume without loss of generality that $C_0 \eps_n \le r_n$ and $C_0 r_n \le h$ for all $n$, where $C_0 \ge 1$ will be chosen large enough later on.

Take $y \in U$ and let $\Omega_y = \Omega_n \cap B(y, r_n)$ and $Q_y = \phi_n(\Omega_y)$.
We first show that there is $C_1 \propto \diam(Q)/\rho(U)$ such that, for any $y \in U$, $\diam(Q_y) \le C_1 r_n$.
For this, we mimic the proof of \lemref{Lip}.  
Take $x,x' \in \Omega_y$ such that $\xi := \|\phi_n(x) - \phi_n(x')\| = \diam(Q_y)$.
Let $u$ be such that $B(u, \rho(U)) \subset U$.  Let $y_1, \dots, y_m$ be an $(r_n + 2 \eps_n)$-packing of $B(u, \rho(U))$ with $m \ge A_1 (\rho(U)/r_n)^{d}$ for some $A_1 \propto 1$.  Then let $\{x_{i_s} : s \in [m]\} \subset \Omega_n$ be such that $\max_{s \in[m]} \|y_s - x_{i_s}\| \le \eps_n$.  By the triangle inequality, for all $s \ne t$, we have $\|x_{i_s} - x_{i_t}\| \ge \|y_s - y_t\| - 2\eps_n \ge r_n > \|x - x'\|$.  By \eqref{outside}, we have $\|\phi_n(x_{i_s}) - \phi_n(x_{i_t})\| \ge \xi$, so that $\phi_n(x_{i_1}), \dots, \phi_n(x_{i_m})$ form a $\xi$-packing.
Therefore $m \le A_2 (\diam(Q)/\xi)^{d}$ for some $A_2 \propto 1$.  We conclude that $\xi \le (A_2/A_1)^{1/d} (\diam(Q)/\rho(U)) r_n =: C_1 r_n$.

We apply \thmref{rates} to $U_y := B(y,r_n)$ and $\Omega_y$.  With the fact that $\delta_H(\Omega_y, U_y) \le 2\eps_n$ --- as we saw in the proof of \eqref{hat-diam-bound} --- and invariance considerations, we obtain a constant $C \propto 1$ and a similarity $S_y$ such that $\max_{x \in \Omega_y} \|\phi_n(x) - S_y(x)\| \le C (\diam(Q_y)/r_n) \eps_n \le C C_1 \eps_n =: C_2 \eps_n$.
(Note that all the quantities with subscript $y$ depend also on $n$, but this will be left implicit.)

Fix $y_\star \in U^{\ominus r_n}$.
For $x \in \Omega_n$, there is $y \in U^{\ominus r_n}$ such that $x \in U_{y}$.  Assume $\gamma$ is parameterized by arc length and let $h_\ddag$ be given by \lemref{diam-finite} and let $D$ denote the intrinsic diameter of $U^{\ominus h_\ddag}$.  Then assuming $r_n \le h_\ddag$, there is a curve $\gamma \subset U^{\ominus r_n}$ of length $L \le D$ joining $y_\star$ and $y$.  
Let $y_0 = y_\star$, $y_j = \gamma(j r_n)$ for $j = 0, \dots, J := \lfloor L/r_n \rfloor$, and then $y_{J+1} = y$.  
We have $\max_{z \in U_{y_j} \cap U_{y_{j+1}}} \|S_{y_j}(z) - S_{y_{j+1}}(z)\| \le 2 C_2 \eps_n$ by the triangle inequality.
We also have $\rho(U_{y_j} \cap U_{y_{j+1}}) \ge r_n$, because $\|y_j - y_{j+1}\| \le r_n$.
Let $v_j$ be such that $B(v_j, r_n/2) \subset U_{y_j} \cap U_{y_{j+1}}$.
Fix $j$ and let $v_{j,0}, \dots, v_{j,d}$ denote a regular simplex inscribed in the ball $B(v_j, r_n/4)$.  Let $\lambda_n \propto r_n$ denote its edge length.   
Then let $x_{j,0}, \dots, x_{s,d} \in \Omega_n$ be such that $\max_k \|x_{j,k} - v_{j,k}\| \le \eps_n$.  When $C_0$ is large enough, $x_{j,0}, \dots, x_{j,d} \in B(v_j, r_n/2)$ by the triangle inequality.  Moreover, $\max_{k,l} \|x_{j,k} - x_{j,l}\| \le \lambda_n + 2 \eps_n$, as well as $\min_{k \ne l} \|x_{j,k} - x_{j,l}\| \ge \lambda_n - 2 \eps_n$.  When $C_0$ is large enough, $F_j := \{x_{j,0}, \dots, x_{j,d}\}$ is therefore an $\eta$-approximate regular simplex, with $\eta \propto \eps_n/r_n$, and minimum edge length $\propto r_n$.
Now, since $\max_{k} \|S_{y_j}(x_{j,k}) - S_{y_{j+1}}(x_{j,k})\| \le 2 C_2 \eps_n$, by \lemref{affine-close-discrete}, for all $z \in \bbR^d$, $\|S_{y_j}(z) - S_{y_{j+1}}(z)\| \le C C_2 \eps_n \|z - x_{j,0}\|/r_n + 2 C_2 \eps_n$ for some $C \propto 1$, assuming $\eps_n/r_n \le 1/C$.  
In particular, by the fact that $\|x - x_{j,0}\| \le \diam(U)$, this gives $\|S_{y_j}(x) - S_{y_{j+1}}(x)\| \le C_3 \eps_n/r_n$ for some $C_3 \propto \diam(U) C_2$.
Hence,
\[
\|S_{y_\star}(x) - S_y(x)\| \le (J+1) C_3 \eps_n/r_n \le C_4 \eps_n/r_n^2,
\]
since $J \le L/r_n \le D/r_n$.  

This being true for any arbitrary $x \in \Omega_n$, we conclude that 
\[\max_{x \in \Omega_n} \|\phi_n(x) - S_{y_\star}(x)\| \le C_4 \eps_n/r_n^2 + C_2 \eps_n \le C_5 \eps_n/r_n^2.\]

\section{Discussion} \label{sec:discussion}

This paper builds on \citep{klein} to provide some theory for ordinal embedding, an important problem in multivariate statistics (aka unsupervised learning).
We leave open two main problems: \\[-.3in]
\bitem
\item What are the optimal rates of convergence for ordinal embedding with all triple and quadruple comparisons? \\[-.25in]
\item What is the minimum size of $K = K_n$ for consistency of ordinal embedding based on the $K$-nearest neighbor distance comparisons?
\eitem

We note that we only studied the large sample behavior of exact embedding methods.  In particular, we did not discuss or proposed any methodology for producing such an embedding.  For this, we refer the reader to \citep{agarwal2007generalized,terada2014local,borg2005modern} and references therein.
In fact, the practice of ordinal embedding raises a number of other questions in terms of theory, for instance: \\[-.23in]
\bitem 
\item How many flawed comparisons can be tolerated?  
\eitem

\subsection*{Acknowledgements}
We are grateful to Vicente Malave for introducing us to the topic and for reading a draft of this paper.  
We also want to thank an associate editor and two anonymous referees for pertinent comments, and for pointing out some typos and errors.  
We learned of the work of Ulrike von Luxburg and her collaborators at the Mathematical Foundations of Learning Theory Workshop held in Barcelona in June 2014.  We are grateful to the organizers, in particular G\'abor Lugosi, for the invitation to participate.
This work was partially supported by the US Office of Naval Research (N00014-13-1-0257).

\bibliographystyle{chicago}
\bibliography{ordinal}

\end{document}